\documentclass[a4paper, 10pt]{article}

\usepackage[english]{babel}
\usepackage[utf8]{inputenc}

\usepackage[T1]{fontenc}
\usepackage{lmodern}

\usepackage{amsmath, amssymb, amsfonts, mathtools, amsthm}
\usepackage{url}
\usepackage{authblk}

\usepackage{tikz}
\usetikzlibrary{arrows}
\usetikzlibrary{patterns}
\usetikzlibrary{positioning}
\usetikzlibrary{shapes.misc}

\newtheorem{proposition}{Proposition}
\newtheorem{lemma}{Lemma}

\providecommand{\abs}[1]{\lvert#1\rvert}
\providecommand{\norm}[1]{\lVert#1\rVert}

\newcommand{\R}{\mathbb{R}}

\newcommand{\x}{\mathbf{x}}
\newcommand{\y}{\mathbf{y}}

\begin{document}

\title{Optical Flow on Evolving Sphere-Like Surfaces}

\author[1]{Lukas F. Lang}
\author[1,2]{Otmar Scherzer}
\affil[1]{\footnotesize Computational Science Center, University of Vienna, Oskar-Morgenstern-Platz\ 1, 1090 Vienna, Austria}
\affil[2]{Radon Institute of Computational and Applied Mathematics, Austrian Academy of Sciences, Altenberger Str.\ 69, 4040 Linz, Austria}

\date{}
\maketitle

\begin{abstract}
\noindent
In this work we consider optical flow on evolving Riemannian 2-manifolds which can be parametrised from the 2-sphere.
Our main motivation is to estimate cell motion in time-lapse volumetric microscopy images depicting fluorescently labelled cells of a live zebrafish embryo.
We exploit the fact that the recorded cells float on the surface of the embryo and allow for the extraction of an image sequence together with a sphere-like surface.
We solve the resulting variational problem by means of a Galerkin method based on vector spherical harmonics and present numerical results computed from the aforementioned microscopy data.
\end{abstract}

\section{Introduction}

Motion estimation is a fundamental problem in image analysis and computer vision.
An important task within is optical flow computation.
It is concerned with the inference of a vector field describing the displacements of brightness patterns, such as moving objects, in a sequence of images.
Ever since the seminal work of Horn and Schunck~\cite{HorSchu81} a variety of reliable and efficient methods have been proposed and successfully applied in a wide number of fields.

Primarily, optical flow is computed in the plane.
However, it is readily generalised to non-Euclidean settings allowing, for instance, for cell motion analysis in time-lapse microscopy data.
It has been only recently that high-resolution observations of biological model organisms such as the zebrafish became possible.
Despite its importance for tissue and organ formation, little is known about cell migration and proliferation patterns during the zebrafish's early embryonic development~\cite{AmaLemMosMcDWan14, SchmShaScheWebThi13}.
Fluorescence microscopy nowadays allows to record time-lapse images on the scale of single cells, see e.g.~\cite{Kel13, MegFra03, SchmShaScheWebThi13}.
Increasing spatial as well as temporal resolutions result in vast amounts of data, rendering extraction of information through visual inspection carried out by humans impracticable.
Automated cell motion estimation therefore is key to large-scale analysis of such data.
Optical flow computation delivers necessary quantitative methods and leads to insights into the underlying cellular mechanisms and the dynamic behaviour of cells.
See, for example,~\cite{AmaMyeKel13, MelCamLomRizVer07, QueMenCam10, SchmShaScheWebThi13} and the references therein.

The primary biological motivation for this work is the desire to analyse cell motion in a living zebrafish during early embryogenesis.
The data at hand depict endodermal cells expressing a green fluorescent protein.
By virtue of laser-scanning microscopy, (volumetric time-lapse) 4D images of these labelled cells can be recorded without capturing the background.
It is known that endodermal cells float on a so called \emph{monolayer} during early embryonic development meaning that they do not stack on top of each other~\cite{WarNus99}.
Figure~\ref{fig:raw} depicts two frames of the captured sequence, containing only the upper hemisphere of the animal embryo.
Observe the salient formation of the cells and the noise present in the images.
More precisely, one can see the nuclei of cells forming a round surface in a single layer.
For more details on the microscopy data we refer to Sec.~\ref{sec:data}.

We exploit this situation and model this layer as an evolving surface.
A natural candidate for a parametrisation of such a zebrafish embryo is a \emph{sphere-like surface}.
It is topologically diffeomorphic to the 2-sphere $\mathcal{S}^2$ and most commonly defined as the set of points
\begin{equation*}
	\{ \tilde{\rho}(x) x: x \in \mathcal{S}^2 \}.
\end{equation*}
The function $\tilde{\rho}: \mathcal{S}^2 \to (0, \infty)$ can be thought of as a radial deformation of $\mathcal{S}^2$ and will have a dependence on time in the present paper.
As a consequence, changes in the embryo's geometry are attributed accordingly, albeit valid only during early stages of its development as cells tend to cluster subsequently.
The main intention of this work is to conceive cell motion only on this moving 2-dimensional manifold.
As a result we are able to reduce the spatial dimension of the data allowing for more efficient motion estimation in microscopy data.
Figure~\ref{fig:data} depicts two frames of the surface together with images obtained by restriction of the volumetric microscopy data in Fig.~\ref{fig:raw}.

\begin{figure}[t]
	\includegraphics[width=0.49\textwidth]{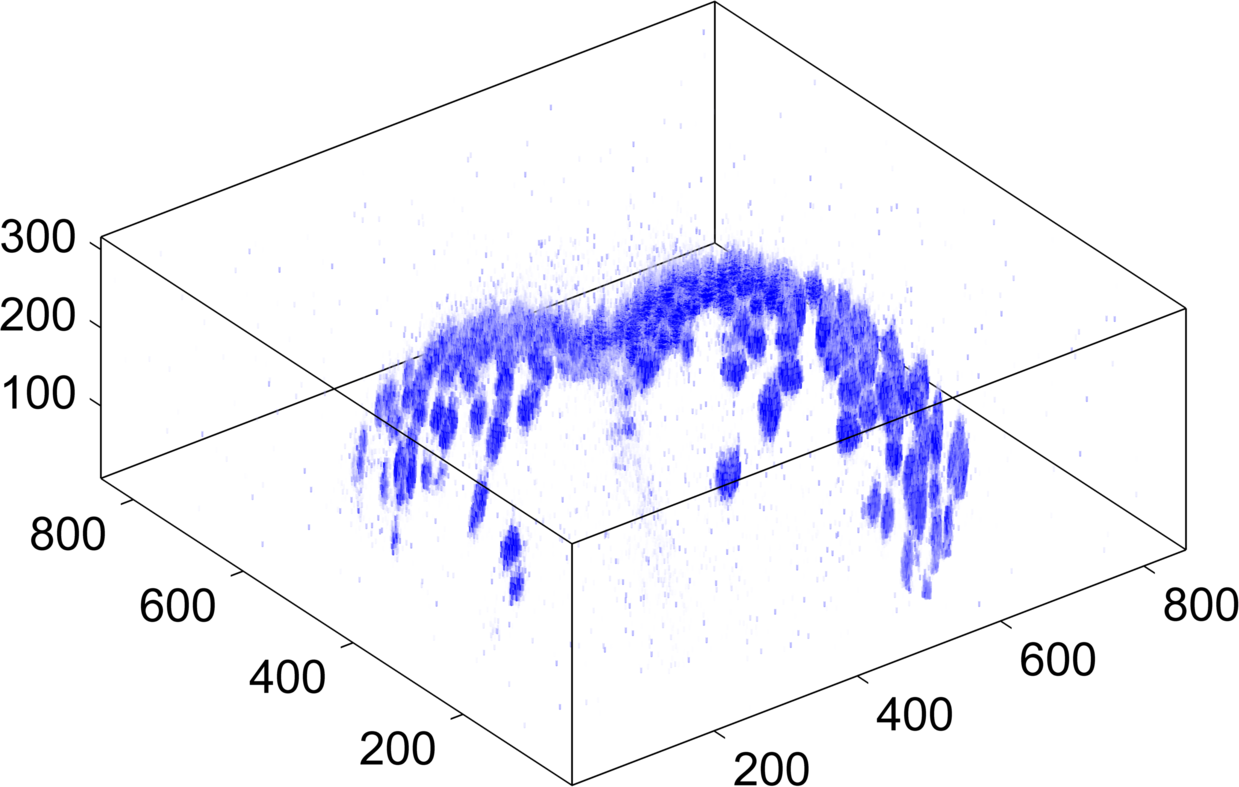} \hfill
	\includegraphics[width=0.49\textwidth]{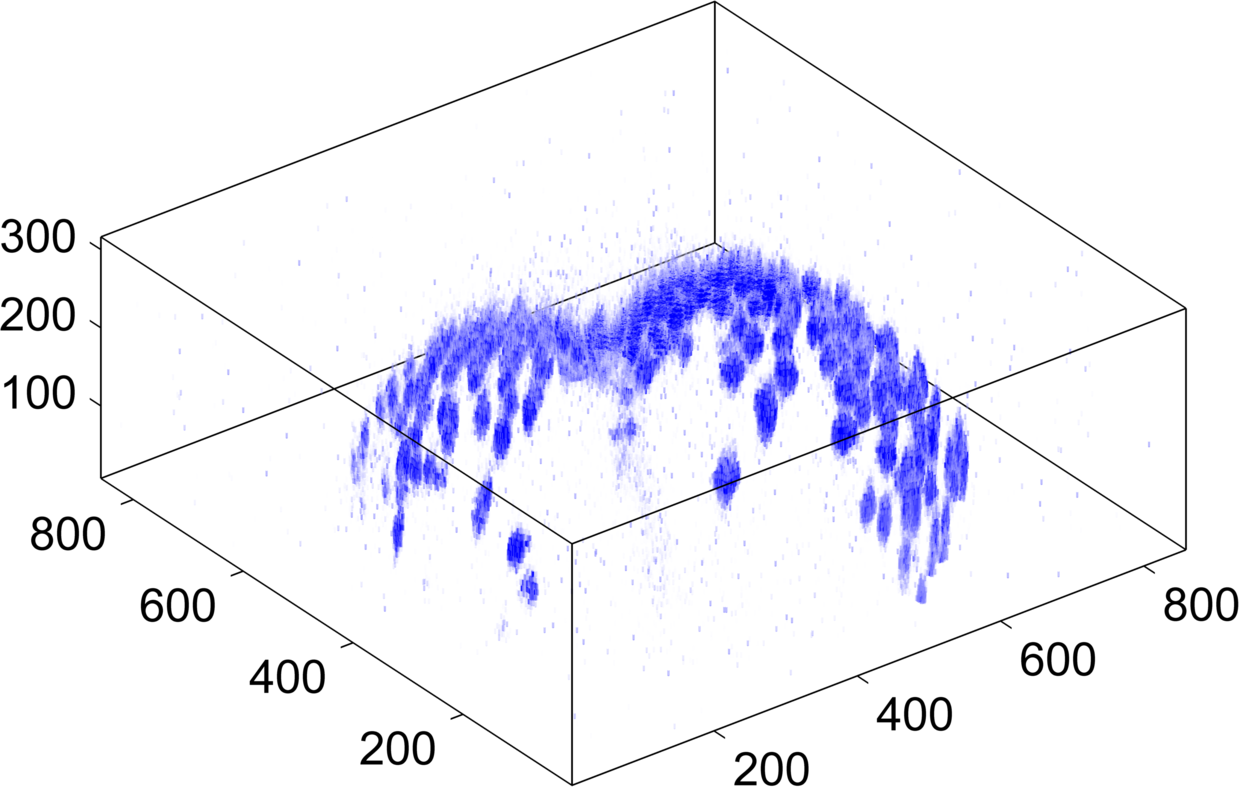}
	\caption{Frames 70 (left) and 71 (right) of the volumetric zebrafish microscopy images recorded during early embryogenesis. The sequence contains a total number of 75 frames. Fluorescence response is indicated by blue colour and is proportional to the observed intensity. All dimensions are in micrometer ($\mu$m).}
	\label{fig:raw}
\end{figure}

\begin{figure}[t]
	\includegraphics[width=0.49\textwidth]{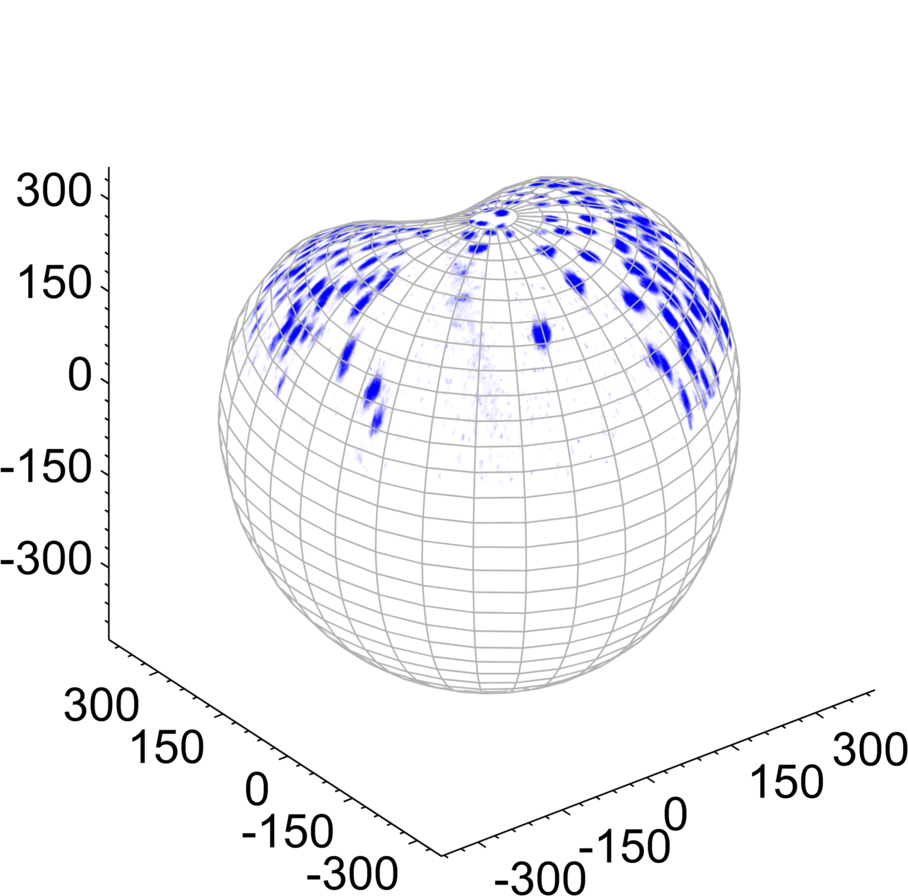} \hfill
	\includegraphics[width=0.49\textwidth]{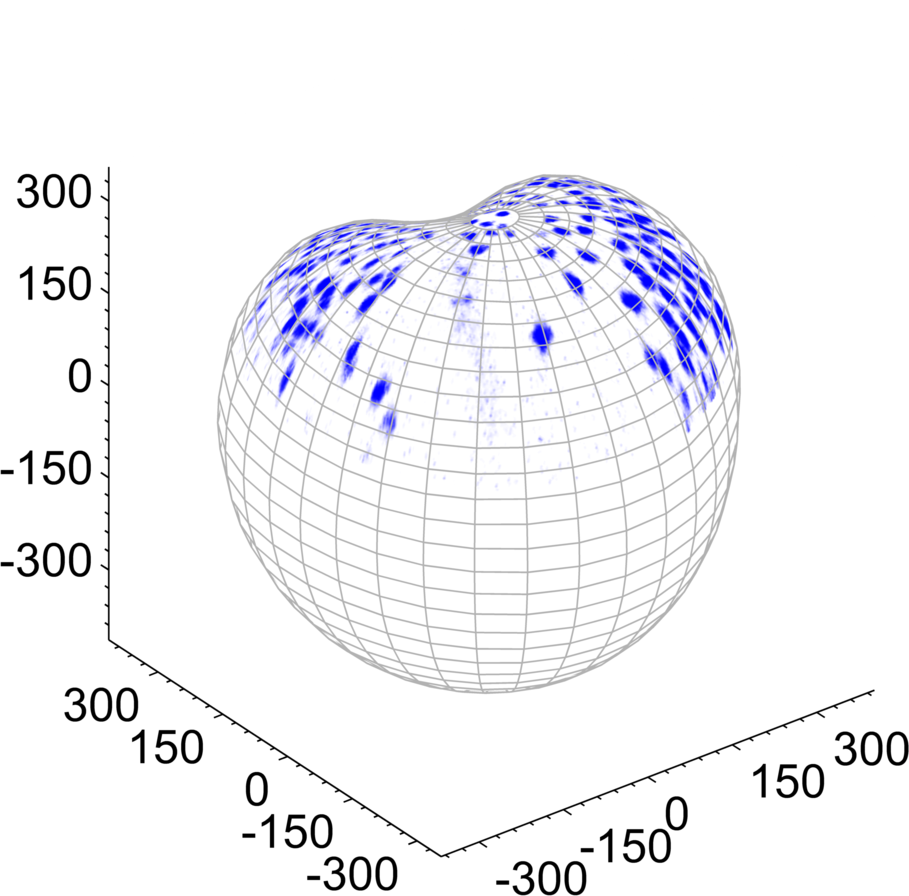} \\
	\includegraphics[width=0.49\textwidth]{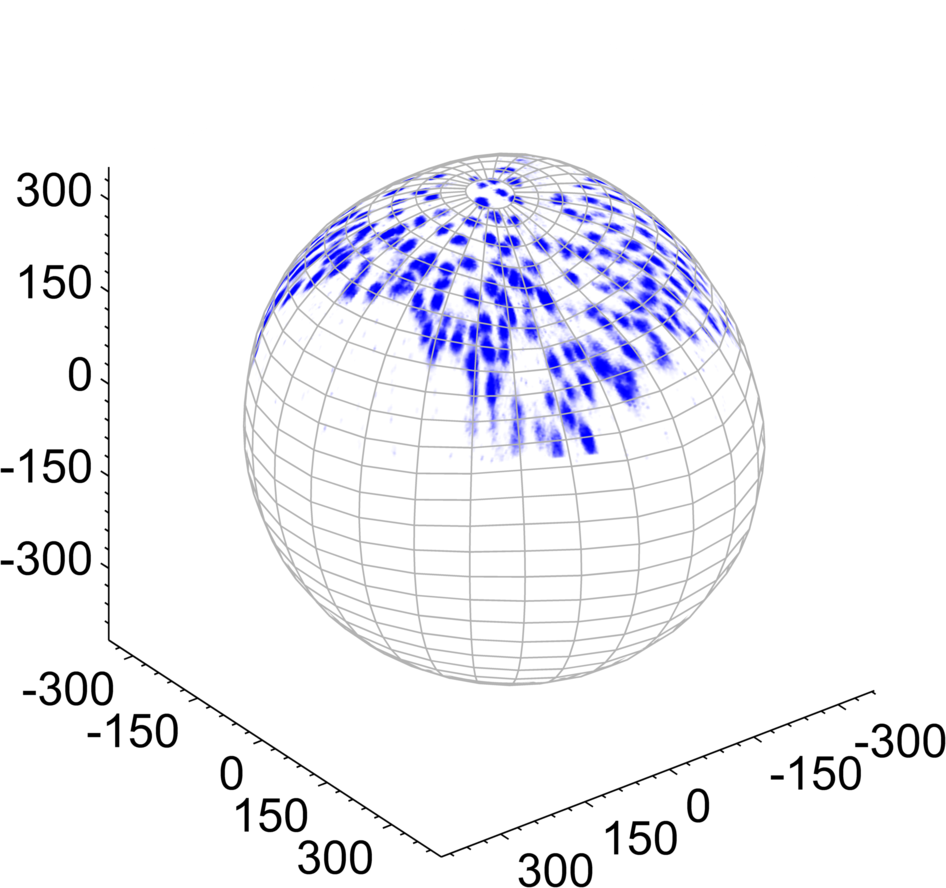} \hfill
	\includegraphics[width=0.49\textwidth]{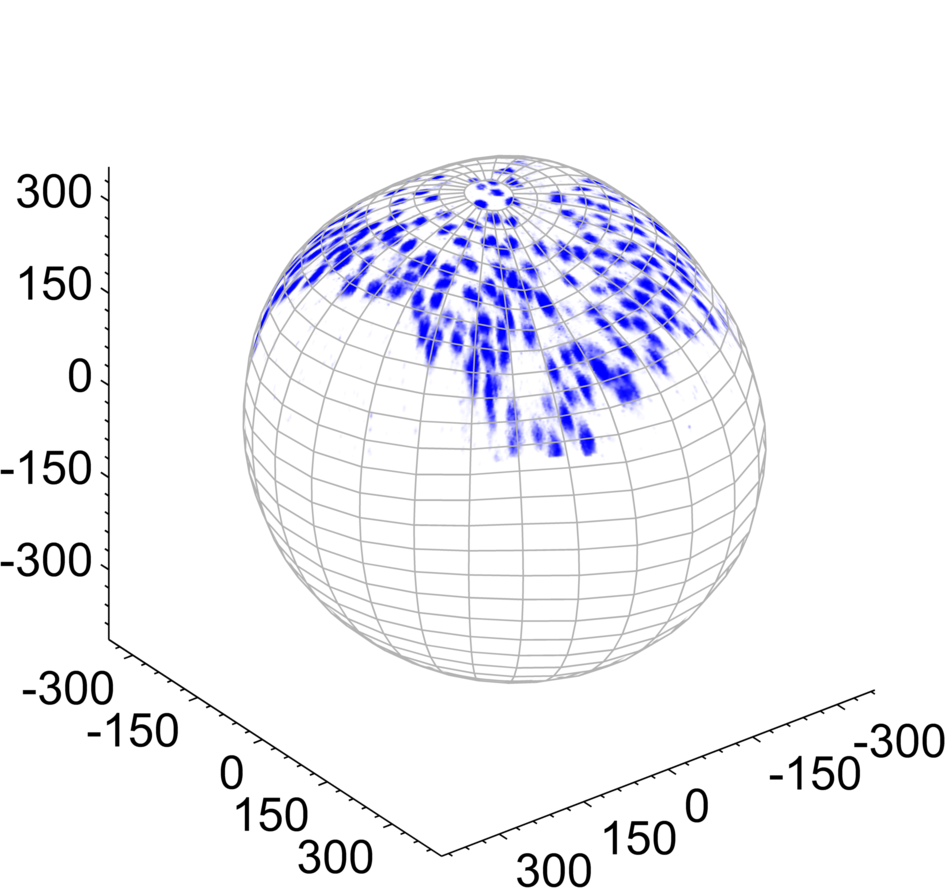} 
	\caption{Depicted are frames no. 70 (left) and 71 (right) of the processed zebrafish microscopy sequence. Top and bottom row differ by a rotation of 180 degrees around the $x_{3}$-axis. All dimensions are in micrometer ($\mu$m).}
	\label{fig:data}
\end{figure}

In this work we model the data as a time-dependent non-negative function $\hat{f}$.
Its value directly corresponds to the fluorescence response of the observed cells.
For a fixed time instant $t \in [0, T]$, the domain of $\hat{f}$ is presumed to be a closed surface $\mathcal{M}_{t} \subset \R^{3}$.
We assume that this surface can be parametrised by a smooth radial map from the 2-sphere.
The temporal evolution of the data $\hat{f}$ can then be tracked by solving an optical flow problem on this moving surface or, more conveniently, an equivalent problem on the round sphere. 

Traditionally, the starting point for optical flow is the assumption of constant brightness: a point moving along a trajectory does not change its intensity over time.
On a moving domain $\mathcal{M} = \{ \mathcal{M}_{t} \}_{t}$ one equivalently seeks, for every time $t \in [0, T]$, a tangent vector field $\mathbf{\hat{v}}$ that solves a generalised optical flow equation
\begin{equation}
	d_{t}^{\mathbf{\hat{V}}} \hat{f} + \nabla_{\mathcal{M}} \hat{f} \cdot \mathbf{\hat{v}} = 0
	\label{eq:gofe}
\end{equation}
at every point $x \in \mathcal{M}$, where $\hat{f}$ is the image sequence living on $\mathcal{M}$.
Here, for a fixed time $t$, $\nabla_{\mathcal{M}}$ denotes the (spatial) surface gradient, dot the standard inner product, and $d_{t}^{\mathbf{\hat{V}}} \hat{f}$ an appropriate temporal derivative.

The optical flow problem is ill-posed meaning that equation \eqref{eq:gofe} is not uniquely solvable.
A common approach to deal with non-uniqueness is Tikhonov regularisation, which consists of computing a minimiser of
\begin{equation*}
	\mathcal{E}_{\alpha}(\mathbf{\hat{v}}) = \mathcal{D}(\mathbf{\hat{v}}, \hat{f}) + \alpha \mathcal{R}(\mathbf{\hat{v}}).
\end{equation*}
The first term of the sum is usually the squared $L^{2}$ norm of the left-hand side of~\eqref{eq:gofe} and, in the present article, the second term will be an $H^{1}$ Sobolev norm.

\subsection{Contributions}

The primary concern of this article is optical flow computation on evolving 2-dimensional Riemannian manifolds which can be parametrised from the sphere.
Motivated by the aforementioned zebrafish microscopy data we consider closed surfaces for which the mapping
\begin{equation}
	(t, x) \mapsto \tilde{\rho}(t, x) x, \quad x \in \mathcal{S}^2
\label{eq:spheremap}
\end{equation}
is a diffeomorphism between the 2-sphere and $\mathcal{M}_{t}$ for every time $t \in [0, T]$.
As a prototypical example we restrict ourselves to radially parametrised surfaces as they suit quite naturally to the given data.

The contributions of this work are as follows.
First, we give a variational formulation of optical flow on 2-dimensional closed Riemannian manifolds.
We assume a dependence on time and speak of evolving surfaces.
The main idea is to solve the problem by a Galerkin method in a finite-dimensional subspace of an appropriate (vectorial) Sobolev space.
We take advantage of the fact that tangential vector spherical harmonics form a complete orthonormal system for $L^{2}(\mathcal{S}^2, T\mathcal{S}^2)$.
The sought vector field is thus uniquely determined when expanded in terms of the pushforward---by means of the differential of~\eqref{eq:spheremap}---of these functions.
From that we arrive at a minimisation problem over $\R^{n}$, where $n$ is the dimension of the finite-dimensional space, and state the optimality conditions.
They can be written purely in terms of spherical quantities and solved on the 2-sphere.
To this end, we use a standard polyhedral approximation and locally interpolate spherical functions by piecewise quadratic polynomials.
For numerical integration we employ appropriate quadrature rules on the approximated sphere.

Second, to obtain the smooth sphere-like surface, which is described by the map~\eqref{eq:spheremap}, from the observed microscopy data, we formulate another variational problem on the sphere.
The problem is essentially surface interpolation with $H^{s}$ Sobolev seminorm regularisation.
Approximate cell centres serve as sample points of the surface.
In particular, our microscopy data are supported only on the upper hemisphere, see Figs.~\ref{fig:raw} and~\ref{fig:data}.
Scalar spherical harmonics are the appropriate choice for the numerical solution of the surface fitting problem, as they provide great flexibility with respect to the chosen space $H^{s}$.

Finally, we present numerical experiments on the basis of the mentioned cell microscopy data of a live zebrafish.
To this end we compute an approximation of the sphere-shaped embryo and obtain a sequence of images living on this moving surface.
Eventually, we solve for the optical flow and present the results in a visually adequate manner.

\subsection{Related Work}

The first variational formulation of optical flow is commonly attributed to Horn and Schunck~\cite{HorSchu81}.
They attempted to compute a displacement field in $\R^{2}$ by minimising a Tikhonov-regularised energy functional.
It favours spatially regular vector fields by penalising its squared $H^{1}$ Sobolev seminorm.
For introductory material on the subject we refer to~\cite{AubDerKor99, AubKor06} and to~\cite{WeiBruBroPap06} for a survey on various optical flow functionals.
Well-posedness of the aforementioned energy was first shown by Schn\"orr~\cite{Schn91a}.
Moreover, there the problem was extended to irregular planar domains and solved by means of finite elements.

Weickert and Schn\"orr~\cite{WeiSchn01b} considered a spatio-temporal model by extending the domain to $\R^{2} \times [0, T]$.
It additionally favours temporal regularity of the solution by including first derivatives with respect to time.
Such models are of particular interest whenever trajectories are to be computed from the optical flow field.
A unifying framework including several spatial as well as temporal regularisers was proposed in~\cite{WeiSchn01a}.
For the purpose of evaluation and flow field visualisation a framework was created by Baker~et~al.~\cite{BakSchaLewRotBla11}.

Recently, generalisations to non-Euclidean domains have gained increasing attention.
In~\cite{ImiSugTorMoc05} and~\cite{TorImiSugMoc05} optical flow was considered in a spherical setting.
Lef\`{e}vre and Baillet~\cite{LefBai08} adapted the Horn-Schunck functional to surfaces embedded in $\R^{3}$.
Following Schn\"{o}rr~\cite{Schn91a}, they proved well-posedness of their formulation and employed a finite element method for solving the discrete problem on a triangle mesh.
With an application to cell motion analysis, Kirisits~et~al.~\cite{KirLanSch13, KirLanSch15} recently considered optical flow on evolving surfaces with boundary.
They generalised the spatio-temporal model in~\cite{WeiSchn01b} to a non-Euclidean and dynamic setting.
Eventually, the problem was tackled numerically by solving the corresponding Euler-Lagrange equations in the coordinate domain.
Similarly, Bauer~et~al.~\cite{BauGraKir15} studied optical flow on time-varying domains, with and without spatial boundary.
They proposed a treatment on surfaces parametrised by product manifolds, constructed an appropriate Riemannian metric, and proved well-posedness of their formulation.

In Kirisits~et~al.~\cite{KirLanSch14}, the authors considered various decomposition models for optical flow on the 2-sphere.
The proposed functionals were solved by means of projection to a finite-dimensional space spanned by vector spherical harmonics.
Concerning projection methods, Schuster and Weickert~\cite{SchuWei07} solved the optical flow problem in $\R^{2}$ solely based on regularisation by discretisation.

Regarding sphere-like surfaces and spherical harmonics expansion of closed surfaces we refer to~\cite{PenDin07} and the references therein.

Finally, let us mention~\cite{AmaMyeKel13, MelCamLomRizVer07, QueMenCam10, SchmShaScheWebThi13}, where optical flow was employed for the analysis of cell motion in microscopy data.
In particular, in Schmid~et~al.~\cite{SchmShaScheWebThi13} the embryo of a zebrafish was modelled as a round sphere and motion of endodermal cells computed in map projections.

The remainder of this article is structured as follows.
In Sec.~\ref{sec:background}, we formally introduce evolving sphere-like surfaces, recall the definition of vectorial Sobolev spaces on manifolds, and discuss both scalar and vector spherical harmonics on the 2-sphere.
Section~\ref{sec:model} is dedicated to optical flow on evolving surfaces and our variational formulation.
In Sec.~\ref{sec:numerics} we discuss the numerical solution.
In particular, we propose to solve the resulting energy in a finite-dimensional subspace and rewrite the optimality conditions to be defined solely on the 2-sphere.
Moreover, we show how to fit a sphere-like surface to the labelled cells in the microscopy data.
Finally, in Sec.~\ref{sec:experiments}, we solve for the optical flow field and visualise the results.
The appendix contains deferred material.
\section{Notation and Background} \label{sec:background}

\subsection{Sphere-Like Surfaces} \label{sec:diffgeo}

Let
\begin{equation*}
	\mathcal{S}^{2} = \{ x \in \R^3 : \norm{x} = 1 \}
\end{equation*}
be the 2-sphere embedded in the 3-dimensional Euclidean space. 
The norm of $\R^{n}$, $n = \{ 2, 3\}$, is denoted by $\norm{x} = \sqrt{x \cdot x}$.
By
\begin{equation}
	\x: \Omega \subset \R^{2} \to \R^{3}
\label{eq:sphereparam}
\end{equation}
we denote a smooth (local) parametrisation of $\mathcal{S}^{2}$ mapping coordinates $\xi = (\xi^{1}, \xi^{2})^{\top} \in \Omega$ to points $x = (x^{1}, x^{2}, x^{3})^{\top} \in \mathcal{S}^{2}$.

Furthermore, let $I \coloneqq [0, T] \subset \R$ denote a time interval and let $\mathcal{M} = \{ \mathcal{M}_{t} \}_{t \in I}$ be a family of closed smooth 2-manifolds $\mathcal{M}_{t} \subset \R^{3}$.
Each $\mathcal{M}_{t}$, $t \in I$, is assumed to be regular and oriented by the outward unit normal field $\mathbf{\hat{N}}(t, x) \in \R^{3}$, $x \in \mathcal{M}_{t}$.
We assume that $\mathcal{M}$ (locally) admits a smooth parametrisation of the form
\begin{equation}
	\y: I \times \Omega \to \R^{3}, \quad (t, \xi^{1}, \xi^{2})^{\top} \mapsto \tilde{\rho}(t, \x(\xi^{1}, \xi^{2})) \x(\xi^{1}, \xi^{2}) \in \mathcal{M}_{t}
\label{eq:param}
\end{equation}
and call $\mathcal{M}$ an \emph{evolving sphere-like surface}.

We denote by $\hat{f}: \mathcal{M} \to \R$ a smooth function on the moving surface.
Its coordinate representation $f: I \times \Omega \to \R$ and its corresponding spherical representation $\tilde{f}: I \times \mathcal{S}^{2} \to \R$ are given by
\begin{equation}
\label{eq:f_on_manifold}
	f(t, \xi) = \tilde{f}(t, \x(\xi)) = \hat{f}(t, \y(t, \xi)).
\end{equation}
As a notational convention we indicate functions living on $\mathcal{S}^{2}$ with a tilde and functions on $\mathcal{M}$ with a hat, respectively.
Their corresponding coordinate version is treated without special indication.

For convenience, we define smooth extensions of $\tilde{f}$ and $\hat{f}$ to $\R^{3} \setminus \{ 0 \}$ by
\begin{equation}
	\tilde{\bar{f}}(t, x) \coloneqq \tilde{f} \left(t, \frac{x}{\norm{x}} \right) \text{ and } \hat{\bar{f}}(t, x) \coloneqq \hat{f} \left(t, \tilde{\rho}\left(t, \frac{x}{\norm{x}} \right) \frac{x}{\norm{x}} \right),
\label{eq:extension}
\end{equation}
respectively.
Note that, while $\tilde{\bar{f}}$ is constant in the direction of the surface normal of $\mathcal{S}^{2}$, the extension $\hat{\bar{f}}$ in general is not.
We point at Fig.~\ref{fig:surfaces} illustrating the setting.

\begin{figure}[t]
	\begin{center}
	\begin{tikzpicture}
		\draw [-stealth', thick, gray] (canvas polar cs:angle=45,radius=1cm) to +(canvas polar cs:angle=45,radius=1cm);
		\draw [-stealth', thick, gray] (canvas polar cs:angle=70,radius=1.77cm) to +(canvas polar cs:angle=30,radius=1cm);
		\draw [thick] (0, 0) circle (1);
		\draw [thick] (0, 2) .. controls (canvas polar cs:angle=70,radius=2.05cm) and (canvas polar cs:angle=65,radius=1.55cm) .. (canvas polar cs:angle=45,radius=1.5cm);
		\draw [thick] (canvas polar cs:angle=45,radius=1.5cm) .. controls (canvas polar cs:angle=30,radius=1.6cm) and (canvas polar cs:angle=20,radius=2.05cm) .. (2, 0);
		\draw [thick] (-2, 0) .. controls (-2, 1.11) and (-1.11, 2) .. (0, 2);
		\draw [thick] (-2, 0) .. controls (-2, -1.11) and (-1.11, -2) .. (0, -2) .. controls (1.11, -2) and (2, -1.11) .. (2, 0);
		\draw (0, 0) to (canvas polar cs:angle=70,radius=3cm);
		\filldraw [black] (canvas polar cs:angle=70,radius=1cm) circle (2pt);
		\filldraw [black] (canvas polar cs:angle=70,radius=1.77cm) circle (2pt);
		\node [left] at (canvas polar cs:angle=73,radius=0.7cm) {$x$};
		\node [left] at (canvas polar cs:angle=73,radius=1.6cm) {$\tilde{\rho}(t, x) x$};
		\node [gray, right] at (canvas polar cs:angle=55,radius=2.6cm) {$\mathbf{\hat{N}}$};
		\node [gray, right] at (canvas polar cs:angle=45,radius=2cm) {$\mathbf{\tilde{N}}$};
		\node [right] at (canvas polar cs:angle=-40,radius=1.05cm) {$\mathcal{S}^2$};
		\node [right] at (canvas polar cs:angle=-40,radius=2.1cm) {$\mathcal{M}_{t}$};
	\end{tikzpicture}
	\end{center}
	\caption{Schematic illustration of a cut through the surfaces $\mathcal{S}^2$ and $\mathcal{M}_{t}$ intersecting the origin. In addition, a radial line along which the extensions $\tilde{\bar{f}}$ and $\hat{\bar{f}}$ are constant is shown. The surface normals are shown in grey.}
	\label{fig:surfaces}
\end{figure}
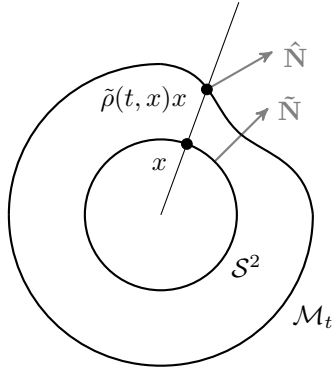

Similarly, for vector-valued functions $\mathbf{\tilde{u}}: I \times \mathcal{S}^{2} \to \R^{3}$ and $\mathbf{\hat{u}}: \mathcal{M} \to \R^{3}$ the extensions to $\R^{3} \setminus \{ 0 \}$ are defined component-wise and for all times $t \in I$.
They are denoted by $\mathbf{\tilde{\bar{u}}}$ and $\mathbf{\hat{\bar{u}}}$, respectively.
As a notational convention, boldface letters are used to denote vector fields.
Moreover, we distinguish between lower and upper case boldface letters.
The former identify tangent vector fields and their extensions to $\R^{3} \setminus \{ 0 \}$ whereas the latter indicate general vector fields in $\R^{3}$.

For a differentiable function $f: I \times \Omega \to \R$, we write $\partial_{i} f$ as an abbreviation for the partial derivative of $f$ with respect to $\xi^{i}$.
That is, $\nabla_{\R^{2}} f = (\partial_{1} f, \partial_{2} f)^{\top}$, where $\nabla_{\R^{2}}$ is the gradient of $\R^{2}$.

The tangent plane at a point $\y(t, \xi) \in \mathcal{M}_{t}$ is denoted by $T_{\y(t, \xi)}\mathcal{M}_{t}$ 
and the tangent bundle by $T\mathcal{M}_{t} = \left\{ \{\y(t, \xi)\} \times T_{\y(t, \xi)}\mathcal{M}_{t}: \xi \in \Omega \right\}$.
The orthogonal projector onto the tangent plane $T_{x}\mathcal{M}_{t}$ at $x \in \mathcal{M}_{t}$, $t \in I$, is given by
\begin{equation*}
	\mathrm{P}_{\mathcal{M}}(t, x) = \mathrm{Id} - \mathbf{\hat{N}}(t, x) \mathbf{\hat{N}}(t, x)^{\top} \in \R^{3 \times 3}.
\end{equation*}
In particular if $\mathcal{M}_{t} = \mathcal{S}^{2}$, that is $\tilde{\rho}$ in~\eqref{eq:param} is identically one for all $t \in I$, the outward unit normal and the orthogonal projector are given by $\mathbf{\tilde{N}}$ and $\mathrm{P}_{\mathcal{S}^{2}}$, respectively.

In what follows, we define spatial differential operators.
As they are identical to those on static surfaces we consider time $t \in I$ arbitrary but fixed.
Then, the surface gradient of $\hat{f}$, as given in \eqref{eq:f_on_manifold}, is defined by
\begin{equation}
	\nabla_{\mathcal{M}} \hat{f} \coloneqq \mathrm{P}_{\mathcal{M}} \nabla_{\R^3}\hat{\bar{f}} \in \R^3,
\label{eq:surfgrad}
\end{equation}
where $\nabla_{\R^3}$ denotes the usual gradient of the embedding space.
Let us stress that it is independent of the chosen extension, see e.g. \cite[p. 389]{GilTru01}.

We emphasise that, in particular, if $\mathcal{M}_t = \mathcal{S}^{2}$ for all $t \in I$ it follows that
\begin{equation*}
	\nabla_{\R^3} \tilde{\bar{f}} = \mathrm{P}_{\mathcal{S}^{2}} \nabla_{\R^3} \tilde{\bar{f}} + (\mathrm{Id} - \mathrm{P}_{\mathcal{S}^{2}}) \nabla_{\R^3} \tilde{\bar{f}}.
\end{equation*}
The last term of the sum on the right hand side is the normal derivative of $\tilde{\bar{f}}$, which according to the definition of the extension in \eqref{eq:extension} vanishes.
Thus,
\begin{equation}
	\nabla_{\mathcal{S}^2} \tilde{f} = \mathrm{P}_{\mathcal{S}^2} \nabla_{\R^3} \tilde{\bar{f}} = \nabla_{\R^3} \tilde{\bar{f}}.
\label{eq:surfgrad_S}
\end{equation}

For convenience let us observe that, by taking $\partial_{i} f$ in \eqref{eq:f_on_manifold}, we arrive at
\begin{equation}
	\partial_{i} f(t, \xi) = \nabla_{\R^{3}} \tilde{\bar{f}}(t, \x(\xi)) \cdot \partial_{i} \x(\xi) = \nabla_{\mathcal{S}^2} \tilde{f}(t, \x(\xi)) \cdot \partial_{i} \x(\xi)
\label{eq:partialf}
\end{equation}
due to the chain rule and the projection onto the tangent plane $T_{\x(\xi)}\mathcal{S}^2$.

Analogously to the surface gradient we define the spherical Laplace-Beltrami of $\tilde{f}: I \times \mathcal{S}^2 \to \R$ as
\begin{equation}
	\Delta_{\mathcal{S}^2} \tilde{f} = - \Delta_{\R^{3}} \tilde{\bar{f}},
\label{eq:laplacebeltrami}
\end{equation}
where $\Delta_{\R^{3}}$ is the standard Laplacian of $\R^{3}$.

The set
\begin{equation}
	\{ \partial_{1} \y(t, \xi), \partial_{2} \y(t, \xi) \} \subseteq \R^3,
\label{eq:tangentbasis}
\end{equation}
where $\y$ is the parametrisation defined in~\eqref{eq:param}, forms a basis of the tangent space $T_{\y(t, \xi)}\mathcal{M}_{t}$ at $\y(t, \xi)$.
Its elements form the gradient matrix $D \y$, which is derived as follows.

Let $\tilde{\bar{\rho}}$ be the extension of $\tilde{\rho}: I \times \mathcal{S}^{2} \to (0, \infty)$ according to \eqref{eq:extension}.
Then, $\y$ from~\eqref{eq:param} can be rewritten as
\begin{equation*}
	\y(t, \xi) = \tilde{\bar{\rho}}(t, \x(\xi)) \x(\xi).
\end{equation*}
By the chain rule,
\begin{equation*}
	\partial_i \y (t, \xi) = \bigl( \nabla_{\R^3} \tilde{\bar{\rho}}(t, \x(\xi)) \cdot \partial_i \x(\xi) \bigr) \x(\xi) + \tilde{\bar{\rho}}(t, \x(\xi)) \partial_i \x(\xi).
\end{equation*}
Using~\eqref{eq:surfgrad_S} and the fact that $\tilde{\bar{\rho}}$ equals $\tilde{\rho}$ on $\mathcal{S}^2$ gives
\begin{equation*}
	\partial_i \y = \bigl( \nabla_{\mathcal{S}^2} \tilde{\rho} \cdot \partial_i \x \bigr) \x + \tilde{\rho} \partial_i \x,
\end{equation*}
where we have omitted the arguments $(t, \xi)$ and $(\xi)$ for better readability.
Whenever convenient and no confusion will arise we will continue to do so.

By applying~\eqref{eq:partialf} backwards and the fact that $\tilde{\rho}(t, \x(\xi)) = \rho(\xi)$ we have shown
\begin{equation}
\begin{aligned}
	D\y &= \begin{pmatrix}
		\partial_{1} \y & \partial_{2} \y
	\end{pmatrix} \\
	&=
	\begin{pmatrix}
		(\partial_{1} \rho) \x & (\partial_{2} \rho) \x
	\end{pmatrix} + \rho D\x
	\in \R^{3 \times 2},
\end{aligned}
\label{eq:Dy}
\end{equation}
where $D\x = (\partial_{1} \x, \partial_{2} \x)$ is the gradient matrix associated with $\x$.

As a consequence, we can uniquely represent a tangent vector $\mathbf{\hat{u}} \in T_{\y(t, \xi)}\mathcal{M}_{t}$ as $\mathbf{\hat{u}} = \sum_{i=1}^{2} u^{i} \partial_{i} \y$, where $\mathbf{u} = (u^{1}, u^{2})^{\top} \in \R^{2}$ is its coordinate representation, see e.g. \cite[Prop. 3.15]{Lee13}.
We call $u^{i}$ the components of $\mathbf{\hat{u}}$.

In the sequel we will use Einstein summation convention.
We sum over every index letter that appears exactly twice in an expression, once as a sub- and once as a superscript.
For instance, we write $\mathbf{\hat{u}} = u^{i} \partial_{i} \y$ for the sake of brevity.

We underline that the coordinate basis~\eqref{eq:tangentbasis} is not orthogonal in general.
We will, however, require an orthonormal frame $\{ \mathbf{\hat{e}}_{1}(t, \xi), \mathbf{\hat{e}}_{2}(t, \xi) \}$ of the tangent space $T_{\y(t, \xi)}\mathcal{M}_{t}$ from Sec.~\ref{sec:sobolevspaces} onwards.
In the coordinate basis it reads
\begin{equation}
	\mathbf{\hat{e}}_{i} = \alpha_{i}^{j} \partial_{j} \y,
\label{eq:onb}
\end{equation}
where $\alpha_{i}^{j}: I \times \Omega \to \R$, $i, j = \{ 1, 2 \}$, are functions obtained from the Gram-Schmidt process.

Combining~\eqref{eq:f_on_manifold} and~\eqref{eq:partialf} with the expressions derived for $D\x$ and $D\y$ we can conveniently state that
\begin{equation}
	\nabla_{\R^{2}} f = D\x^{\top} \nabla_{\mathcal{S}^{2}} \tilde{f} \text{ and } \nabla_{\R^{2}} f = D\y^{\top} \nabla_{\mathcal{M}} \hat{f}.
\label{eq:gradf}
\end{equation}

Let us derive the following useful generalisation of~\eqref{eq:partialf}.
For a tangent vector $\mathbf{\tilde{v}} = v^{i} \partial_{i} \x \in T_{x}\mathcal{S}^{2}$, $x \in \mathcal{S}^{2}$, the directional derivative of $\tilde{f}$ along $\mathbf{\tilde{v}}$ at $x$ is
\begin{equation}
	\nabla_{\mathcal{S}^{2}} \tilde{f} \cdot \mathbf{\tilde{v}} = \nabla_{\mathcal{S}^{2}} \tilde{f} \cdot v^{i} \partial_{i} \x = (D\x^{\top} \nabla_{\mathcal{S}^{2}} \tilde{f}) \cdot \mathbf{v} = \nabla_{\R^{2}} f \cdot \mathbf{v} = v^{i} \partial_{i} f,
\label{eq:directionalderiv_S}
\end{equation}
where the third equality follows from the first equation in~\eqref{eq:gradf}.
Analogously, for $\mathbf{\hat{v}} = v^{i} \partial_{i} \y \in T_{x}\mathcal{M}_{t}$, with $x \in \mathcal{M}_{t}$ and $t \in I$, one can derive
\begin{equation}
	\nabla_{\mathcal{M}} \hat{f} \cdot \mathbf{\hat{v}} = v^{i} \partial_{i} f.
\label{eq:directionalderiv_N}
\end{equation}
As soon as we have established the relation between $\mathbf{\hat{v}}$ and $\mathbf{\tilde{v}}$ it will conveniently allow us to switch between~\eqref{eq:directionalderiv_S} and~\eqref{eq:directionalderiv_N}.

Moreover, the coordinate representation of the surface gradient~\eqref{eq:surfgrad_S} is derived as follows.
Let us start out with the first equation in~\eqref{eq:gradf}.
By writing $\nabla_{\mathcal{S}^2} \tilde{f}$ in the coordinate basis, that is $\nabla_{\mathcal{S}^2} \tilde{f} = D\x \mathbf{u}$ for some $\mathbf{u}$, we obtain from~\eqref{eq:gradf}
\begin{equation*}
	\nabla_{\R^{2}} f = D\x^{\top} D\x \mathbf{u}.
\end{equation*}
Multiplying with $(D\x^{\top} D\x)^{-1}$ from the left yields
\begin{equation*}
	(D\x^{\top} D\x )^{-1} \nabla_{\R^{2}} f = \mathbf{u}.
\end{equation*}
Thus,
\begin{equation}
	\nabla_{\mathcal{S}^2} \tilde{f} = D\x \mathbf{u} = D\x (D\x^{\top} D\x )^{-1} \nabla_{\R^{2}} f.
\label{eq:surfgrad_S_coord}
\end{equation}

Furthermore, let $t \in I$ be fixed and let $\hat{f}(t, \cdot): \mathcal{M}_{t} \to \R$.
The surface integral of $\hat{f}$ is
\begin{equation}
	\int_{\mathcal{M}_{t}} \hat{f}  \; d\mathcal{M}_{t} = \int_{\Omega} f J\y \; d\xi,
\label{eq:surfintegral}
\end{equation}
where $J\y$ is the Jacobian of $\y$.
According to Theorem~3 in \cite[p. 88]{EvaGar92}, it is given by
\begin{equation*}
	(J\y)^{2} = \det(D\y^{\top} D\y)
\end{equation*}
and by using~\eqref{eq:Dy} yields
\begin{equation}
\begin{aligned}
	(J\y)^{2} & = \rho^{2} \left.\bigl( (\partial_{1} \rho)^{2} \partial_{2} \x \cdot \partial_{2} \x + (\partial_{2} \rho)^{2} \partial_{1} \x \cdot \partial_{1} \x \right. \\
	& \left. + \rho^{2} (\partial_{1} \x \cdot \partial_{1} \x) (\partial_{2} \x \cdot \partial_{2} \x) - 2 \partial_{1} \rho \partial_{2} \rho (\partial_{1} \x \cdot \partial_{2} \x) - \rho^{2} (\partial_{1} \x \cdot \partial_{2} \x)^{2} \right.\bigr).
\end{aligned}
\label{eq:jacobian}
\end{equation}
Note that $\x \cdot \x = 1$ and thus, terms of the form $\partial_{i} \x \cdot \x$ vanish.
By the differentiability of $\x$, ones has
\begin{equation*}
	\partial_{i} (\x \cdot \x) = 0.
\end{equation*}
Therefore, $\partial_{i} \x \cdot \x = 0$, meaning that tangential and normal vectors are orthogonal.
We emphasise that $D\y^{\top} D\y$ is commonly referred to as \emph{Riemannian metric}.
It is positive definite and thus, $(J\y(t, \xi))^{2} > 0$ for all $(t, \xi) \in I \times \Omega$.

The parametrisations $\x$ and $\y$ defined in~\eqref{eq:sphereparam} and~\eqref{eq:param}, respectively, suggest the straightforward construction of a smooth map $\tilde{\phi}(t, \cdot): \mathcal{S}^{2} \to \mathcal{M}_{t}$.
It is given by the composition $(\y \circ \x^{-1})(t, \cdot)$, that is
\begin{equation*}
	\tilde{\phi}(t, x): x \mapsto \tilde{\rho}(t, x)x.
\end{equation*}
The differential $D\tilde{\phi}(t, x): T_{x}\mathcal{S}^2 \to T_{\tilde{\phi}(t, x)}\mathcal{M}_{t}$ of $\tilde{\phi}$ is a linear map and is given by
\begin{equation}
	D\tilde{\phi}(t, x) = \tilde{\rho}(t, x) \mathrm{Id} + x \nabla_{\mathcal{S}^2} \tilde{\rho}(t, x)^{\top} \in \R^{3 \times 3}.
\label{eq:differential}
\end{equation}
It follows from a direct calculation akin to the derivation of $D\y$ in~\eqref{eq:Dy}.

Let us exhibit the action of $D\tilde{\phi}(t, x)$, for $x = \x(\xi)$ and $t \in I$, onto a tangent vector $\mathbf{\tilde{v}} = v^{i} \partial_{i} \x \in T_{x}\mathcal{S}^2$.
We have
\begin{equation}
\begin{aligned}
	D\tilde{\phi}(t, x)(\mathbf{\tilde{v}}) & = \tilde{\rho}(t, x) \mathbf{\tilde{v}} + x (\nabla_{\mathcal{S}^2} \tilde{\rho}(t, x) \cdot \mathbf{\tilde{v}}) \\
	& \overset{\mathclap{\eqref{eq:directionalderiv_S}}}{=} \tilde{\rho}(t, x) \mathbf{\tilde{v}} + x v^{i} \partial_{i} \rho(\xi) \\
	& = \tilde{\rho}(t, x) v^{i} \partial_{i} \x + x v^{i} \partial_{i} \rho(\xi) \\
	& = v^{i} \bigl( \tilde{\rho}(t, x) \partial_{i} \x + x \partial_{i} \rho(\xi) \bigr) \\
	& \overset{\mathclap{\eqref{eq:Dy}}}{=} v^{i} \partial_{i} \y(\xi).
\end{aligned}
\label{eq:Dphiv}
\end{equation}
In other words, the components $(v^{1}, v^{2})^{\top}$ are preserved whenever a tangent vector is mapped from $\mathcal{S}^2$ to $\mathcal{M}_{t}$ via the differential~\eqref{eq:differential}.

As a matter of fact, given a tangent vector field $\mathbf{\tilde{v}} = v^{i} \partial_{i} \x$ on $\mathcal{S}^2$, the differential $D\tilde{\phi}$ gives rise to a unique tangent vector field $\mathbf{\hat{v}} = v^{i} \partial_{i} \y$ on $\mathcal{M}_{t}$, see~\cite[Chapter 8]{Lee13}.
Whenever we use $\mathbf{\tilde{v}}$ and $\mathbf{\hat{v}}$ in the sequel we refer to their unique identification via the differential~\eqref{eq:differential} and call $\mathbf{\hat{v}}$ the pushforward of $\mathbf{\tilde{v}}$.
At this point, the reader might find it helpful to have a look at Fig.~\ref{fig:cd}.

With the above definitions at hand we are able to relate the surface integral~\eqref{eq:surfintegral} to an integral on $\mathcal{S}^{2}$ via a change of variables.
The key is to compute a meaningful surface element as $\abs{\det(D\tilde{\phi})}$ is the magnitude of the change of the volume element.
The following lemma provides the required form.
\begin{lemma}
\label{lem:surfintegral}
Let $\x: [0, \pi] \times [0, 2\pi) \to \R^{3}$ be the standard parametrisation of $\mathcal{S}^2$,
\begin{equation*}
	(\xi^{1}, \xi^{2})^{\top} \mapsto (\sin{\xi^{1}} \cos{\xi^{2}}, \sin{\xi^{1}} \sin{\xi^{2}}, \cos{\xi^{1}} )^{\top},
\end{equation*}
and let $\hat{f}: \mathcal{M} \to \R$ and $\tilde{\rho}: I \times \mathcal{S}^2 \to (0, \infty)$ be as above.
Then, for $t \in I$,
\begin{equation*}
	\int_{\mathcal{M}_{t}} \hat{f} \, d\mathcal{M}_{t} = \int_{\mathcal{S}^2} \tilde{f} \tilde{\rho} \sqrt{\norm{\nabla_{\mathcal{S}^2} \tilde{\rho}}^{2} + \tilde{\rho}^{2}} \, d\mathcal{S}^2.
\end{equation*}
\end{lemma}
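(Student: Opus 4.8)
The plan is to compute the surface element of $\mathcal{M}_t$ directly from the Jacobian formula~\eqref{eq:jacobian} specialised to the standard spherical parametrisation, and then to recognise the resulting scalar factor as $\tilde{\rho}\sqrt{\norm{\nabla_{\mathcal{S}^2}\tilde{\rho}}^2 + \tilde{\rho}^2}$ evaluated along $\x$. Starting from~\eqref{eq:surfintegral}, which already gives $\int_{\mathcal{M}_t}\hat f\,d\mathcal{M}_t = \int_\Omega f\,J\y\,d\xi$, the task reduces to identifying $J\y$ explicitly. First I would plug the standard parametrisation $\x(\xi^1,\xi^2) = (\sin\xi^1\cos\xi^2,\sin\xi^1\sin\xi^2,\cos\xi^1)^\top$ into~\eqref{eq:jacobian}. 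For this $\x$ one has the elementary identities $\partial_1\x\cdot\partial_1\x = 1$, $\partial_2\x\cdot\partial_2\x = \sin^2\xi^1$, and $\partial_1\x\cdot\partial_2\x = 0$, so~\eqref{eq:jacobian} collapses to
\begin{equation*}
    (J\y)^2 = \rho^2\Bigl( (\partial_1\rho)^2\sin^2\xi^1 + (\partial_2\rho)^2 + \rho^2\sin^2\xi^1 \Bigr),
\end{equation*}
where $\rho(\xi) = \tilde\rho(t,\x(\xi))$. Factoring out $\sin^2\xi^1$ from the bracket gives $(J\y)^2 = \rho^2\sin^2\xi^1\bigl( (\partial_1\rho)^2 + (\partial_2\rho)^2/\sin^2\xi^1 + \rho^2 \bigr)$, hence $J\y = \rho\,\sin\xi^1\,\sqrt{(\partial_1\rho)^2 + (\partial_2\rho)^2/\sin^2\xi^1 + \rho^2}$, using positivity of $\rho$ and of $\sin\xi^1$ on $(0,\pi)$.

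Next I would identify the square-root factor with the spherical gradient norm. Using~\eqref{eq:surfgrad_S_coord}, the coordinate representation of $\nabla_{\mathcal{S}^2}\tilde\rho$ is $D\x(D\x^\top D\x)^{-1}\nabla_{\R^2}\rho$, so $\norm{\nabla_{\mathcal{S}^2}\tilde\rho}^2 = \nabla_{\R^2}\rho^\top (D\x^\top D\x)^{-1}\nabla_{\R^2}\rho$. For the standard parametrisation the first fundamental form is diagonal, $D\x^\top D\x = \operatorname{diag}(1,\sin^2\xi^1)$, so this inner product is simply $(\partial_1\rho)^2 + (\partial_2\rho)^2/\sin^2\xi^1$. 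Therefore the bracket under the square root equals exactly $\norm{\nabla_{\mathcal{S}^2}\tilde\rho}^2 + \tilde\rho^2$ (evaluated at $\x(\xi)$), and combining with the factor $\rho\,\sin\xi^1 = \tilde\rho(t,\x(\xi))\,\sin\xi^1$ we get
\begin{equation*}
    J\y(t,\xi) = \tilde\rho(t,\x(\xi))\,\sqrt{\norm{\nabla_{\mathcal{S}^2}\tilde\rho(t,\x(\xi))}^2 + \tilde\rho(t,\x(\xi))^2}\;\sin\xi^1 .
\end{equation*}
Finally I would substitute this into $\int_\Omega f\,J\y\,d\xi$, recall that $f(t,\xi) = \tilde f(t,\x(\xi))$ by~\eqref{eq:f_on_manifold}, and recognise $\sin\xi^1\,d\xi^1\,d\xi^2$ as the surface element $d\mathcal{S}^2$ of the round sphere in the standard parametrisation, i.e. $\int_\Omega g(\x(\xi))\sin\xi^1\,d\xi = \int_{\mathcal{S}^2} g\,d\mathcal{S}^2$. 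This yields the claimed identity.

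I do not anticipate a serious obstacle: the argument is a direct computation once~\eqref{eq:jacobian} and~\eqref{eq:surfgrad_S_coord} are invoked. The only points requiring mild care are the boundary behaviour of the parametrisation — $\sin\xi^1$ vanishes at $\xi^1\in\{0,\pi\}$, a set of measure zero, so it does not affect the integral and the change-of-variables formula still applies — and making sure the $1/\sin^2\xi^1$ terms are handled as part of the metric inverse rather than treated as genuine singularities. A slightly cleaner, coordinate-free alternative would be to note that $d\mathcal{M}_t$ pulls back under $\tilde\phi$ with a factor that can be read off from $D\tilde\phi$ in~\eqref{eq:differential}: since $\nabla_{\mathcal{S}^2}\tilde\rho \perp x$ and $\mathbf{\tilde N} = x$ on $\mathcal{S}^2$, one computes that $D\tilde\phi$ maps an orthonormal tangent frame to vectors whose Gram determinant is $\tilde\rho^2(\norm{\nabla_{\mathcal{S}^2}\tilde\rho}^2 + \tilde\rho^2)$, giving the same factor without fixing coordinates; but since the lemma is stated for the explicit standard parametrisation, the direct substitution into~\eqref{eq:jacobian} is the most economical route.
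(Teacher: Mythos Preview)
Your proposal is correct and follows essentially the same route as the paper: both specialise~\eqref{eq:jacobian} to the standard parametrisation using $D\x^{\top}D\x = \operatorname{diag}(1,\sin^{2}\xi^{1})$, identify the resulting bracket with $\norm{\nabla_{\mathcal{S}^2}\tilde\rho}^{2} + \tilde\rho^{2}$, and recognise $\sin\xi^{1}\,d\xi$ as $d\mathcal{S}^{2}$. The only cosmetic difference is that the paper computes $\norm{\nabla_{\mathcal{S}^2}\tilde\rho}^{2}$ by expanding the gradient in the orthonormal frame $\{\mathbf{\tilde e}_{1},\mathbf{\tilde e}_{2}\}$, whereas you obtain it directly from the inverse metric via~\eqref{eq:surfgrad_S_coord}; these are the same calculation.
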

\begin{proof}
Let us denote by $\mathbf{\tilde{e}}_{1}(\xi)$ and $\mathbf{\tilde{e}}_{2}(\xi)$ the orthogonal unit vectors on $\mathcal{S}^{2}$ in direction of $\xi^{1}$ and $\xi^{2}$, respectively, which are obtained by normalising the coordinate basis $\{ \partial_{1} \x(\xi), \partial_{2} \x(\xi) \}$.
That is,
\begin{equation}
	\mathbf{\tilde{e}}_{1}(\xi) = \partial_{1} \x(\xi) \text{ and } \mathbf{\tilde{e}}_{2}(\xi) = \frac{\partial_{2} \x(\xi)}{\norm{\partial_{2} \x(\xi)}}.
\label{eq:onb_S}
\end{equation}
Moreover, a straightforward calculation gives
\begin{equation*}
	D\x^{\top} D\x = \begin{pmatrix}
		1 & 0 \\
		0 & \sin^{2}{\xi^{1}}
	\end{pmatrix}
\end{equation*}
and thus, the surface gradient of $\tilde{\rho}$ in spherical coordinates~\eqref{eq:surfgrad_S_coord} is given by
\begin{align*}
	\nabla_{\mathcal{S}^2} \tilde{\rho}(t, \x(\xi)) & = \partial_{1} \rho(\xi) \, \partial_{1} \x(\xi) + \frac{1}{\sin^{2}{\xi^{1}}} \partial_{2} \rho(\xi) \, \partial_{2} \x(\xi) \\
	& \overset{\mathclap{\eqref{eq:onb_S}}}{=} \partial_{1} \rho(\xi) \, \mathbf{\tilde{e}}_{1}(\xi) + \frac{1}{\sin{\xi^{1}}} \partial_{2} \rho(\xi) \, \mathbf{\tilde{e}}_{2}(\xi),
\end{align*}
where we have replaced the coordinate basis with the orthonormal basis.

Using $D\x^{\top} D\x$ in~\eqref{eq:jacobian}, the Jacobian $J\y$ can be written as
\begin{align*}
	(J\y)^{2} & = \rho^{2} \bigl( (\partial_{1} \rho)^{2} \sin^{2}{\xi^{1}} + (\partial_{2} \rho)^{2} + \rho^{2} \sin^{2}{\xi^{1}} \bigr) \\
	& = \rho^{2} \bigl( (\partial_{1} \rho)^{2} + \frac{1}{\sin^{2}{\xi^{1}}}(\partial_{2} \rho)^{2} + \rho^{2} \bigr) \sin^{2}{\xi^{1}} \\
	& = \rho^{2} \bigl( \norm{\nabla_{\mathcal{S}^2} \tilde{\rho}}^{2} + \rho^{2} \bigr) \sin^{2}{\xi^{1}}.
\end{align*}
Here, we have omitted the argument $(t, \x(\xi))$ of $\nabla_{\mathcal{S}^2} \tilde{\rho}$.
Then, the integral turns out to be
\begin{align*}
	\int_{\mathcal{M}_{t}} \hat{f} \, d\mathcal{M}_{t} & = \int_{0}^{2\pi} \int_{0}^{\pi} f J\y \; d\xi \\
	& = \int_{0}^{2\pi} \int_{0}^{\pi} f \rho \sqrt{\norm{\nabla_{\mathcal{S}^2} \tilde{\rho}}^{2} + \rho^{2}} \sin{\xi^{1}} \; d\xi \\
	& = \int_{\mathcal{S}^2} \tilde{f} \tilde{\rho} \sqrt{\norm{\nabla_{\mathcal{S}^2} \tilde{\rho}}^{2} + \tilde{\rho}^{2}} \, d\mathcal{S}^2,
\end{align*}
where the last equation follows from \eqref{eq:surfintegral} if $\mathcal{M}_{t} = \mathcal{S}^{2}$, the fact that $\sin{\xi^{1}} \ge 0$, and
\begin{equation*}
	J\x = \sqrt{\det(D\x^{\top} D\x)} = \sin{\xi^{1}}.
\end{equation*}
\end{proof}

The concepts introduced above, and further properties thereof, may be found in any standard differential geometry book.
For instance, in~\cite{Car76, Car92, Lee97, Lee13}.

\subsection{Vectorial Sobolev Spaces on Manifolds} \label{sec:sobolevspaces}

We briefly introduce the appropriate function spaces required for the variational optical flow formulation on Riemannian manifolds.
Again, let us consider time $t \in I$ arbitrary but fixed.

For a tangent vector field $\mathbf{\hat{v}}$ on $\mathcal{M}_{t}$ we denote by $\nabla_{\mathbf{\hat{u}}} \mathbf{\hat{v}}(x)$ the covariant derivative of $\mathbf{\hat{v}}$ at $x \in \mathcal{M}_{t}$ along the direction of a tangent vector $\mathbf{\hat{u}} \in T_{x}\mathcal{M}_{t}$.
We define it as the tangential part of the usual directional derivative of the extension $\mathbf{\hat{\bar{v}}}$ along $\mathbf{\hat{u}}$ in the embedding space, that is,
\begin{equation}
	\nabla_{\mathbf{\hat{u}}} \mathbf{\hat{v}} \coloneqq \mathrm{P}_{\mathcal{M}} \nabla_{\R^3} \mathbf{\hat{\bar{v}}} (\mathbf{\hat{u}}).
\label{eq:covderiv}
\end{equation}

It is a linear operator $\nabla \mathbf{\hat{v}}(x): T_{x}\mathcal{M}_{t} \to T_{x}\mathcal{M}_{t}$ and its Hilbert-Schmidt norm is given by
\begin{equation}
	\norm{\nabla \mathbf{\hat{v}}(x)}_{2}^{2} = \sum_{i=1}^{2} \norm{\nabla_{\mathbf{\hat{e}}_{i}} \mathbf{\hat{v}}(x)}^{2},
\label{eq:hsnorm}
\end{equation}
where $\{ \mathbf{\hat{e}}_{1}, \mathbf{\hat{e}}_{2} \}$ denotes the orthonormal basis of the tangent space $T_{x}\mathcal{M}_{t}$, cf.~\eqref{eq:onb}.
We stress that~\eqref{eq:hsnorm} is invariant with respect to the chosen parametrisation.

For each $t \in I$, we define the Sobolev space $H^{1}(\mathcal{M}_{t}, T\mathcal{M}_{t})$ as the completion of the space of vector fields $C^{\infty}(\mathcal{M}_{t}, T\mathcal{M}_{t})$ with respect to the norm
\begin{equation}
	\norm{\mathbf{\hat{v}}}_{H^{1}(\mathcal{M}_{t}, T\mathcal{M}_{t})}^{2} \coloneqq \int_{\mathcal{M}_{t}} \norm{\nabla \mathbf{\hat{v}}}_{2}^{2} \; d\mathcal{M}_{t},
\label{eq:sobolevnorm}
\end{equation}
where the surface integral is defined in \eqref{eq:surfintegral}.
Let us emphasise that~\eqref{eq:sobolevnorm} is indeed a norm whenever $\mathcal{M}_{t}$ is diffeomorphic to the 2-sphere.
The reason is that, by virtue of the Hairy Ball Theorem, there is no covariantly constant tangent vector field but $\mathbf{\hat{v}} = 0$, see e.g.~\cite[p. 125]{Hir94}.

Alternatively, one can define Sobolev spaces of vector fields such that each component of a vector field originates from a scalar Sobolev space.
See, for instance, Lef\`{e}vre and Baillet~\cite{LefBai08}.
On the 2-sphere, however, they are typically introduced by means of the spherical Laplace-Beltrami operator, see e.g.~\cite[Chapter~6.2]{Mic13} and Sec.~\ref{sec:spharm} for the scalar counterpart.
For a thorough treatment of Sobolev spaces on Riemannian manifolds we refer to the books~\cite{Heb96, Tri92}.

\subsection{Spherical Harmonics} \label{sec:spharm}

We denote by $\mathrm{Harm}_{n}$ the space of homogeneous harmonic polynomials of degree $n \in \mathbb{N}_{0}$ with their domain restricted to $\mathcal{S}^2$.
Its dimension is
\begin{equation*}
	\dim(\mathrm{Harm}_{n}) = 2n + 1.
\end{equation*}
An element $\tilde{Y}_{n} \in \mathrm{Harm}_{n}$, $n \in \mathbb{N}_{0}$, is called a \emph{(scalar) spherical harmonic}.
It is an infinitely often differentiable eigenfunction of the Laplace-Beltrami operator~$\Delta_{\mathcal{S}^2}$, defined in~\eqref{eq:laplacebeltrami}, with corresponding eigenvalue
\begin{equation*}
	\lambda_{n} = n(n + 1).
\end{equation*}
We refer to Theorem~5.6 and Lemma~5.8 in~\cite[Sec.~5.1]{Mic13} for detailed proofs of the previous statements.

The set
\begin{equation}
	\Big\lbrace \tilde{Y}_{nj}: n \in \mathbb{N}_{0}, j = 1, \dots, 2n + 1 \Big\rbrace
\label{eq:spharm}
\end{equation}
is a complete orthonormal system of $L^{2}(\mathcal{S}^2)$ with respect to the inner product $\langle \cdot, \cdot \rangle_{L^{2}(\mathcal{S}^2)}$ on $\mathcal{S}^2$.
In further consequence, for a function $\tilde{f} \in L^{2}(\mathcal{S}^2)$, we have the Fourier series representation
\begin{equation*}
	\tilde{f} = \sum_{n = 0}^{\infty} \sum_{j=1}^{2n + 1} \langle \tilde{f}, \tilde{Y}_{nj} \rangle_{L^{2}(\mathcal{S}^2)} \tilde{Y}_{nj},
\end{equation*}
Again, we refer to~\cite[Sec.~5.1]{Mic13} for the proofs, in particular to Theorem~5.25.
In the present article we employ \emph{fully normalised spherical harmonics}.
For the explicit construction see~\cite[Sec.~5.2]{Mic13}.

Moreover, the norm of $L^{2}(\mathcal{S}^2)$ is readily stated in terms of the coefficients in the above expansion via Parseval's identity
\begin{equation*}
	\norm{\tilde{f}}_{L^{2}(\mathcal{S}^2)}^{2} = \sum_{n, j} \langle \tilde{f}, \tilde{Y}_{nj} \rangle_{L^{2}(\mathcal{S}^2)}^{2}.
\end{equation*}

For an arbitrary real number $s \in \R$, we define the Sobolev space $H^{s}(\mathcal{S}^2)$ as the completion of all $C^{\infty}(\mathcal{S}^2)$ functions with respect to the norm
\begin{equation*}
	\norm{\tilde{f}}_{H^{s}(\mathcal{S}^2)}^{2} \coloneqq \norm{(\Delta_{\mathcal{S}^2} + 1)^{s/2} \tilde{f}}_{L^{2}(\mathcal{S}^2)}^{2} = \sum_{n, j} (\lambda_{n} + 1)^{s}  \langle \tilde{f}, \tilde{Y}_{nj} \rangle_{L^{2}(\mathcal{S}^2)}^{2}.
\end{equation*}
We stress that, by~\eqref{eq:laplacebeltrami}, $\Delta_{\mathcal{S}^{2}} \tilde{f} = - \Delta_{\R^{3}} \tilde{\bar{f}}$ and we have $\lambda_{n} \ge 0$ for all $n \in \mathbb{N}_{0}$ yielding a sound definition.
Accordingly, for $s \in \R$, we define the $H^{s}$ seminorm of order $s$ by
\begin{equation}
	\abs{\tilde{f}}_{H^{s}(\mathcal{S}^2)}^{2} \coloneqq \norm{\Delta_{\mathcal{S}^2}^{s/2} \tilde{f}}_{L^{2}(\mathcal{S}^2)}^{2} = \sum_{n, j} \lambda_{n}^{s}  \langle \tilde{f}, \tilde{Y}_{nj} \rangle_{L^{2}(\mathcal{S}^2)}^{2}.
\label{eq:sobolevseminorm}
\end{equation}

Now that the space $L^{2}(\mathcal{S}^2)$ is endowed with a basis, we can proceed to define an orthonormal system for square integrable tangent vector fields on the sphere.
This will immediately allow us to treat vector-valued problems consistently.

Let $\tilde{Y}_{n} \in \mathrm{Harm}_{n}$ be a scalar spherical harmonic of degree $n \in \mathbb{N}_{0}$.
Any vector field $\mathbf{\tilde{y}}: \mathcal{S}^2 \to \R^{3}$ that can be written in the form $\mathbf{\tilde{y}} = \mathbf{\tilde{y}}_{n}^{(i)}$, where
\begin{align*}
	\mathbf{\tilde{y}}_{n}^{(1)} &\coloneqq \tilde{Y}_{n} \mathbf{\tilde{N}}, \\
	\mathbf{\tilde{y}}_{n}^{(2)} &\coloneqq \nabla_{\mathcal{S}^2} \tilde{Y}_{n}, \\
	\mathbf{\tilde{y}}_{n}^{(3)} &\coloneqq \nabla_{\mathcal{S}^2} \tilde{Y}_{n} \times \mathbf{\tilde{N}},
\end{align*}
is called a \emph{vector spherical harmonic} of degree $n$ and type $i$, cf.~\cite[Definition~5.2]{FreSchr09}.
Recall that $\mathbf{\tilde{N}}$ is the outward unit normal to $\mathcal{S}^2$.

By definition, $\mathbf{\tilde{y}}_{n}^{(1)}$ is a normal field whereas $\mathbf{\tilde{y}}_{n}^{(2)}$ and $\mathbf{\tilde{y}}_{n}^{(3)}$ are tangent vector fields.
Consequently, the latter are called \emph{tangential vector spherical harmonics}.
Note that, by means of the Hairy-Ball Theorem, no tangential vector spherical harmonics of degree zero exist.

In further consequence, let us denote by $L^{2}(\mathcal{S}^2, T\mathcal{S}^2)$ the space of square integrable tangent vector fields on $\mathcal{S}^2$ equipped with the inner product
\begin{equation*}
	\langle \mathbf{\tilde{u}}, \mathbf{\tilde{v}} \rangle_{L^{2}(\mathcal{S}^2, T\mathcal{S}^2)} = \int_{\mathcal{S}^2} \mathbf{\tilde{u}} \cdot \mathbf{\tilde{v}} \; d\mathcal{S}^2.
\end{equation*}
Here, $d\mathcal{S}^2$ denotes the usual spherical surface measure, see also Lemma~\ref{lem:surfintegral}.

Since~\eqref{eq:spharm} is an orthonormal set for $L^{2}(\mathcal{S}^2)$, the set
\begin{equation}
	\Big\lbrace \mathbf{\tilde{y}}_{nj}^{(i)}: n \in \mathbb{N}, j = 1, \dots, 2n + 1, i = 2, 3 \Big\rbrace,
\label{eq:vspharm}
\end{equation}
is an orthonormal system for $L^{2}(\mathcal{S}^2, T\mathcal{S}^2)$, where we have defined
\begin{equation}
\begin{aligned}
	\mathbf{\tilde{y}}_{nj}^{(2)} & = \lambda_{n}^{-1/2} \nabla_{\mathcal{S}^2} \tilde{Y}_{nj}, \\
	\mathbf{\tilde{y}}_{nj}^{(3)} & = \lambda_{n}^{-1/2} \nabla_{\mathcal{S}^2} \tilde{Y}_{nj} \times \mathbf{\tilde{N}},
\end{aligned}
\label{eq:fullynormalisedvspharm}
\end{equation}
for orthonormalisation purpose, see~\cite[Sec.~5.2]{FreSchr09}.
Thus, every vector field $\mathbf{\tilde{v}} \in L^{2}(\mathcal{S}^2, T\mathcal{S}^2)$ can be written uniquely as
\begin{equation*}
	\mathbf{\tilde{v}} = \sum_{i=2}^{3} \sum_{n=1}^{\infty} \sum_{j=1}^{2n+1} \langle \mathbf{\tilde{v}}, \mathbf{\tilde{y}}_{nj}^{(i)} \rangle_{L^{2}(\mathcal{S}^2, T\mathcal{S}^2)} \mathbf{\tilde{y}}_{nj}^{(i)}.
\end{equation*}

We refer to the books~\cite{FreSchr09, Mic13} for further details on the subject.
Table~\ref{tab:notation} contains a summary of notation used in the sequel.

\begin{table}[t]
\begin{tabular}{ll}
	$\Omega$ & coordinate domain \\
	$I$ & time interval \\
	$\mathcal{S}^2$ & 2-sphere \\
	$\mathcal{M}$ & family of sphere-like surfaces $\mathcal{M}_{t}$ \\
	$T_{x}\mathcal{S}^2$ & tangent plane at $x \in \mathcal{S}^2$ \\
	$T_{y}\mathcal{M}_{t}$ & tangent plane at $y \in \mathcal{M}_{t}$ \\
	$\mathbf{\tilde{N}}, \mathbf{\hat{N}}$ & outward unit normals to $\mathcal{S}^2$ and $\mathcal{M}$ \\
	$\x$, $\y$ & parametrisations of $\mathcal{S}^2$ and $\mathcal{M}$ \\
	$D\x$, $D\y$ & gradient matrix of $\x$ and $\y$ \\
	$\{ \partial_{1} \x, \partial_{2} \x \}$ & basis for $T\mathcal{S}^2$ \\
	$\{ \partial_{1} \y, \partial_{2} \y \}$ & basis for $T\mathcal{M}$ \\
	$\{ \mathbf{\hat{e}}_{1}, \mathbf{\hat{e}}_{2} \}$ & orthonormal basis for $T\mathcal{M}_{t}$ \\
	$\mathbf{\hat{V}}$ & surface velocity of $\mathcal{M}$ \\
	$\tilde{\phi}, D\tilde{\phi}$ & smooth map from $\mathcal{S}^{2}$ to $\mathcal{M}$ and its differential \\
	$\tilde{f}$, $\hat{f}$, $f$ & scalar function on $\mathcal{S}^2$, $\mathcal{M}$, and their coordinate version \\
	$\nabla_{\mathcal{S}^2} \tilde{f}$, $\nabla_{\mathcal{M}} \hat{f}$ & surface gradient on $\mathcal{S}^2$ and $\mathcal{M}_{t}$ \\
	$\mathbf{\tilde{v}}$, $\mathbf{\hat{v}}$, $\mathbf{v}$ & tangent vector fields on $\mathcal{S}^2$, $\mathcal{M}$, and their coordinate version \\
	$\nabla_{\mathbf{\hat{u}}} \mathbf{\hat{v}}$ & covariant derivative of $\mathbf{\hat{v}}$ along direction $\mathbf{\hat{u}}$ on $\mathcal{M}_{t}$ \\
	$\tilde{Y}_{nj}$ & scalar spherical harmonic of degree $n$ and order $j$ \\
	$\mathbf{\tilde{y}}_{nj}^{(i)}$ & vector spherical harmonic of degree $n$, order $j$, and type $i$ \\
	$\mathbf{\hat{y}}_{nj}^{(i)}$ & pushforward of $\mathbf{\tilde{y}}_{nj}^{(i)}$ via the differential $D\tilde{\phi}$
\end{tabular}
\caption{Summary of notation used throughout the paper.}
\label{tab:notation}
\end{table}
\section{Optical Flow on Evolving Surfaces} \label{sec:model}

\subsection{Generalised Optical Flow}

Optical flow models are typically based on the assumption of \emph{constant brightness}.
Given a sequence of (planar) images
\begin{equation*}
	f: I \times \Omega \subset \R \times \R^{2} \to \R
\end{equation*}
such that $f \in C^{1}(I \times \Omega)$, it assumes that the intensity $f(t, \gamma(t, \xi))$ stays constant over time when moving along a trajectory $\gamma(\cdot, \xi): I \to \Omega$ starting at $\xi \in \Omega$.
In other words, in the planar setting, we have
\begin{equation*}
	\frac{d}{dt} f(t, \gamma(t, \xi)) = \partial_{t} f + \nabla_{\R^{2}} f \cdot \partial_{t} \gamma = 0,
\end{equation*}
which is termed \emph{optical flow equation} and must hold for all $\xi \in \Omega$ and all $t \in I$.
For the sake of consistency, we denote by $\partial_{t}$ the partial and by $d/dt$ the total derivative with respect to time.

It is possible to generalise the idea to a non-Euclidean setting where the image lives on a, potentially moving, manifold.
To this end, let us be given an evolving surface
\begin{equation*}
	\mathcal{M} \coloneqq \bigcup_{t \in I} \; \bigl( \{ t \} \times \mathcal{M}_{t} \bigr) \subset \R^{4}
\end{equation*}
specified by a parametrisation $\y: I \times \Omega \to \R^{3}$ as in \eqref{eq:param} together with a function $\hat{f}$, its domain being $\mathcal{M}$.
For a time $t \in I$,
\begin{equation*}
	\hat{f}(t, \cdot): \mathcal{M}_{t} \to \R
\end{equation*}
is then an image on the surface.
Adapting the above idea of constant brightness to the new setting requires that, along a smooth trajectory $\gamma(\cdot, x): t \mapsto \gamma(t, x) \in \mathcal{M}_{t}$ that starts at $x \in \mathcal{M}_{0}$ and always stays on the surface, we must have
\begin{equation}
	\hat{f}(t, \gamma(t, x)) = \hat{f}(0, x).
\label{eq:bca_gamma}
\end{equation}
However, in order to proceed as above one needs to define a meaningful derivative with respect to time.

One possibility, which is pursued in~\cite{KirLanSch13, KirLanSch15}, is to consider derivatives along trajectories following the moving surface.
Let $\y$ be as above and let $\partial_{t} \y = \mathbf{\hat{V}}$ be the surface velocity, its domain being $\bigcup_{t \in I} (\{ t \} \times \mathcal{M}_{t}) \subset \R^{4}$.
We emphasise that $\mathbf{\hat{V}}$ is in general not tangent to $\mathcal{M}_{t}$, $t \in I$, and hence in our notation denoted by a boldface capital letter.
Then,
\begin{equation}
	d_{t}^{\mathbf{\hat{V}}} \hat{f}(t_{0}, x_{0}) \coloneqq \frac{d}{dt} \hat{f}(t, \y(t, \xi)) \bigg\vert_{t = t_{0}}
\label{eq:timederiv}
\end{equation}
is the time derivative of $\hat{f}$ at $x_{0} = \y(t_{0}, \xi)$ along the parametrisation $\y(\cdot, \xi)$.
As a consequence, one can deduce that
\begin{equation*}
	d_{t}^{\mathbf{\hat{V}}} \hat{f} = d_{t}^{\mathbf{\hat{N}}} \hat{f} + \nabla_{\mathcal{M}} \hat{f} \cdot \mathbf{\hat{V}}
\end{equation*}
holds, where $d_{t}^{\mathbf{\hat{N}}} \hat{f}(t_{0}, x_{0})$ is the time derivative of $\hat{f}$ in normal direction.
It is defined analogously to~\eqref{eq:timederiv} albeit following a trajectory $\psi_{\mathbf{\hat{N}}}$ through $x_{0} \in \mathcal{M}_{t_{0}}$ for which $\partial_{t} \psi_{\mathbf{\hat{N}}}(t_{0}, x_{0})$ is orthogonal to $T_{x_{0}}\mathcal{M}_{t_{0}}$.

From that one can immediately formulate the above idea of constant brightness~\eqref{eq:bca_gamma} along $\gamma$.
To this end, we define by $\mathbf{\hat{M}} \coloneqq \partial_{t} \gamma$ the velocity of a point moving along the trajectory $\gamma$ and demand that
\begin{equation}
	d_{t}^{\mathbf{\hat{M}}} \hat{f} = d_{t}^{\mathbf{\hat{N}}} \hat{f} + \nabla_{\mathcal{M}} \hat{f} \cdot \mathbf{\hat{M}} = 0
\label{eq:bca}
\end{equation}
must hold.
Equation~\eqref{eq:bca} is a \emph{generalised optical flow equation}.
In Fig.~\ref{fig:sketch} we sketch the various trajectories through the evolving surface and their corresponding velocities.

Since we are, however, interested in a coordinate representation of $\gamma$, we define a family of trajectories $\beta: I \times \Omega \to \Omega$ such that
\begin{equation*}
	\gamma(t, \y(0, \xi)) = \y(t, \beta(t, \xi))
\end{equation*}
holds for all $t \in I$ and all $\xi \in \Omega$.
In other words, we want the composition of $\beta$ with $\y$, and $\gamma$ to coincide.
By taking the total derivative $d/dt$ on both sides of the above equation we get
\begin{equation*}
	\partial_{t} \gamma = \partial_{t} \y + \partial_{t} \beta^{i} \partial_{i} \y.
\end{equation*}
Let us denote $\mathbf{\hat{v}} \coloneqq \partial_{t} \beta^{i} \partial_{i} \y$ and recall that $\partial_{t} \y = \mathbf{\hat{V}}$ is the surface velocity.
The above relation states that the total velocity $\mathbf{\hat{M}} = \partial_{t} \gamma$ along a level line of constant intensity is
\begin{equation}
	\mathbf{\hat{M}} = \mathbf{\hat{V}} + \mathbf{\hat{v}}
\label{eq:totalmotion}
\end{equation}
and $\mathbf{\hat{v}}$ is a tangential velocity relative to the prescribed surface velocity $\mathbf{\hat{V}}$.

\begin{figure}[t]
	\begin{center}
	\begin{tikzpicture}
		\draw [thick, gray] (-3,0) to [out=30,in=150] (3,0);
		\node [right] at (3,0) {$\mathcal{M}_{t_0}$};		
		\draw [thick, gray] (-3,1) to [out=40,in=150] (4,1.8);
		\node [below] at (4,1.8) {$\mathcal{M}_{t_0+\Delta t}$};
		\draw [-stealth', gray, thick] (0,.9) -- (.91*1.1,.91*2.6);
		\node [left, gray] at (.91*1.1,.91*2.6) {$\mathbf{\hat{V}}$};
		\draw [-stealth', gray, thick] (0,.9) -- (0,2.4);
		\draw [-stealth', gray, thick] (0,.9) -- (.94*2.5,.94*2.4);
		\node [below, gray] at (.98*2.5,.94*2.4) {$\mathbf{\hat{M}}$};
		\draw [-stealth', gray, thick] (0,.9) -- (.94*2.5-.91*1.1, 0.9);
		\node [right, gray] at (0.94*2.5-0.91*1.1, 0.9) {$\mathbf{\hat{v}}$};
		\draw [-stealth', thick, dotted] (0,0) to [out=90,in=225] (0,.9) to [out=45,in=270] (1.3,1.7) to [out=90,in=225] (2.5,2.4) to [out=45,in=225] (1.25*2.5,1.25*2.4);
		\node [right] at (1.3*2.5,1.3*2.4) {$\gamma(\cdot, x_{0})$};
		\draw [-stealth', thick] (0.5,0) to [out=135,in=270] (0,.9) to [out=90,in=300] (-.5,1.25*2.4);
		\node [above] at (-.5,1.25*2.4) {$\psi_{\mathbf{\hat{N}}}$};
		\draw [-stealth', thick, dashed] (-0.5,0) to [out=45,in=225] (0,.9) to [out=45,in=270] (1.5,3.1);
		\node [above] at (1.5,3.1) {$\y(\cdot, \xi)$};
		\node at (0.4, 0.6) {$x_{0}$};
	\end{tikzpicture}
	\end{center}
	\caption{Illustration of trajectories through the evolving surface. Their corresponding velocities are shown in grey.}
	\label{fig:sketch}
\end{figure}
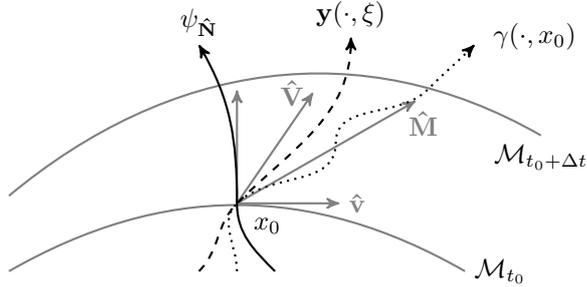

Solving the generalised optical flow equation~\eqref{eq:bca}, however, is inconvenient as $\psi_{\mathbf{\hat{N}}}$ and, in further consequence, $d_{t}^{\mathbf{\hat{N}}}$ is unknown or hard to estimate.
Nevertheless, one can relate~\eqref{eq:bca} and~\eqref{eq:totalmotion}, as shown in~\cite[Lemma~2]{KirLanSch15}, and arrive at the \emph{parametrised optical flow equation}
\begin{equation}
	d_{t}^{\mathbf{\hat{V}}} \hat{f} + \nabla_{\mathcal{M}} \hat{f} \cdot \mathbf{\hat{v}} = 0.
\label{eq:ofc}
\end{equation}
Solving for the optical flow then means finding a (time-varying) vector field $\mathbf{\hat{v}}$ that is tangent to the surface at all times and satisfies the above equation at every point $x \in \mathcal{M}$ on the moving surface.

Let us conclude this subsection with a remark that, in general, there exist infinitely many parametrisations $\y$ for a given evolving surface.
The actual surface velocity however might be unknown or cannot be estimated from the data, as it is the case in this work.
As a remedy we impose a surface velocity by choosing a natural surface parametrisation $\y$.
For a further discussion of this matter we refer to~\cite{BauGraKir15}.

Moreover, we again stress that the sought tangent vector field $\mathbf{\hat{v}}$ depends on the chosen $\y$, or equivalently on $\mathbf{\hat{V}}$, and should be interpreted with care.
The actual trajectories $\gamma$ though can be reconstructed by finding the integral curves of~\eqref{eq:totalmotion}.
For this precise approach we point the reader to~\cite{KirLanSch15}.

\subsection{Variational Formulation} \label{sec:variationalformulation}

The optical flow equation~\eqref{eq:ofc} derived above is underdetermined and, in general, a unique solution is not ensured.
A common technique to deal with non-uniqueness is Tikhonov regularisation, where one finds a minimiser of
\begin{equation*}
	\norm{d_{t}^{\mathbf{\hat{V}}} \hat{f} + \nabla_{\mathcal{M}} \hat{f} \cdot \mathbf{\hat{v}}}_{L^{2}(\mathcal{M})}^{2} + \alpha \mathcal{R}(\mathbf{\hat{v}}).
\end{equation*}
Here, $\mathcal{R}(\mathbf{\hat{v}})$ is a regularisation functional and $\alpha > 0$ a regularisation parameter, balancing the two terms.
The first term is typically referred to as \emph{data term} whereas the second is called \emph{smoothness term}.
The latter enforces uniqueness and incorporates prior knowledge about favoured solutions.

A common choice for $\mathcal{R}(\mathbf{\hat{v}})$ is the squared $H^{1}$ Sobolev seminorm, involving first derivatives with respect to space and time.
It favours spatial as well as temporal regularity and is of particular interest when trying to estimate trajectories of objects, albeit computationally more demanding.
See, for example~\cite{KirLanSch15, WeiSchn01b} and~\cite{BauGraKir15}.

Alternatively, one can omit temporal regularisation leading to a regulariser of the form
\begin{equation*}
	\mathcal{R}(\mathbf{\hat{v}}) = \int_{I} \norm{\mathbf{\hat{v}}(t, \cdot)}_{H^{1}(\mathcal{M}_{t}, T\mathcal{M}_{t})}^{2} \; dt,
\end{equation*}
which is equivalent to solving for each time instant separately.
It resembles the original formulation in~\cite{HorSchu81} and its extension to 2-manifolds~\cite{LefBai08}.
In the present article we follow this approach and attempt in finding the unique minimiser $\mathbf{\hat{v}} \in T\mathcal{M}_{t}$ of the energy
\begin{equation}
	\mathcal{E}_{\alpha}(\mathbf{\hat{v}}) \coloneqq \norm{d_{t}^{\mathbf{\hat{V}}} \hat{f} + \nabla_{\mathcal{M}} \hat{f} \cdot \mathbf{\hat{v}}}_{L^{2}(\mathcal{M}_{t})}^{2} + \alpha \norm{\mathbf{\hat{v}}}_{H^{1}(\mathcal{M}_{t}, T\mathcal{M}_{t})}^{2}
\label{eq:functional}
\end{equation}
for each time instant $t \in I$ separately.
Superimposing temporal regularisation however is straightforward, see~\cite[Sec.~2.2.2]{KirLanSch15}, but not considered here.
\section{Numerical Solution} \label{sec:numerics}

\subsection{Finite-dimensional Projection}

For the subsequent discussion we let $t \in I$ be arbitrary but fixed and assume to be given a parametrisation $\y(t, \cdot): \Omega \to \mathcal{M}_{t}$ as defined in~\eqref{eq:param}.
We defer the question of how to find it to Sec.~\ref{sec:surface}.
Moreover, for notational convenience, we relabel the set of tangential vector spherical harmonics~\eqref{eq:vspharm} using a single index letter $p \in \mathbb{N}$.
For instance, for the expansion of a tangent vector field on $\mathcal{S}^2$ we simply write $\mathbf{\tilde{u}} = \sum_{p} u_{p} \mathbf{\tilde{y}}_{p}$, where $u_{p} \in \R$ are the coefficients.

We intend to approximate the solution of the problem
\begin{equation*}
	\min_{\mathbf{\hat{v}} \in H^{1}(\mathcal{M}_{t}, T\mathcal{M}_{t})} \mathcal{E}_{\alpha}(\mathbf{\hat{v}})
\end{equation*}
in a finite-dimensional subspace $\mathcal{U} \subset H^{1}(\mathcal{M}_{t}, T\mathcal{M}_{t})$, where $\mathcal{E}_{\alpha}$ is defined in~\eqref{eq:functional}.
We define this space as
\begin{equation*}
	\mathcal{U} = \mathrm{span} \{ \mathbf{\hat{y}}_{p}: p \in J_{\mathcal{U}} \},
\end{equation*}
where $J_{\mathcal{U}} \subset \mathbb{N}$ is a finite index set and $\mathbf{\hat{y}}_{p}$ is the pushforward of a particular vector spherical harmonic $\mathbf{\tilde{y}}_{p}$ via the differential $D\tilde{\phi}$, see~\eqref{eq:differential}.
Figure~\ref{fig:cd} gives a descriptive view of the relation between the introduced spaces and tangent vector fields.

\begin{figure}[t]
	\centering
	\begin{tikzpicture}[every node/.style={on grid, scale=1, node distance=2.5 and 2.5}]
		\node (R2) 							{$\R^{2}$};
		\node (TS2)		[right=of R2]		{$T\mathcal{S}^{2}$};
		\node (TNt)		[right=of TS2]		{$T\mathcal{M}_{t}$};
		\node (Omega)	[below=of R2]		{$\Omega$};
		\node (S2)		[right=of Omega]	{$\mathcal{S}^2$};
		\node (Nt)		[right=of S2]		{$\mathcal{M}_{t}$};
		\draw [->] (R2) -- node [below] {$D\x$} (TS2);
		\draw [->] (TS2) -- node [below] {$D\tilde{\phi}(t, \cdot)$} (TNt);
		\draw [->] (Omega) -- node [above] {$\x$} (S2);
		\draw [->] (S2) -- node [above] {$\tilde{\phi}(t, \cdot)$} (Nt);
		\draw [->] (R2) to [bend left=30] node [above] {$D\y(t, \cdot)$} (TNt);
		\draw [->] (Omega) to [bend right=30] node [below] {$\y(t, \cdot)$} (Nt);
		\draw [->] (Omega) -- node [left] {$\mathbf{y}_{p}$} (R2);
		\draw [->] (S2) -- node [left] {$\mathbf{\tilde{y}}_{p}$} (TS2);
		\draw [->] (Nt) -- node [left] {$\mathbf{\hat{y}}_{p}$} (TNt);
	\end{tikzpicture}
	\caption{Commutative diagram relating spaces $\Omega$, $\mathcal{S}^{2}$, and $\mathcal{M}_{t}$, and tangent vector fields. We highlight that $\mathbf{y}_{p}$ is the coordinate representation, see Sec.~\ref{sec:diffgeo}, of a particular tangential vector spherical harmonic $\mathbf{\tilde{y}}_{p}$ and $\mathbf{\hat{y}}_{p}$ is its uniquely identified tangent vector field on $\mathcal{M}_{t}$.}
	\label{fig:cd}
\end{figure}
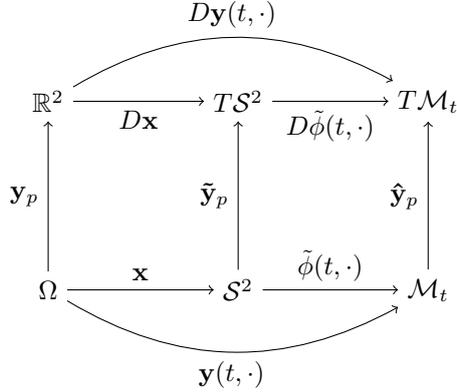

The sought vector field is then uniquely expanded as
\begin{equation}
	\mathbf{\hat{v}} = \sum_{p \in J_{\mathcal{U}}} v_{p} \mathbf{\hat{y}}_{p},
\label{eq:expansion}
\end{equation}
with $v_{p} \in \R$, $p \in J_{\mathcal{U}}$, being the coefficients.
Minimisation of functional~\eqref{eq:functional} results in a finite-dimensional optimisation problem over $\R^{\abs{J_{\mathcal{U}}}}$.
Plugging ansatz \eqref{eq:expansion} into \eqref{eq:functional} and writing out definition \eqref{eq:sobolevnorm} of the Sobolev $H^{1}(\mathcal{M}_{t}, T\mathcal{M}_{t})$ norm gives
\begin{equation}
	\mathcal{E}_{\alpha}(\mathbf{\hat{v}}) = \int_{\mathcal{M}_{t}} \Bigl( \bigl( d_{t}^{\mathbf{\hat{V}}} \hat{f} + \sum_{p \in J_{\mathcal{U}}} v_{p} ( \nabla_{\mathcal{M}} \hat{f} \cdot \mathbf{\hat{y}}_{p} ) \bigr)^{2} + \alpha \norm{\sum_{p \in J_{\mathcal{U}}} v_{p} \nabla \mathbf{\hat{y}}_{p}}_{2}^{2} \Bigr) \; d\mathcal{M}_{t}.
\label{eq:optproblem}
\end{equation}

By applying the definition of the Hilbert-Schmidt norm~\eqref{eq:hsnorm}, using linearity of the covariant derivative $\nabla_{\mathbf{\hat{u}}} \mathbf{\hat{v}}$ with respect to $\mathbf{\hat{v}}$, and the definition of the norm of $\R^{3}$ we obtain the representation
\begin{align*}
	\norm{\nabla \sum_{p \in J_{\mathcal{U}}} v_{p} \mathbf{\hat{y}}_{p}}_{2}^{2} & = \sum_{i=1}^{2} \norm{\sum_{p \in J_{\mathcal{U}}} v_{p} \nabla_{\mathbf{\hat{e}}_{i}} \mathbf{\hat{y}}_{p}}^{2} \\
	& = \sum_{i=1}^{2} \Bigl( \sum_{p \in J_{\mathcal{U}}} v_{p} \nabla_{\mathbf{\hat{e}}_{i}} \mathbf{\hat{y}}_{p} \cdot \sum_{q \in J_{\mathcal{U}}} v_{q} \nabla_{\mathbf{\hat{e}}_{i}} \mathbf{\hat{y}}_{q} \Bigr) \\
	& = \sum_{i=1}^{2} \sum_{p, q \in J_{\mathcal{U}}} v_{p} v_{q} \bigl( \nabla_{\mathbf{\hat{e}}_{i}} \mathbf{\hat{y}}_{p} \cdot \nabla_{\mathbf{\hat{e}}_{i}} \mathbf{\hat{y}}_{q} \bigr)
\end{align*}
for the regularisation term.

The optimality conditions for the discrete minimisation problem~\eqref{eq:optproblem} are obtained by taking $\partial \mathcal{E}_{\alpha} / \partial v_{p} = 0$, for all $p \in J_{\mathcal{U}}$, and are given by
\begin{equation}
\begin{aligned}
	\sum_{q \in J_{\mathcal{U}}} v_{q} \int_{\mathcal{M}_{t}} \Bigl( \bigl( \nabla_{\mathcal{M}} \hat{f} \cdot \mathbf{\hat{y}}_{p} \bigr) \bigl( \nabla_{\mathcal{M}} \hat{f} \cdot \mathbf{\hat{y}}_{q} \bigr) + \alpha \sum_{i=1}^{2} \bigl( \nabla_{\mathbf{\hat{e}}_{i}} \mathbf{\hat{y}}_{p} \cdot \nabla_{\mathbf{\hat{e}}_{i}} \mathbf{\hat{y}}_{q} \bigr) \Bigr) \; d\mathcal{M}_{t} \\
	= - \int_{\mathcal{M}_{t}} d_{t}^{\mathbf{\hat{V}}} \hat{f} \bigl( \nabla_{\mathcal{M}} \hat{f} \cdot \mathbf{\hat{y}}_{p} \bigr) \; d\mathcal{M}_{t}, \quad p \in J_{\mathcal{U}}.
\end{aligned}
\label{eq:optcond}
\end{equation}
In matrix form they read
\begin{equation*}
	(A + \alpha D)v = b,
\end{equation*}
where $v = (v_{1}, \dots, v_{\abs{J_{\mathcal{U}}}})^{\top} \in \R^{\abs{J_{\mathcal{U}}}}$ is the vector of unknowns.
The entries of the matrix $A = (a_{pq})_{pq}$ are
\begin{equation*}
	a_{pq} = \int_{\mathcal{M}_{t}} \bigl( \nabla_{\mathcal{M}} \hat{f} \cdot \mathbf{\hat{y}}_{p} \bigr) \bigl( \nabla_{\mathcal{M}} \hat{f} \cdot \mathbf{\hat{y}}_{q} \bigr) \; d\mathcal{M}_{t},
\end{equation*}
the entries of the matrix $D = (d_{pq})_{pq}$ associated with the regularisation term are given by
\begin{equation*}
	d_{pq} = \int_{\mathcal{M}_{t}} \sum_{i=1}^{2} \bigl( \nabla_{\mathbf{\hat{e}}_{i}} \mathbf{\hat{y}}_{p} \cdot \nabla_{\mathbf{\hat{e}}_{i}} \mathbf{\hat{y}}_{q} \bigr) \; d\mathcal{M}_{t},
\end{equation*}	
and the entries of the vector $b = (b_{p})_{p}$ are
\begin{equation*}
	b_{p} = - \int_{\mathcal{M}_{t}} d_{t}^{\mathbf{\hat{V}}} \hat{f} \bigl( \nabla_{\mathcal{M}} \hat{f} \cdot \mathbf{\hat{y}}_{p} \bigr) \; d\mathcal{M}_{t}.
\end{equation*}

\subsection{Rewriting the Optimality Conditions}

Even tough directly solving the derived optimality conditions~\eqref{eq:optcond} is perfectly legitimate, we take a different approach.
The goal of this section is to rewrite the optimality conditions in terms of quantities defined on the 2-sphere, thereby allowing a more general treatment.
On the one hand, we want to deal with all surfaces $\mathcal{M}_{t}$ for all $t \in I$ in a unified manner and, on the other hand, we aim at evaluating~\eqref{eq:optcond} numerically on the (approximated) sphere without having to deal with multiple charts, see e.g.~\cite{Car76}.

The following is a straight-forward generalisation of~\cite[Lemma 2]{KirLanSch15}.

\begin{lemma}
\label{lem:ofc}
Consider time $t \in I$ arbitrary but fixed.
Let $\mathbf{\tilde{v}} = v^{i} \partial_{i} \x$ and $\mathbf{\hat{v}} = v^{i} \partial_{i} \y$ be two tangent vector fields on $\mathcal{S}^2$ and $\mathcal{M}_{t}$, respectively, such that they are related via the differential~\eqref{eq:differential}.
Then, the parametrised optical flow equation~\eqref{eq:ofc} is equivalent to
\begin{equation*}
	\partial_{t} \tilde{f} + \nabla_{\mathcal{S}^2} \tilde{f} \cdot \mathbf{\tilde{v}} = 0.
\end{equation*}
\end{lemma}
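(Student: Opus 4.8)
The plan is to unravel the definition of $d_t^{\mathbf{\hat V}}\hat f$ in~\eqref{eq:timederiv} in coordinates and recognise that the result is precisely the coordinate version of the right-hand side. Since both vector fields share the same components $v^i$ by hypothesis (they are identified via $D\tilde\phi$, see~\eqref{eq:Dphiv}), I expect the directional-derivative terms to match immediately via~\eqref{eq:directionalderiv_S} and~\eqref{eq:directionalderiv_N}; the real content is showing that the two temporal derivatives agree, i.e. $d_t^{\mathbf{\hat V}}\hat f = \partial_t \tilde f$.

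First I would fix $\xi\in\Omega$ and $t\in I$ and expand $d_t^{\mathbf{\hat V}}\hat f(t,\y(t,\xi)) = \tfrac{d}{dt}\hat f(t,\y(t,\xi))$ using~\eqref{eq:f_on_manifold}, which tells us $\hat f(t,\y(t,\xi)) = f(t,\xi) = \tilde f(t,\x(\xi))$. The crucial observation is that along the parametrisation $\y(\cdot,\xi)$ the coordinate $\xi$ is held fixed, so $\tfrac{d}{dt}\hat f(t,\y(t,\xi)) = \tfrac{d}{dt} f(t,\xi) = \partial_t f(t,\xi)$, and since $\x$ does not depend on $t$, $\partial_t f(t,\xi) = \partial_t \tilde f(t,\x(\xi))$. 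This is where the specific choice of time derivative in~\eqref{eq:timederiv} — differentiation along the given parametrisation rather than along an arbitrary trajectory — does all the work: it is exactly the derivative that becomes a plain partial derivative in coordinates.

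Next I would handle the transport term. Starting from~\eqref{eq:directionalderiv_N}, $\nabla_{\mathcal M}\hat f\cdot\mathbf{\hat v} = v^i\partial_i f$, and from~\eqref{eq:directionalderiv_S}, $\nabla_{\mathcal{S}^2}\tilde f\cdot\mathbf{\tilde v} = v^i\partial_i f$ as well, so these two quantities are literally equal (again using that $\mathbf{\hat v}$ and $\mathbf{\tilde v}$ carry the same components $v^i$). Adding the two pieces, $d_t^{\mathbf{\hat V}}\hat f + \nabla_{\mathcal M}\hat f\cdot\mathbf{\hat v} = \partial_t\tilde f + \nabla_{\mathcal{S}^2}\tilde f\cdot\mathbf{\tilde v}$ pointwise, and hence~\eqref{eq:ofc} holds iff the claimed spherical equation holds.

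The main obstacle, such as it is, is purely bookkeeping: making sure the identification of $\mathbf{\hat v}$ with $\mathbf{\tilde v}$ via~\eqref{eq:differential}/\eqref{eq:Dphiv} is invoked correctly so that the components genuinely coincide, and being careful that $\x$ is time-independent while $\rho$ (and hence $\y$) is not — this is why $\partial_t f = \partial_t\tilde f\circ\x$ but the spatial relation between $f$ and $\tilde f$ is the nontrivial change of frame already recorded in Section~\ref{sec:diffgeo}. No genuinely hard estimate is involved; the lemma is essentially a chain-rule identity dressed in the notation set up earlier.
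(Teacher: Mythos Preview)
Your proposal is correct and mirrors the paper's own proof almost exactly: the paper also reduces $d_t^{\mathbf{\hat V}}\hat f$ to $\partial_t\tilde f$ via~\eqref{eq:f_on_manifold} and~\eqref{eq:timederiv}, and then matches the transport terms through the coordinate identities~\eqref{eq:directionalderiv_S} and~\eqref{eq:directionalderiv_N}. Your explicit remark that $\x$ is time-independent (so that $\partial_t f=\partial_t\tilde f\circ\x$) is the one point the paper leaves implicit, but otherwise the arguments coincide.
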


\begin{proof}
According to the definitions~\eqref{eq:timederiv} and~\eqref{eq:f_on_manifold}, we have
\begin{align*}
	d_{t}^{\mathbf{\hat{V}}} \hat{f}(t, \y(t, \xi)) & = \frac{d}{dt} \hat{f}(t, \y(t, \xi)) \\
	& = \frac{d}{dt} \tilde{f}(t, \x(\xi)) \\
	& = \partial_{t} \tilde{f}(t, \x(\xi))
\end{align*}
and it remains to show the identity
\begin{equation*}
	\nabla_{\mathcal{M}} \hat{f} \cdot \mathbf{\hat{v}} = \nabla_{\mathcal{S}^2} \tilde{f} \cdot \mathbf{\tilde{v}},
\end{equation*}
where we have omitted the arguments $(t, \y(t, \xi))$ on the left and $(t, \x(\xi))$ on the right hand side, respectively.
It follows directly from the coordinate representation of the directional derivatives~\eqref{eq:directionalderiv_S} and~\eqref{eq:directionalderiv_N}.
\end{proof}

In order to give coordinate expressions for the terms in~\eqref{eq:optcond} arising from the regularisation term we locally choose an orthonormal frame $\{ \mathbf{\hat{e}}_{1}(t, \xi), \mathbf{\hat{e}}_{2}(t, \xi) \}$ of the tangent space, see~\eqref{eq:onb}.
As a consequence, the sought tangent vector field $\mathbf{\hat{v}}$ can be written as
\begin{equation}
	\mathbf{\hat{v}} = w^{i} \mathbf{\hat{e}}_{i}
\label{eq:onbcoeff}
\end{equation}
for some components $(w^{1}, w^{2})^{\top}$.
The reason for expressing the unknown in an orthonormal frame, rather than the coordinate frame, is to simplify matters with regard to the Hilbert-Schmidt norm~\eqref{eq:hsnorm} of the covariant derivative.

However, the chosen Galerkin method expands the unknown $\mathbf{\hat{v}}$ in terms of the pushfoward of vector fields which are defined on the 2-sphere, cf.~\eqref{eq:expansion}.
We necessarily need to establish the relation between the intended form~\eqref{eq:onbcoeff} and the expression in terms of the coordinate frame.

\begin{lemma}
Again, let $t \in I$ be arbitrary but fixed and let $\mathbf{\tilde{u}} = u^{i} \partial_{i} \x$ be a tangent vector field on $\mathcal{S}^2$.
Then, for a tangent vector field $\mathbf{\hat{v}} = w^{i} \mathbf{\hat{e}}_{i}$ on $\mathcal{M}_{t}$, we have $\mathbf{\hat{v}} = D\tilde{\phi}(\mathbf{\tilde{u}})$ if and only if $w^{i} = (\alpha^{-1})_{\ell}^{i} u^{\ell}$.
\end{lemma}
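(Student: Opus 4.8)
The plan is to unwind the two different coordinate systems in play. On the one hand $\mathbf{\tilde{u}}$ is expanded in the coordinate basis $\{\partial_1 \x, \partial_2 \x\}$ of $T\mathcal{S}^2$, so by~\eqref{eq:Dphiv} its pushforward $D\tilde{\phi}(\mathbf{\tilde{u}})$ is $u^i \partial_i \y$ in the coordinate basis of $T\mathcal{M}_t$. On the other hand $\mathbf{\hat{v}}$ is written in the orthonormal frame $\{\mathbf{\hat{e}}_1, \mathbf{\hat{e}}_2\}$, which by~\eqref{eq:onb} is related to the coordinate frame via $\mathbf{\hat{e}}_i = \alpha_i^j \partial_j \y$. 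So the task reduces to comparing two expressions for the same tangent vector of $\mathcal{M}_t$ once both are written in the coordinate frame $\{\partial_1 \y, \partial_2 \y\}$.

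First I would substitute~\eqref{eq:onb} into $\mathbf{\hat{v}} = w^i \mathbf{\hat{e}}_i$ to get $\mathbf{\hat{v}} = w^i \alpha_i^j \partial_j \y$. Next I would invoke~\eqref{eq:Dphiv} to write $D\tilde{\phi}(\mathbf{\tilde{u}}) = u^i \partial_i \y$. Since $\{\partial_1 \y, \partial_2 \y\}$ is a basis of $T_{\y(t,\xi)}\mathcal{M}_t$ (stated around~\eqref{eq:tangentbasis}), the equality $\mathbf{\hat{v}} = D\tilde{\phi}(\mathbf{\tilde{u}})$ holds if and only if the coefficients match, i.e.\ $w^i \alpha_i^\ell = u^\ell$ for $\ell = 1, 2$ (after relabelling the summation index). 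This is a linear system relating the vectors $(w^1, w^2)$ and $(u^1, u^2)$ via the matrix $(\alpha_i^\ell)$.

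It remains to solve for $w$. The matrix $(\alpha_i^j)$ coming from the Gram--Schmidt process is invertible --- it is a change of basis between the coordinate frame and an orthonormal frame, both bases of the two-dimensional space $T_{\y(t,\xi)}\mathcal{M}_t$ --- so I would multiply by its inverse $(\alpha^{-1})$ on the appropriate side. Being careful with the index placement in the Einstein convention, $w^i \alpha_i^\ell = u^\ell$ rearranges to $w^i = (\alpha^{-1})_\ell^i u^\ell$, which is exactly the claimed relation. Both implications follow at once since each manipulation is reversible.

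I do not expect a genuine obstacle here; the only point requiring mild care is bookkeeping of the upper/lower indices in the Einstein summation convention and confirming that $(\alpha_i^j)$ is invertible, which is immediate since Gram--Schmidt on a basis produces another basis of the same space. I would therefore keep the proof short: substitute~\eqref{eq:onb}, compare with~\eqref{eq:Dphiv}, equate coordinate-frame coefficients, and invert.
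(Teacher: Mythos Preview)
Your argument is correct and essentially the same as the paper's: both rest on~\eqref{eq:onb} and~\eqref{eq:Dphiv} together with the invertibility of the Gram--Schmidt change-of-basis matrix $(\alpha_i^j)$. The only cosmetic difference is that for the forward implication the paper extracts the coefficients $w^i$ by taking inner products with the orthonormal vectors $\mathbf{\hat{e}}_i$, whereas you compare coefficients directly in the coordinate frame $\{\partial_1\y,\partial_2\y\}$ and then invert; both routes yield $w^i=(\alpha^{-1})_\ell^i u^\ell$ immediately.
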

\begin{proof}
$(\Leftarrow)$ First, note that $\alpha_{i}^{j} (\alpha^{-1})_{\ell}^{i} = \delta_{j\ell}$.
Expanding $\mathbf{\hat{v}}$ gives
\begin{equation*}
	\mathbf{\hat{v}} = w^{i} \mathbf{\hat{e}}_{i} = w^{i} \alpha_{i}^{j} \partial_{j} \y = (\alpha^{-1})_{\ell}^{i} u^{\ell} \alpha_{i}^{j} \partial_{j} \y = u^{j} \partial_{j} \y = D\tilde{\phi}(\mathbf{\tilde{u}}),
\end{equation*}
where we have used~\eqref{eq:Dphiv}, cf. also Fig.~\ref{fig:cd}.

$(\Rightarrow)$ Suppose $\mathbf{\hat{v}} = D\tilde{\phi}(\mathbf{\tilde{u}})$.
Let us take the inner product with $\mathbf{\hat{e}}_{i}$ on both sides.
For the left hand side we have
\begin{equation*}
	\mathbf{\hat{v}} \cdot \mathbf{\hat{e}}_{i} = w^{j} \mathbf{\hat{e}}_{j} \cdot \mathbf{\hat{e}}_{i} = w^{j} \delta_{ji} = w^{i}.
\end{equation*}
For the right hand side we first observe that, by inversion of the matrix $\alpha$ in~\eqref{eq:onb}, it holds that $\partial_{j} \y = (\alpha^{-1})_{j}^{\ell} \mathbf{\hat{e}}_{\ell}$.
Then,
\begin{align*}
	D\tilde{\phi}(\mathbf{\tilde{u}}) \cdot \mathbf{\hat{e}}_{i} & = u^{\ell} \partial_{\ell} \y \cdot \mathbf{\hat{e}}_{i} \\
	& = u^{\ell} (\alpha^{-1})_{\ell}^{j} \mathbf{\hat{e}}_{j} \cdot \mathbf{\hat{e}}_{i} \\
	& = u^{\ell} (\alpha^{-1})_{\ell}^{j} \delta_{ji} \\
	& = u^{\ell} (\alpha^{-1})_{\ell}^{i}
\end{align*}
and we conclude that $w^{i} = u^{\ell} (\alpha^{-1})_{\ell}^{i}$ as required.
\end{proof}

With the above relation at hand we obtain the following form.
\begin{lemma}
\label{lem:ip}
Let $t \in I$ and let $\mathbf{\hat{u}} = u^{i} \partial_{i} \y$ and $\mathbf{\hat{v}} = v^{i} \partial_{i} \y$ be two tangent vector fields on $\mathcal{M}_{t}$.
Then, we have
\begin{equation*}
	\nabla_{\mathbf{\hat{e}}_{i}} \mathbf{\hat{u}} \cdot \nabla_{\mathbf{\hat{e}}_{i}} \mathbf{\hat{v}} = \sum_{j=1}^{2} D_{i} u^{j} D_{i} v^{j},
\end{equation*}
where
\begin{equation*}
	D_{i} u^{j} \coloneqq \alpha_{i}^{k} \partial_{k} \bigl( (\alpha^{-1})_{\ell}^{j} u^{\ell} \bigr) + (\alpha^{-1})_{\ell}^{k} u^{\ell} \hat{\Gamma}_{ik}^{j}, \quad i, j = \{1, 2\},
\end{equation*}
and $D_{i} v^{j}$ are defined accordingly.
\end{lemma}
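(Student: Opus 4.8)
The strategy is to push everything into the orthonormal frame $\{\mathbf{\hat{e}}_{1},\mathbf{\hat{e}}_{2}\}$ of $T_{\y(t,\xi)}\mathcal{M}_{t}$, in which the induced metric is the identity, so that the $\R^{3}$ inner product of the two covariant derivatives collapses to a plain coordinate sum of their components. Fix $t\in I$ and work in a chart. First I would re-express $\mathbf{\hat{u}}=u^{i}\partial_{i}\y$ and $\mathbf{\hat{v}}=v^{i}\partial_{i}\y$ in the orthonormal frame: inverting~\eqref{eq:onb} gives $\partial_{j}\y=(\alpha^{-1})_{j}^{\ell}\mathbf{\hat{e}}_{\ell}$ (as already used in the proof of the preceding lemma), whence $\mathbf{\hat{u}}=w_{u}^{j}\mathbf{\hat{e}}_{j}$ with $w_{u}^{j}=(\alpha^{-1})_{\ell}^{j}u^{\ell}$, and likewise $\mathbf{\hat{v}}=w_{v}^{j}\mathbf{\hat{e}}_{j}$ with $w_{v}^{j}=(\alpha^{-1})_{\ell}^{j}v^{\ell}$; these are exactly the orthonormal components supplied by the preceding lemma.

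Next I would differentiate. The covariant derivative~\eqref{eq:covderiv}, being the tangential projection $\mathrm{P}_{\mathcal{M}}$ of the ambient directional derivative, inherits both the Leibniz rule and $C^{\infty}(\mathcal{M}_{t})$-linearity in its direction argument, and its values are tangent to $\mathcal{M}_{t}$. Writing $\nabla_{\mathbf{\hat{e}}_{i}}\mathbf{\hat{e}}_{k}=\hat{\Gamma}_{ik}^{j}\mathbf{\hat{e}}_{j}$ for the connection coefficients of $\mathcal{M}_{t}$ relative to this orthonormal frame, the Leibniz rule gives $\nabla_{\mathbf{\hat{e}}_{i}}\mathbf{\hat{v}}=\bigl(\mathbf{\hat{e}}_{i}(w_{v}^{j})+w_{v}^{k}\hat{\Gamma}_{ik}^{j}\bigr)\mathbf{\hat{e}}_{j}$. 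Since $\mathbf{\hat{e}}_{i}=\alpha_{i}^{k}\partial_{k}\y$ acts on a scalar by $\mathbf{\hat{e}}_{i}(\cdot)=\alpha_{i}^{k}\partial_{k}(\cdot)$, the $j$-th bracketed coefficient equals $\alpha_{i}^{k}\partial_{k}\bigl((\alpha^{-1})_{\ell}^{j}v^{\ell}\bigr)+(\alpha^{-1})_{\ell}^{k}v^{\ell}\hat{\Gamma}_{ik}^{j}=D_{i}v^{j}$, so $\nabla_{\mathbf{\hat{e}}_{i}}\mathbf{\hat{v}}=D_{i}v^{j}\,\mathbf{\hat{e}}_{j}$ and, identically, $\nabla_{\mathbf{\hat{e}}_{i}}\mathbf{\hat{u}}=D_{i}u^{j}\,\mathbf{\hat{e}}_{j}$. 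Taking the $\R^{3}$ inner product and using $\mathbf{\hat{e}}_{j}\cdot\mathbf{\hat{e}}_{k}=\delta_{jk}$ then yields $\nabla_{\mathbf{\hat{e}}_{i}}\mathbf{\hat{u}}\cdot\nabla_{\mathbf{\hat{e}}_{i}}\mathbf{\hat{v}}=\sum_{j=1}^{2}D_{i}u^{j}D_{i}v^{j}$, the asserted identity for each fixed $i$; summing over $i$ recovers the form entering~\eqref{eq:hsnorm}.

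There is no real obstacle here: the computation is routine bookkeeping with the Gram--Schmidt coefficients $\alpha_{i}^{j}$ and their inverse. The one point that deserves care is the set-up rather than the algebra, namely to invoke the preceding lemma so as to be sure that $(\alpha^{-1})_{\ell}^{j}u^{\ell}$ are genuinely the components of $\mathbf{\hat{u}}$ in the orthonormal frame, and to read $\hat{\Gamma}_{ik}^{j}$ as the connection coefficients with respect to $\{\mathbf{\hat{e}}_{1},\mathbf{\hat{e}}_{2}\}$ rather than to the coordinate frame $\{\partial_{1}\y,\partial_{2}\y\}$; it is precisely this choice of an orthonormal frame that makes the Riemannian metric $D\y^{\top}D\y$ drop out of the final inner product and turns it into the Euclidean dot product of the component vectors $(D_{i}u^{1},D_{i}u^{2})$ and $(D_{i}v^{1},D_{i}v^{2})$.
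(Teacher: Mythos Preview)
Your proof is correct and follows essentially the same route as the paper's: express the fields in the orthonormal frame via the preceding lemma, apply the Leibniz rule together with $C^{\infty}$-linearity of the covariant derivative in its direction argument and the definition of $\hat{\Gamma}_{ik}^{j}$, and then use $\mathbf{\hat{e}}_{j}\cdot\mathbf{\hat{e}}_{k}=\delta_{jk}$ to collapse the inner product. Your index bookkeeping is in fact slightly cleaner than the paper's, which writes $w^{j}\hat{\Gamma}_{ik}^{j}$ where $w^{k}\hat{\Gamma}_{ik}^{j}$ is meant.
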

$\hat{\Gamma}_{ik}^{j}$ denote the Christoffel symbols with regard to the orthonormal frame $\{ \mathbf{\hat{e}}_{1}, \mathbf{\hat{e}}_{2} \}$ and are defined as
\begin{equation}
	\nabla_{\mathbf{\hat{e}}_{i}} \mathbf{\hat{e}}_{k} = \hat{\Gamma}_{ik}^{j} \mathbf{\hat{e}}_{j}.
\label{eq:gamma}
\end{equation}
We refer to~\cite[Lemma~3]{KirLanSch15} for their derivation.

\begin{proof}
First let us show that, for $\mathbf{\hat{u}} = w^{j} \mathbf{\hat{e}}_{j}$ as in~\eqref{eq:onbcoeff}, it holds that
\begin{equation*}
	\nabla_{\mathbf{\hat{e}}_{i}} \mathbf{\hat{u}} = D_{i} u^{j} \mathbf{\hat{e}}_{j}.
\end{equation*}			
By the product rule for the covariant derivative~\eqref{eq:covderiv},
\begin{equation}
	\nabla_{\mathbf{\hat{e}}_{i}} w^{j} \mathbf{\hat{e}}_{j} = \mathbf{\hat{e}}_{j} \nabla_{\mathbf{\hat{e}}_{i}} w^{j} + w^{j} \nabla_{\mathbf{\hat{e}}_{i}} \mathbf{\hat{e}}_{j}.
\label{eq:covderivprodrule}
\end{equation}
Consider the first term of the sum and let $\mathbf{\hat{e}}_{i}$ be represented in the coordinate basis as in~\eqref{eq:onb}.
Then,
\begin{equation*}
	\nabla_{\mathbf{\hat{e}}_{i}} w^{j} = \nabla_{\alpha_{i}^{k} \partial_{k} \y} w^{j}.
\end{equation*}
Linearity of the lower argument of the covariant derivative with respect to $C^{\infty}(\mathcal{M}_{t})$ functions, cf.~\eqref{eq:directionalderiv_N}, yields
\begin{equation*}
	\nabla_{\alpha_{i}^{k} \partial_{k} \y} w^{j} = \alpha_{i}^{k} \nabla_{\partial_{k} \y} w^{j}
\end{equation*}
and by realising that $\nabla_{\partial_{k} \y} w^{j}$ is just the directional derivative~\eqref{eq:directionalderiv_N} along $\partial_{k} \y$ we obtain
\begin{equation*}
	\alpha_{i}^{k} \nabla_{\partial_{k} \y} w^{j} = \alpha_{i}^{k} \partial_{k} w^{j}.
\end{equation*}

Moreover, in the second term of the sum in~\eqref{eq:covderivprodrule} we use definition~\eqref{eq:gamma}.
Thus, by summing up all terms in~\eqref{eq:covderivprodrule} we obtain
\begin{equation*}
	\nabla_{\mathbf{\hat{e}}_{i}} w^{j} \mathbf{\hat{e}}_{j} = \bigl( \alpha_{i}^{k} \partial_{k} w^{j} + w^{j} \hat{\Gamma}_{ik}^{j} \bigr) \mathbf{\hat{e}}_{j}.
\end{equation*}
Applying the previous lemma gives coefficients $D_{i} u^{j}$ and $D_{i} v^{j}$ in the intended form.
Finally, it remains to observe that
\begin{align*}
	\nabla_{\mathbf{\hat{e}}_{i}} \mathbf{\hat{u}} \cdot \nabla_{\mathbf{\hat{e}}_{i}} \mathbf{\hat{v}} & = D_{i} u^{j} \mathbf{\hat{e}}_{j} \cdot D_{i} v^{j} \mathbf{\hat{e}}_{j} \\
	& = \sum_{j=1}^{2} D_{i} u^{j} D_{i} v^{j},
\end{align*}
since by definition $\mathbf{\hat{e}}_{i} \cdot \mathbf{\hat{e}}_{j} = \delta_{ij}$.
\end{proof}

Finally, by combining Lemmas~\ref{lem:surfintegral}, \ref{lem:ofc}, and~\ref{lem:ip} we are able to express the optimality conditions~\eqref{eq:optcond} in terms of integrals on the 2-sphere.
Thus, we arrive at the optimality conditions
\begin{equation}
\begin{aligned}
	\sum_{q \in J_{\mathcal{U}}} v_{q} \int_{\mathcal{S}^2} \Bigl( \bigl( \nabla_{\mathcal{S}^2} \tilde{f} \cdot \mathbf{\tilde{y}}_{p} \bigr) \bigl( \nabla_{\mathcal{S}^2} \tilde{f} \cdot \mathbf{\tilde{y}}_{q} \bigr) + \alpha \sum_{i,j=1}^{2} D_{i} y_{p}^{j} D_{i} y_{q}^{j} \Bigr) \; \tilde{\rho} \sqrt{\norm{\nabla_{\mathcal{S}^2} \tilde{\rho}}^{2} + \tilde{\rho}^{2}} \, d\mathcal{S}^2 \\
	= - \int_{\mathcal{S}^2} \partial_{t} \tilde{f} \bigl( \nabla_{\mathcal{S}^2} \tilde{f} \cdot \mathbf{\tilde{y}}_{p} \bigr) \; \tilde{\rho} \sqrt{\norm{\nabla_{\mathcal{S}^2} \tilde{\rho}}^{2} + \tilde{\rho}^{2}} \, d\mathcal{S}^2, \quad p \in J_{\mathcal{U}},
\end{aligned}
\label{eq:optcondpullback}
\end{equation}
where $\tilde{\rho} \sqrt{\norm{\nabla_{\mathcal{S}^2} \tilde{\rho}}^{2} + \tilde{\rho}^{2}}$ arises from the Jacobian~\eqref{eq:jacobian}, see also Lemma~\ref{lem:surfintegral}.

The entries of the matrices $A$, $D$ and of the vector $b$, respectively, are then given by
\begin{equation}
	a_{pq} = \int_{\mathcal{S}^2} \bigl( \nabla_{\mathcal{S}^2} \tilde{f} \cdot \mathbf{\tilde{y}}_{p}) (\nabla_{\mathcal{S}^2} \tilde{f} \cdot \mathbf{\tilde{y}}_{q} \bigr) \; \tilde{\rho} \sqrt{\norm{\nabla_{\mathcal{S}^2} \tilde{\rho}}^{2} + \tilde{\rho}^{2}} \, d\mathcal{S}^2,
\label{eq:a}
\end{equation}
\begin{equation}
	d_{pq} = \int_{\mathcal{S}^2} \sum_{i,j=1}^{2} D_{i} y_{p}^{j} D_{i} y_{q}^{j} \; \tilde{\rho} \sqrt{\norm{\nabla_{\mathcal{S}^2} \tilde{\rho}}^{2} + \tilde{\rho}^{2}} \, d\mathcal{S}^2,
\label{eq:d}
\end{equation}	
and
\begin{equation}
	b_{p} = - \int_{\mathcal{S}^2} \partial_{t} \tilde{f} \bigl( \nabla_{\mathcal{S}^2} \tilde{f} \cdot \mathbf{\tilde{y}}_{p} \bigr) \; \tilde{\rho} \sqrt{\norm{\nabla_{\mathcal{S}^2} \tilde{\rho}}^{2} + \tilde{\rho}^{2}} \, d\mathcal{S}^2.
\label{eq:b}
\end{equation}

\subsection{Surface Parametrisation} \label{sec:surface}

In order to actually compute the above optimality conditions it remains to determine the radius $\tilde{\rho}: I \times \mathcal{S}^2 \to (0, \infty)$ in the presumed parametrisation~\eqref{eq:param}.
Again, we continue the discussion for one particular but fixed time $t \in I$ and drop the argument whenever convenient.

Estimating $\tilde{\rho}(t, \cdot): \mathcal{S}^2 \to (0, \infty)$ is closely related to surface interpolation from scattered data.
Given noisy data $\tilde{\rho}^{\delta}$ and a parameter $\beta > 0$, it amounts to finding the unique minimiser of the functional
\begin{equation}
	\mathcal{F}_{\beta}(\tilde{\rho}) \coloneqq \norm{\tilde{\rho} - \tilde{\rho}^{\delta}}_{L^{2}(\mathcal{S}^2)}^{2} + \beta \abs{\tilde{\rho}}_{H^{s}(\mathcal{S}^2)}^{2},
\label{eq:surffunctional}
\end{equation}
where $s > 0$ is a sufficiently large real number, cf. definition~\eqref{eq:sobolevseminorm}.
The first term penalises deviation from the observed data whereas the second term enforces spatial regularity of the solution.

In practice, however, $N > 0$ evaluations $\{ \tilde{\rho}^{\delta}(x_{i}): x_{i} \in \mathcal{S}^2 \}_{i=1}^{N}$ are given at pairwise distinct points on the 2-sphere.
In our particular application these correspond to taking the norm in $\R^{3}$ of pairwise distinct sampling points lying on the sphere-like surface $\mathcal{M}_{t}$:
\begin{equation}
	\tilde{\rho}^{\delta}(\bar{x}_{i}) = \norm{x_{i}}, \; x_{i} \in \R^{3} \setminus \{0\}, \; i=1, \dots, N,
\label{eq:pointeval}
\end{equation}
where $\bar{x}_{i} = x_{i} / \norm{x_{i}}$ is the radial projection onto $\mathcal{S}^2$.
We again point the reader to Fig.~\ref{fig:surfaces}.

Furthermore, before turning to the numerical solution of~\eqref{eq:surffunctional}, let us briefly discuss the regularity requirements.
In~\cite{BauGraKir15}, the authors demand twice continuous differentiability for both the manifold $\mathcal{M}_{t}$ and the map $\y(t, \cdot)$ to obtain well-posedness of the optical flow problem.
By definition of the parametrisation~\eqref{eq:param} we require that $\tilde{\rho}(t, \cdot) \in C^{2}(\mathcal{S}^2)$.
As a consequence of Theorem~2.7 in~\cite[Chapter~2.6]{Heb99} regarding Sobolev embeddings, the space $H^{s}(\mathcal{S}^2)$ for $s > 3$ is the appropriate choice, i.e. $H^{s}(\mathcal{S}^2) \subset C^{2}(\mathcal{S}^2)$.

Numerically, we approximate the solution of the problem
\begin{equation*}
	\min_{\tilde{\rho} \in H^{s}(\mathcal{S}^2)} \mathcal{F}_{\beta}(\tilde{\rho})
\end{equation*}
by considering a finite-dimensional subspace $\mathcal{Q} \subset H^{s}(\mathcal{S}^2)$ and point evaluations~\eqref{eq:pointeval}.
In contrast to above, the space
\begin{equation*}
	\mathcal{Q} = \mathrm{span} \{ \tilde{Y}_{p}: p \in J_{\mathcal{Q}} \},
\end{equation*}
where $J_{\mathcal{Q}} \subset \mathbb{N}_{0}$ again is an index set, is spanned by scalar spherical harmonics.
The sought function is expanded as
\begin{equation*}
	\tilde{\rho} = \sum_{p \in J_{\mathcal{Q}}} \rho_{p} \tilde{Y}_{p},
\end{equation*}
where the unknowns are the coefficients $\rho_{p} \in \R$, for $p \in J_{\mathcal{Q}}$.
Plugging into~\eqref{eq:surffunctional}, applying definition~\eqref{eq:sobolevseminorm}, and taking $\partial \mathcal{F} / \partial \rho_{p}$, for all $p \in J_{\mathcal{Q}}$, gives the optimality conditions
\begin{equation}
	\sum_{q \in J_{\mathcal{Q}}} \rho_{q} \Bigl( \sum_{i=1}^{N} \tilde{Y}_{p}(\bar{x}_{i}) \tilde{Y}_{q}(\bar{x}_{i}) \Bigr) + \beta \lambda_{p}^{s} \rho_{p} = \sum_{i=1}^{N} \norm{x_{i}} \tilde{Y}_{p}(\bar{x}_{i}), \quad p \in J_{\mathcal{Q}}.
\label{eq:surfoptcond}
\end{equation}

Denoting by $\varrho = (\rho_{1}, \dots, \rho_{\abs{J_{\mathcal{Q}}}})^{\top} \in \R^{\abs{J_{\mathcal{Q}}}}$ the vector of unknown coefficients, the equations~\eqref{eq:surfoptcond} can be written in matrix-vector form as
\begin{equation*}
	(L + \beta M)\varrho = c,
\end{equation*}
The entries of the matrix $L = (l_{pq})_{pq}$ are
\begin{equation*}
	l_{pq} = \sum_{i=1}^{N} \tilde{Y}_{p}(\bar{x}_{i}) \tilde{Y}_{q}(\bar{x}_{i}),
\end{equation*}
the matrix $M = \mathrm{diag}(\lambda_{1}^{s}, \dots, \lambda_{\abs{J_{\mathcal{Q}}}}^{s})$ is a diagonal matrix, and
\begin{equation*}
	c_{p} = \sum_{i=1}^{N} \norm{x_{i}} \tilde{Y}_{p}(\bar{x}_{i}).
\end{equation*}

\subsection{Numerical Approximation} \label{sec:numericalapprox}

Let us finally discuss the numerical solution of the optimality conditions \eqref{eq:optcondpullback}.
In particular, one needs to (approximately) evaluate the integrals \eqref{eq:a}, \eqref{eq:d}, and \eqref{eq:b}.
Even though integrals on the 2-sphere can be computed exactly and quadrature rules exist up to a certain degree, see e.g.~\cite{AtkHan12, HesSloWom10}, we instead prefer to use a triangulation together with an appropriate quadrature.
The reason is that numerical quadrature on the sphere would have to be of rather high degree to reproduce small details and features of the data, contrary to the chosen quadrature, which can easily be refined up to the desired precision.
Finally let us mention that, for a more accurate evaluation of the integrals, one can introduce an intermediate (radial) map from the polyhedron to geodesic triangles.
See e.g.~\cite[Sec.~7.2]{HesSloWom10}.

We use a polyhedral approximation $\mathcal{S}_{h}^{2} = (\mathcal{V}, \mathcal{T})$ of the 2-sphere $\mathcal{S}^2$.
It is defined by a set $\mathcal{V} = \{v_{1}, \dots, v_{n}\} \subset \mathcal{S}^2$ of vertices and a set $\mathcal{T} = \{T_{1}, \dots, T_{m}\} \subset \mathcal{V} \times \mathcal{V} \times \mathcal{V}$ of triangular faces.
Each triangle is most easily parametrised using barycentric coordinates, see e.g.~\cite[Chapter~5]{BotKobPauAllLev10}.
We associate with each triangle $T_{i} \in \mathcal{T}$ a tuple $(i_{1}, i_{2}, i_{3})$ identifying the corresponding vertices $(v_{i_{1}}, v_{i_{2}}, v_{i_{3}})$, which are arranged in clockwise order.
The parametrisation~\eqref{eq:sphereparam} then reads
\begin{equation*}
	\x_{i}(\xi) = v_{i_{1}} + \xi^{1} (v_{i_{3}} - v_{i_{1}}) + \xi^{2} (v_{i_{2}} - v_{i_{1}})
\end{equation*}
with
\begin{equation*}
	\Omega = \{ \xi \in \R^{2} : \xi^{1} \in [0, 1] \text{ and } \xi^{2} \in [0, 1 - \xi^{1}] \},
\end{equation*}
which is referred to as the reference triangle.
The gradient matrix of $T_{i}$ is then simply
\begin{equation*}
	D\x_{i} = \begin{pmatrix}
		\partial_{1} \x_{i} & \partial_{2} \x_{i}
	\end{pmatrix} =
	\begin{pmatrix}
		v_{i_{3}} - v_{i_{1}} & v_{i_{2}} - v_{i_{1}}
	\end{pmatrix}.
\end{equation*}
The surface normal is constant on $T_{i}$ and is denoted by $\mathbf{\tilde{N}}_{i}$.

We approximate all functions on $\mathcal{S}^2$ by corresponding functions on the polyhedron $\mathcal{S}_{h}^{2}$.
A continuous function $\tilde{f}: \mathcal{S}^2 \to \R$ is replaced by its piecewise polynomial interpolation $\tilde{f}_{h}: \mathcal{S}_{h}^{2} \to \R$ on $\mathcal{S}_{h}^{2}$.
We define it as
\begin{equation}
	\tilde{f}_{h}(\cdot) \coloneqq \sum_{j=1}^{N_{h}} \tilde{\bar{f}}(v_{j}) \tilde{\varphi}_{j}(\cdot).
\label{eq:approx}
\end{equation}
Here, $\{ \tilde{\varphi}_{j} \}$ are $N_{h} = 6$ quadratic shape functions forming a nodal basis together with nodal points $\{ v_{j} \} \subset \mathcal{S}_{h}^{2}$ and $\tilde{\bar{f}}$ is the usual radially constant extension, cf.~\eqref{eq:extension} in Sec.~\ref{sec:diffgeo}.
In other words, $\tilde{f}_{h}$ is both a radial projection from the 2-sphere to the polyhedron $\mathcal{S}_{h}^{2}$ and to piecewise quadratic functions.
Note that the shape functions are defined on the triangular faces $T_{i}$.
Whenever a function $\tilde{f}$ has a dependence on time we simply compute its approximation $\tilde{f}_{h}$ separately for all times $t \in I$.
We point the reader to Fig.~\ref{fig:interp} for a figurative illustration.

\newcommand{\trianglepath}{(canvas polar cs:angle=80,radius=4cm) -- (canvas polar cs:angle=10,radius=4cm) -- (canvas polar cs:angle=40,radius=2.5cm) -- (canvas polar cs:angle=80,radius=4cm)}

\newcommand{\interptrianglepath}{(canvas polar cs:angle=80,radius=7cm) .. controls (canvas polar cs:angle=60,radius=7cm) and (canvas polar cs:angle=55,radius=6.25cm) .. (canvas polar cs:angle=40,radius=5.25cm) .. controls (canvas polar cs:angle=30,radius=6cm) and (canvas polar cs:angle=25,radius=6.5cm) .. (canvas polar cs:angle=10,radius=7cm) .. controls (canvas polar cs:angle=30,radius=7cm) and (canvas polar cs:angle=55,radius=7.5cm) .. (canvas polar cs:angle=80,radius=7cm)}

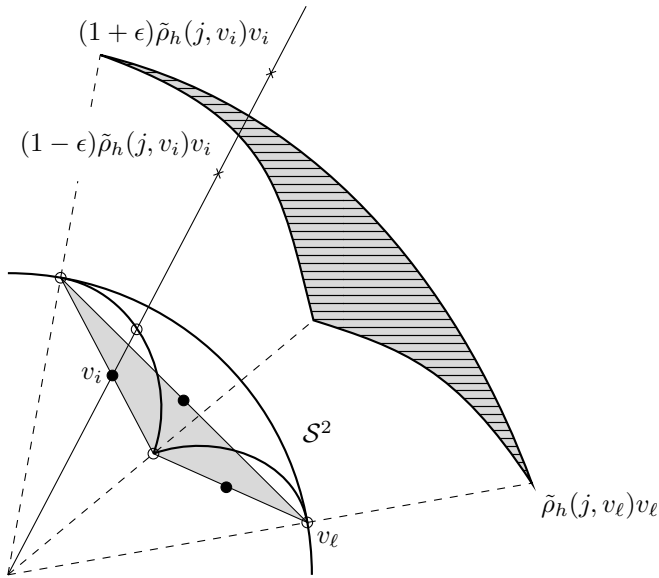
\begin{figure}[t]
	\begin{center}
	\begin{tikzpicture}
		\draw [thick] (0, 4) .. controls (2.22, 4) and (4, 2.22) .. (4, 0);
		\fill [gray!30] \trianglepath;
		\draw [thin] \trianglepath;
		\draw [dashed] (0, 0) to (canvas polar cs:angle=80,radius=5.3cm);
		\draw [dashed] (canvas polar cs:angle=80,radius=6.3cm) to (canvas polar cs:angle=80,radius=7cm);
		\draw [dashed] (0, 0) to (canvas polar cs:angle=40,radius=5.25cm);
		\draw [dashed] (0, 0) to (canvas polar cs:angle=10,radius=7cm);
		\fill [gray!30, postaction={pattern=horizontal lines}] \interptrianglepath;
		\draw [thick] \interptrianglepath;
		\draw [thick] (canvas polar cs:angle=80,radius=4cm) to [out=-10, in=70] (canvas polar cs:angle=40,radius=2.5cm);
		\draw [thick] (canvas polar cs:angle=40,radius=2.5cm) to [out=20, in=100] (canvas polar cs:angle=10,radius=4cm);
		\draw [black] (canvas polar cs:angle=80,radius=4cm) circle (2pt);
		\draw [black] (canvas polar cs:angle=40,radius=2.5cm) circle (2pt);
		\draw [black] (canvas polar cs:angle=10,radius=4cm) circle (2pt);
		\filldraw [black] (canvas polar cs:angle=45,radius=3.27cm) circle (2pt);
		\filldraw [black] (canvas polar cs:angle=22,radius=3.1cm) circle (2pt);
		\filldraw [black] (canvas polar cs:angle=62.5,radius=2.98cm) circle (2pt);
		\draw (0, 0) to (canvas polar cs:angle=62.5,radius=8.5cm);
		\draw [black] (canvas polar cs:angle=62.5,radius=3.67cm) circle (2pt);
		\tikzset{cross/.style={cross out, draw, minimum size=0.1cm, inner sep=0pt, outer sep=0pt}}
		\draw (canvas polar cs:angle=62.5,radius=6cm) node [cross, black, rotate=-25] {};
		\draw (canvas polar cs:angle=62.5,radius=7.5cm) node [cross, black, rotate=-25] {};
		\node [left] at (canvas polar cs:angle=62.5,radius=2.98cm) {$v_{i}$};
		\node [above left] at (canvas polar cs:angle=62.5,radius=6.1cm) {$(1-\epsilon) \tilde{\rho}_{h}(j, v_{i}) v_{i}$};
		\node [above left] at (canvas polar cs:angle=62.5,radius=7.75cm) {$(1+\epsilon) \tilde{\rho}_{h}(j, v_{i}) v_{i}$};
		\node at (canvas polar cs:angle=25,radius=4.5cm) {$\mathcal{S}^{2}$};
		\node [below right] at (canvas polar cs:angle=10,radius=4cm) {$v_{\ell}$};
		\node [below right] at (canvas polar cs:angle=10,radius=7cm) {$\tilde{\rho}_{h}(j, v_{\ell}) v_{\ell}$};
	\end{tikzpicture}
	\end{center}
	\caption{Illustration of a triangular face (filled gray) intersecting the sphere $\mathcal{S}^{2}$ at the vertices (hollow circles). The six nodal points consist of the vertices of the triangle together the edge midpoints (filled black dots). The approximated sphere-like surface is shown by the hatched gray area. A radial line passing through the vertex $v_{i}$ is shown. The hollow circle indicates the intersection with $\mathcal{S}^{2}$ at which $\tilde{\bar{f}}(v_{i})$ in~\eqref{eq:approx} is taken. $\tilde{\bar{f}}$ itself, as described in Sec.~\ref{sec:acquisition}, is assigned by taking the maximum image intensity along the drawn radial line between the two cross marks.}
	\label{fig:interp}
\end{figure}

In further consequence, the fully normalised scalar spherical harmonics, which were introduced in~\eqref{eq:spharm}, are substituted with their corresponding approximations on $\mathcal{S}_{h}^{2}$.
For $\tilde{Y} \in \mathrm{Harm}_{n}$, $n \in \mathbb{N}_{0}$ , we have
\begin{equation}
	\tilde{Y}_{h}(\cdot) = \sum_{j = 1}^{N_{h}} \tilde{\bar{Y}}(v_{j}) \tilde{\varphi}_{j}(\cdot).
\label{eq:spharminterp}
\end{equation}

We chose piecewise quadratic approximations for $\tilde{Y}$ so that we can adequately apply $\nabla_{\mathcal{S}_{h}^{2}}$ and obtain piecewise linear vector fields.
Accordingly, we define approximations of the vector spherical harmonics, introduced in~\eqref{eq:fullynormalisedvspharm}, as follows.

\begin{proposition} \label{def:vspharm}
Let $\tilde{Y} \in \mathrm{Harm}_{n}$, $n \in \mathbb{N}$.
The piecewise linear interpolations of the corresponding tangential vector spherical harmonics on a triangular face $T_{i} \in \mathcal{T}$ are
\begin{align}
	\mathbf{\tilde{y}}_{h}^{(2)}(\x_{i}(\xi)) & = \lambda_{n}^{-1/2} \sum_{j = 1}^{N_{h}} \tilde{\bar{Y}}(v_{j}) \nabla_{\mathcal{S}_{h}^{2}} \tilde{\varphi}_{j}(\x_{i}(\xi)), \label{eq:vspharm1} \\
	\mathbf{\tilde{y}}_{h}^{(3)}(\x_{i}(\xi)) & = \frac{\lambda_{n}^{-1/2}}{2 \abs{T_{i}}} \sum_{j = 1}^{N_{h}} \tilde{\bar{Y}}(v_{j}) \bigl( \partial_{2} \varphi_{j}(\xi) \partial_{1} \x_{i}(\xi) - \partial_{1} \varphi_{j}(\xi) \partial_{2} \x_{i}(\xi) \bigr). \label{eq:vspharm2}
\end{align}
\end{proposition}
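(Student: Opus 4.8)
The plan is to obtain both formulae by substituting the interpolants of the scalar building blocks into the definitions~\eqref{eq:fullynormalisedvspharm} of the fully normalised tangential vector spherical harmonics, $\mathbf{\tilde{y}}_{n}^{(2)} = \lambda_{n}^{-1/2}\nabla_{\mathcal{S}^2}\tilde{Y}$ and $\mathbf{\tilde{y}}_{n}^{(3)} = \lambda_{n}^{-1/2}\,\nabla_{\mathcal{S}^2}\tilde{Y}\times\mathbf{\tilde{N}}$, and replacing $\tilde{Y}$ by its piecewise quadratic interpolant $\tilde{Y}_{h} = \sum_{j=1}^{N_{h}}\tilde{\bar{Y}}(v_{j})\tilde{\varphi}_{j}$ from~\eqref{eq:spharminterp}, $\nabla_{\mathcal{S}^2}$ by the polyhedral gradient $\nabla_{\mathcal{S}_{h}^{2}}$, and $\mathbf{\tilde{N}}$ by the face-constant polyhedral normal $\mathbf{\tilde{N}}_{i}$ on a triangle $T_{i}\in\mathcal{T}$. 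Since $\nabla_{\mathcal{S}_{h}^{2}}$ acts linearly on each face, formula~\eqref{eq:vspharm1} is immediate: $\mathbf{\tilde{y}}_{h}^{(2)} = \lambda_{n}^{-1/2}\nabla_{\mathcal{S}_{h}^{2}}\tilde{Y}_{h} = \lambda_{n}^{-1/2}\sum_{j=1}^{N_{h}}\tilde{\bar{Y}}(v_{j})\,\nabla_{\mathcal{S}_{h}^{2}}\tilde{\varphi}_{j}$. Because $\x_{i}$ is affine in $\xi$ and each $\tilde{\varphi}_{j}$ is quadratic on the reference triangle $\Omega$, the gradients $\nabla_{\mathcal{S}_{h}^{2}}\tilde{\varphi}_{j}$ are affine, so $\mathbf{\tilde{y}}_{h}^{(2)}$ is indeed piecewise linear.

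For~\eqref{eq:vspharm2} the only real computation is to rewrite $\nabla_{\mathcal{S}_{h}^{2}}\tilde{\varphi}_{j}\times\mathbf{\tilde{N}}_{i}$ on $T_{i}$ in terms of $\partial_{1}\x_{i},\partial_{2}\x_{i}$ (constant on $T_{i}$) and the reference-triangle derivatives $\partial_{1}\varphi_{j},\partial_{2}\varphi_{j}$. I would use the coordinate representation~\eqref{eq:surfgrad_S_coord} of the surface gradient, which on $T_{i}$ reads $\nabla_{\mathcal{S}_{h}^{2}}\tilde{\varphi}_{j} = D\x_{i}(D\x_{i}^{\top}D\x_{i})^{-1}\nabla_{\R^{2}}\varphi_{j}$, together with $\mathbf{\tilde{N}}_{i} = (\partial_{1}\x_{i}\times\partial_{2}\x_{i})/\norm{\partial_{1}\x_{i}\times\partial_{2}\x_{i}}$ and $2\abs{T_{i}} = \norm{\partial_{1}\x_{i}\times\partial_{2}\x_{i}}$. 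The key identity to establish is
\[
	\bigl(D\x_{i}(D\x_{i}^{\top}D\x_{i})^{-1}w\bigr)\times\mathbf{\tilde{N}}_{i} = \frac{1}{2\abs{T_{i}}}\bigl(w^{2}\,\partial_{1}\x_{i} - w^{1}\,\partial_{2}\x_{i}\bigr),\qquad w = (w^{1},w^{2})^{\top}\in\R^{2},
\]
after which, taking $w = \nabla_{\R^{2}}\varphi_{j} = (\partial_{1}\varphi_{j},\partial_{2}\varphi_{j})^{\top}$, multiplying by $\lambda_{n}^{-1/2}\tilde{\bar{Y}}(v_{j})$ and summing over $j$ yields~\eqref{eq:vspharm2}; piecewise linearity of $\mathbf{\tilde{y}}_{h}^{(3)}$ is then visible on the right-hand side. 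I would prove the displayed identity by taking the Euclidean inner product of both sides with $\partial_{1}\x_{i}$ and with $\partial_{2}\x_{i}$: both sides are tangential (orthogonal to $\mathbf{\tilde{N}}_{i}$) and $\{\partial_{1}\x_{i},\partial_{2}\x_{i}\}$ spans the tangent plane; on the left one reorganises the resulting scalar triple products using the vector triple product identity $a\times(b\times c) = b(a\cdot c) - c(a\cdot b)$ and Lagrange's identity $\det(D\x_{i}^{\top}D\x_{i}) = \norm{\partial_{1}\x_{i}\times\partial_{2}\x_{i}}^{2}$, and the two sides reduce to the same Gram-matrix entries, where one matches the polyhedral analogue of~\eqref{eq:partialf}, namely $\nabla_{\mathcal{S}_{h}^{2}}\tilde{\varphi}_{j}\cdot\partial_{k}\x_{i} = \partial_{k}\varphi_{j}$.

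The proof will be essentially bookkeeping; the one point needing genuine care is the \emph{orientation}. Since the triangle vertices $(v_{i_{1}},v_{i_{2}},v_{i_{3}})$ are arranged in clockwise order, I would check that $\partial_{1}\x_{i}\times\partial_{2}\x_{i} = (v_{i_{3}}-v_{i_{1}})\times(v_{i_{2}}-v_{i_{1}})$ points along the outward normal $\mathbf{\tilde{N}}_{i}$ rather than against it; this is exactly what fixes the global sign, and hence the order $\partial_{2}\varphi_{j}\,\partial_{1}\x_{i} - \partial_{1}\varphi_{j}\,\partial_{2}\x_{i}$ instead of its negative, in~\eqref{eq:vspharm2}. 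Everything else is immediate: the degenerate degree-zero case is excluded by the hypothesis $n\in\mathbb{N}$ (so $\lambda_{n}\neq 0$), and piecewise linearity of both fields follows because $\partial_{1}\x_{i},\partial_{2}\x_{i}$ are constant on $T_{i}$ while $\partial_{1}\varphi_{j},\partial_{2}\varphi_{j}$ are affine on $\Omega$.
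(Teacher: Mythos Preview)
Your proposal is correct and rests on the same ingredients as the paper's argument: the area relation $2\abs{T_{i}}=\norm{\partial_{1}\x_{i}\times\partial_{2}\x_{i}}$, the expression $\mathbf{\tilde{N}}_{i}=(\partial_{1}\x_{i}\times\partial_{2}\x_{i})/\norm{\partial_{1}\x_{i}\times\partial_{2}\x_{i}}$, the vector triple product rule, and the polyhedral directional-derivative identity $\nabla_{\mathcal{S}_{h}^{2}}\tilde{Y}_{h}\cdot\partial_{k}\x_{i}=\partial_{k}Y_{h}$. The paper's route for~\eqref{eq:vspharm2} is, however, more direct than yours: it applies the BAC--CAB identity once to $\nabla_{\mathcal{S}_{h}^{2}}\tilde{Y}_{h}\times(\partial_{1}\x_{i}\times\partial_{2}\x_{i})$ and reads off the two tangential components immediately via the directional-derivative identity, bypassing altogether the explicit coordinate representation $D\x_{i}(D\x_{i}^{\top}D\x_{i})^{-1}\nabla_{\R^{2}}\varphi_{j}$, the testing against $\partial_{1}\x_{i}$ and $\partial_{2}\x_{i}$, and Lagrange's identity. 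In your approach the metric inverse and $\det(D\x_{i}^{\top}D\x_{i})$ appear only to cancel each other, which is precisely why the paper never needs to introduce them; your detour is sound but carries avoidable bookkeeping.
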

Their derivation is deferred to the appendix.

Without loss of generality, let $\tilde{f}_{h}(0, \cdot)$ and $\tilde{f}_{h}(1, \cdot)$ be the approximations of the data $\tilde{f}$ at two subsequent frames.
We define the derivative with respect to time by the forward difference
\begin{equation*}
	\partial_{t} \tilde{f}_{h}(\cdot) \coloneqq \tilde{f}_{h}(1, \cdot) - \tilde{f}_{h}(0, \cdot).
\end{equation*}
Moreover, we replace the surface gradient $\nabla_{\mathcal{S}^2} \tilde{f}$ of a function on $\mathcal{S}^2$ with its counterpart $\nabla_{\mathcal{S}_{h}^{2}} \tilde{f}_{h}$ on $\mathcal{S}_{h}^{2}$, which is computed according to~\eqref{eq:surfgrad_S_coord}.
The function $\tilde{\rho}$ is obtained by solving~\eqref{eq:surfoptcond} and, for numerical computations, is further replaced with its piecewise quadratic interpolation $\tilde{\rho}_{h}$ as in~\eqref{eq:approx}.
Coefficients $\alpha_{i}^{j}$ are computed by the Gram-Schmidt process at the nodal points.
For numerical computations piecewise quadratic approximations, as defined in~\eqref{eq:approx}, are used.

Finally, for the calculation of the integrals we employ the standard quadrature on triangulated spheres, see e.g.~\cite{AtkHan12, HesSloWom10}.
Let $\xi_{c} = (1/3, 1/3)^{\top}$ be the centroid of the reference triangle $\Omega$.
Then, we approximate the spherical integral over a function $\tilde{f}: \mathcal{S}^2 \to \R$ on the 2-sphere by
\begin{equation*}
	\int_{\mathcal{S}^2} \tilde{f} \; d\mathcal{S}^2 \approx \int_{\mathcal{S}_{h}^{2}} \tilde{f}_{h} \; d\mathcal{S}_{h}^{2} \approx \sum_{i=1}^{m} \abs{T_{i}} \tilde{f}_{h}(\x_{i}(\xi_{c})).
\end{equation*}
\section{Experiments} \label{sec:experiments}

\subsection{Microscopy Data} \label{sec:data}

The present data consist of volumetric time-lapse (4-dimensional) images of a live zebrafish embryo during the gastrula period.
These videos were recorded approximately five to ten hours after fertilisation by means of confocal laser-scanning microscopy and feature endodermal cells expressing a green fluorescence protein.
As a consequence, these labelled cells are recorded without background and allow for a separate treatment.
We refer the reader to~\cite{KimBalKimUllSchi95} for many illustrations and a detailed discussion of the zebrafish's developmental process.
Regarding the imaging techniques used during data acquisition we refer to~\cite{MegFra03} and for the treatment of the specimen we point the reader to~\cite{MizVerHeaKurKik08}.

The crucial feature of endodermal cells is the fact that they form a so-called monolayer during early morphogenesis, see~\cite{WarNus99}.
Essentially, it means that the labelled cells do not sit on top of each other but float side by side forming an artificial sphere-shaped layer.
It can be regarded as a surface and allows for the straightforward extraction of an image sequence.
Clearly, this surface is subject to geometric approximations.
For instance, in~\cite{KirLanSch14, SchmShaScheWebThi13} it is assumed an ideal sphere, whereas in~\cite{BauGraKir15} and~\cite{KirLanSch15} only a fraction of the data is considered and modelled as a moving manifold and a height field, respectively, both possessing a boundary.

The recorded data features a cuboid region of approximately $860 \times 860 \times 320 \, \mu \mathrm{m}^{3}$ of the animal hemisphere.
The spatial resolution is $512 \times 512 \times 44$~voxels and the recorded image intensities are in the range $\{ 0, \dots, 255 \}$.
Our sequence contains 75 images with a temporal interval of $240 \, \mathrm{s}$.
For the further discussion, we denote the data by
\begin{equation*}
	f^{\delta} \in \{ 0, \dots, 255 \}^{75 \times 512 \times 512 \times 44}.
\end{equation*}

\subsection{Preprocessing and Surface Data Acquisition} \label{sec:acquisition}

Let us briefly discuss the preprocessing steps required to obtain an image sequence together with the evolving surface.
We limit our consideration to two consecutive frames and denote the respective volumetric data by $f_{0}^{\delta}$ and $f_{1}^{\delta}$.

For each frame, the approximate surface is found by minimising the functional \eqref{eq:surffunctional} with approximate cell centres acting as sample points.
They appear as local maxima in image intensity and are readily located by Gaussian filtering followed by plain thresholding.
However, beforehand the points are centred around the origin by first fitting a sphere and subsequently subtracting the spherical centre.

The triangle mesh $\mathcal{S}_{h}^{2}$ is obtained by iterative refinement of an icosahedron that is inscribed in the 2-sphere, see e.g.~\cite[Chapter~1.3.3]{BotKobPauAllLev10}.
Every refinement step halves the edge lengths by connecting the edge midpoints and projecting them to the unit sphere.
Consequentially, every triangular face is split into four smaller triangles and the total number of faces after $k \in \mathbb{N}_{0}$ subdivisions is $20 \cdot 4^{k}$.
In our case, $k = 7$ refinements are required to resolve the data adequately.

It remains to discuss the acquisition of the approximations $\tilde{f}_{h}(0, \cdot)$ and $\tilde{f}_{h}(1, \cdot)$ on the polyhedron.
For a frame $j \in \{ 0, 1 \}$, we define the value at a nodal point $v_{i} \in \mathcal{S}_{h}^{2}$ in~\eqref{eq:approx} via the projection
\begin{equation*}
	\tilde{\bar{f}}(j, v_{i}) \coloneqq \max_{c \in [1-\varepsilon, 1+\varepsilon]} \mathring{f_{j}^{\delta}}(c \tilde{\rho}_{h}(j, v_{i}) v_{i}),
\end{equation*}
where $\varepsilon > 0$ is chosen sufficiently large.
$\mathring{f_{j}^{\delta}}$ denotes the piecewise linear extension of $f_{j}^{\delta}$ to $\R^{3}$, which is necessary for gridded data.
The above projection within the narrow band
\begin{equation*}
	[(1-\varepsilon) \tilde{\rho}_{h}(j, v_{i}) v_{i}, (1+\varepsilon) \tilde{\rho}_{h}(j, v_{i}) v_{i}]
\end{equation*}
corrects for small deviations of the cells from the fitted surface.
Again, we refer to Fig.~\ref{fig:interp} for illustration.
Finally, all intensities are scaled to the interval $[0, 1]$.
Figure~\ref{fig:data} shows two frames of the extracted image sequence defined on the sphere-like evolving surface.
Figure~\ref{fig:data2} depicts the same matter but in a top view.
For better illustration we have added an artificial mesh.
Its radius has been widened by one percent.

\begin{figure}[t]
	\includegraphics[width=0.49\textwidth]{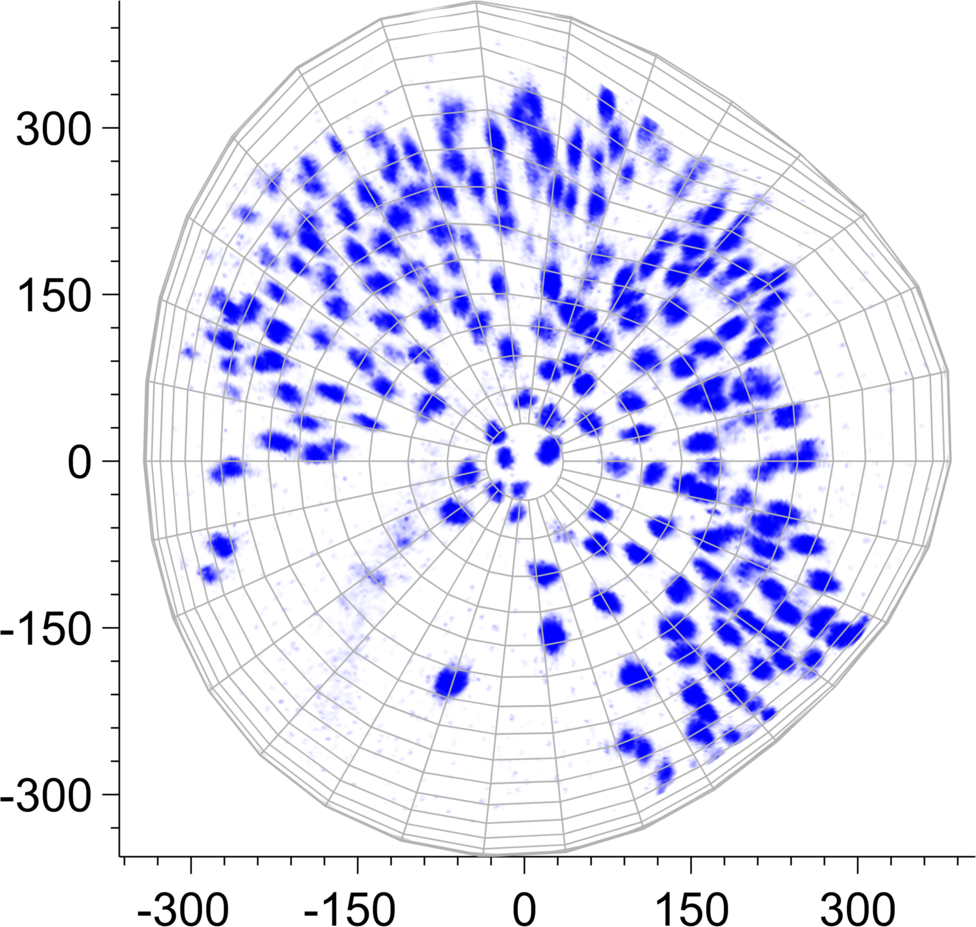} \hfill
	\includegraphics[width=0.49\textwidth]{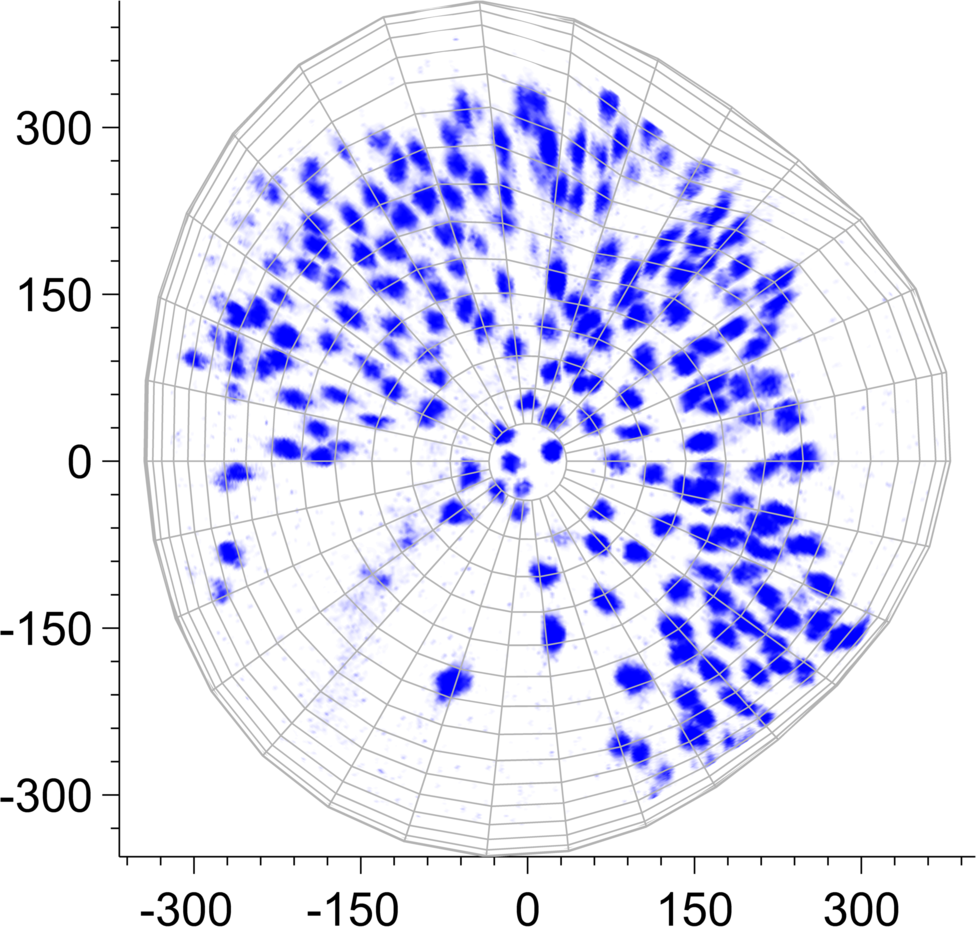}
	\caption{Frames no. 70 (left) and 71 (right) of the processed image sequence in a top view. The embryo's body axis is oriented from bottom left to top right.}
	\label{fig:data2}
\end{figure}

\subsection{Visualisation of Results} \label{sec:visualisation}

We employ the standard flow colour-coding~\cite{BakSchaLewRotBla11} for the visualisation of the computed vector fields.
Its purpose is to create a colour image by assigning every vector a colour from a pre-defined colour disk.
The colour associated is determined by a vector's angle and its length.

However, it was originally defined for planar vector fields and requires adaptation to our particular purpose of tangent vector field visualisation.
To this end, we follow the idea developed in~\cite{KirLanSch14} by first projecting each vector to the plane and then rescaling its length.
Let us denote by $\mathrm{P}_{x_{3}}: (x_{1}, x_{2}, x_{3})^{\top} \mapsto (x_{1}, x_{2}, 0)^{\top}$ the orthogonal projector of $\R^{3}$ onto the $x_{1}$-$x_{2}$-plane.
For a tangent vector field $\mathbf{\hat{v}}$ we apply the colour-coding to the planar vector field
\begin{equation*}
	\frac{\norm{\mathbf{\hat{v}}}}{\norm{\mathrm{P}_{x_{3}} \mathbf{\hat{v}}}} \mathrm{P}_{x_{3}} \mathbf{\hat{v}}.
\end{equation*}
It is constructed so that the length of individual vectors is preserved.
Subsequently, the obtained colour image is mapped back onto the surface.
Clearly, in the above construction, one has to distinguish the cases where $x_{3} \ge 0$ and $x_{3} < 0$. Moreover, $\mathrm{P}_{x_{3}}$ is required to be injective in either case.

The radius $R$ of the colour disk is chosen to be equal to the longest vector in the respective vector field we attempt to visualise.
Table~\ref{tab:radii} lists all values of $R$ for the different figures in this section.
In Fig.~\ref{fig:result} we show a colour-coded tangent vector field together with the colour disk.

For simplicity reasons, for image functions as well as surfaces we plot their piecewise linear approximations.
Moreover, the visualised vector fields are evaluated at the centroids and result in piecewise constant colour-coded images.

\subsection{Results}

We performed several experiments on said zebrafish microscopy data.
In order to obtain an approximation of the evolving surface, we minimised functional~\eqref{eq:surffunctional} by solving the optimality conditions~\eqref{eq:surfoptcond}.
As mentioned in Sec.~\ref{sec:acquisition}, approximate cell centres serve as input.
The parameter of the Sobolev space $H^{s}(\mathcal{S})$ was chosen as $s = 3 + \epsilon$, where $\epsilon = 2.2204 \cdot 10^{-16}$ is the machine precision, cf. also the discussion regarding theoretical requirements in Sec.~\ref{sec:surface}.
The regularisation parameter was set to $\beta = 10^{-4}$ and the finite-dimensional subspace was chosen as
\begin{equation*}
	\mathcal{Q} = \mathrm{span} \Big\lbrace \tilde{Y}_{nj}: n = 0, \dots, 30, j = 1, \dots, 2n + 1 \Big\rbrace.
\end{equation*}

In the second step, we computed a minimiser of functional~\eqref{eq:functional} as outlined in Sec.~\ref{sec:numericalapprox}.
Here, the finite-dimensional subspace was chosen as
\begin{equation*}
	\mathcal{U} = \mathrm{span} \Big\lbrace \mathbf{\hat{y}}_{nj}^{(i)}: n = 1, \dots, 50, j = 1, \dots, 2n + 1, i = 2, 3 \Big\rbrace.
\end{equation*}

The linear systems resulting from optimality conditions~\eqref{eq:optcondpullback} and~\eqref{eq:surfoptcond} were solved by means of the General Minimal Residual Method (GMRES) using an Intel Xeon E5-1620 $3.6 \, \mathrm{GHz}$ workstation equipped with $128 \, \mathrm{GB}$ RAM.
Solutions to~\eqref{eq:optcondpullback} and~\eqref{eq:surfoptcond} converged within 1000 and 100 iterations, respectively, to a relative residual of $10^{-2}$.
The overall runtime was dominated by the evaluation of the integrals \eqref{eq:a}, \eqref{eq:d}, and \eqref{eq:b}.
In our Matlab implementation it amounts to several hours.
However, the resulting linear system can typically be solved within seconds.
Both implementation and data are available on our website.\footnote{\url{http://www.csc.univie.ac.at}}

Figures~\ref{fig:rho2} and~\ref{fig:rho3} portray a minimising function of $\mathcal{F}_{\beta}$ for frames 70 and 71 of the image sequence.
The resulting surface is depicted in Fig.~\ref{fig:data} and in Fig.~\ref{fig:data2}.
Clearly, it reflects the geometry appropriately and contains the desired cell features, cf. also the unprocessed microscopy data in Fig.~\ref{fig:raw}.

\begin{figure}[t]
	\includegraphics[width=0.49\textwidth]{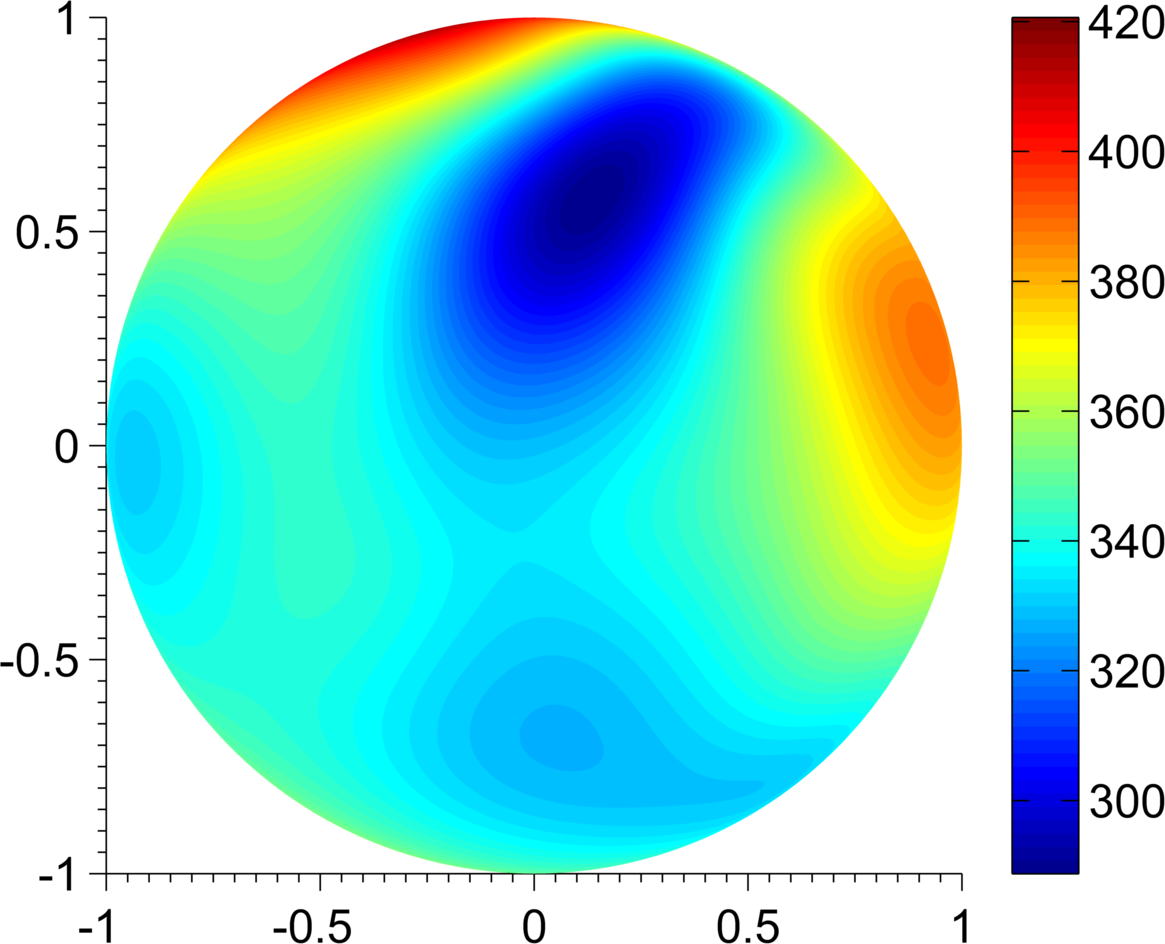} \hfill
	\includegraphics[width=0.49\textwidth]{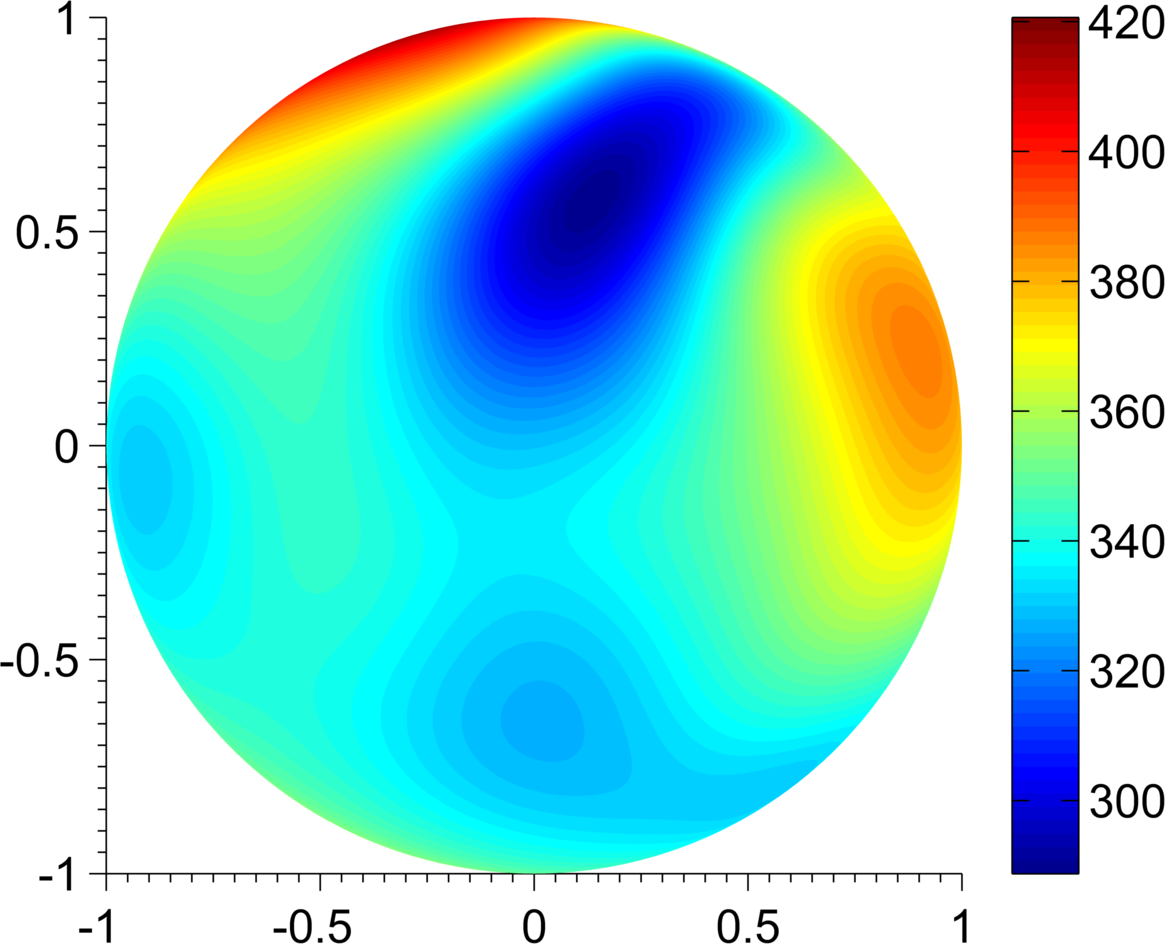}
	\caption{Function $\tilde{\rho}_{h}$ obtained by minimising $\mathcal{F}_{\beta}$ for frames 70 (left) and 71 (right). Colour corresponds to the radius of the fitted surface. $\mathcal{S}_{h}^{2}$ is depicted in a top view.}
	\label{fig:rho2}
\end{figure}

\begin{figure}[t]
	\includegraphics[width=0.49\textwidth]{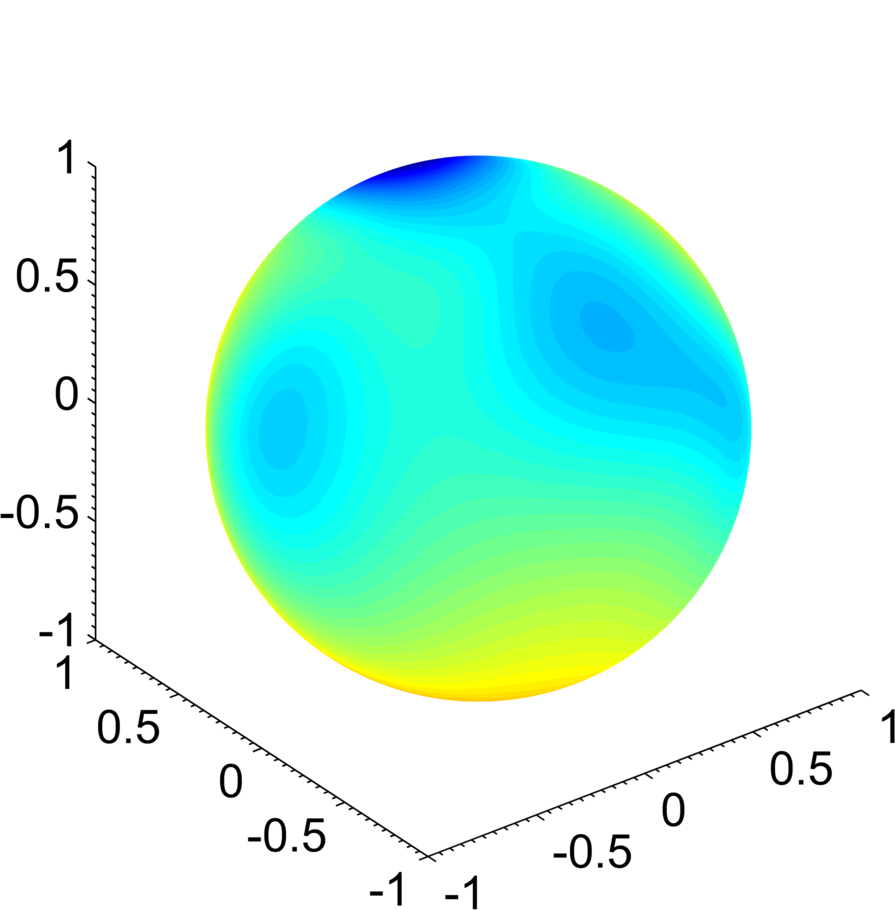} \hfill
	\includegraphics[width=0.49\textwidth]{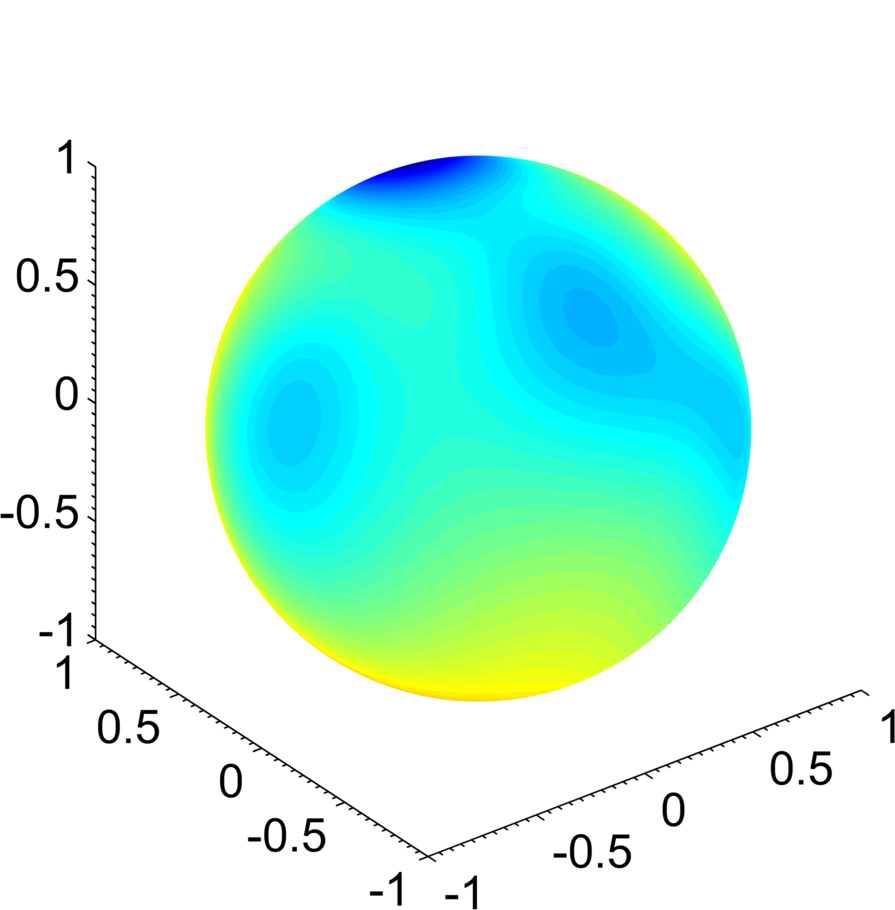} \\
	\includegraphics[width=0.49\textwidth]{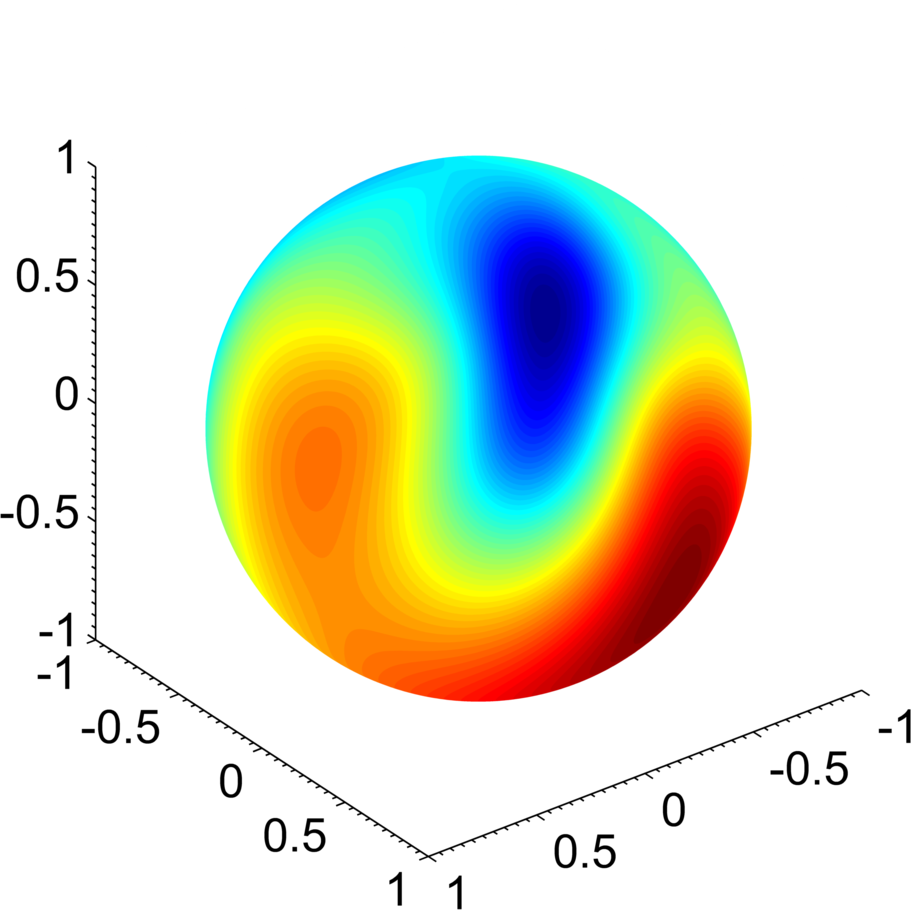} \hfill
	\includegraphics[width=0.49\textwidth]{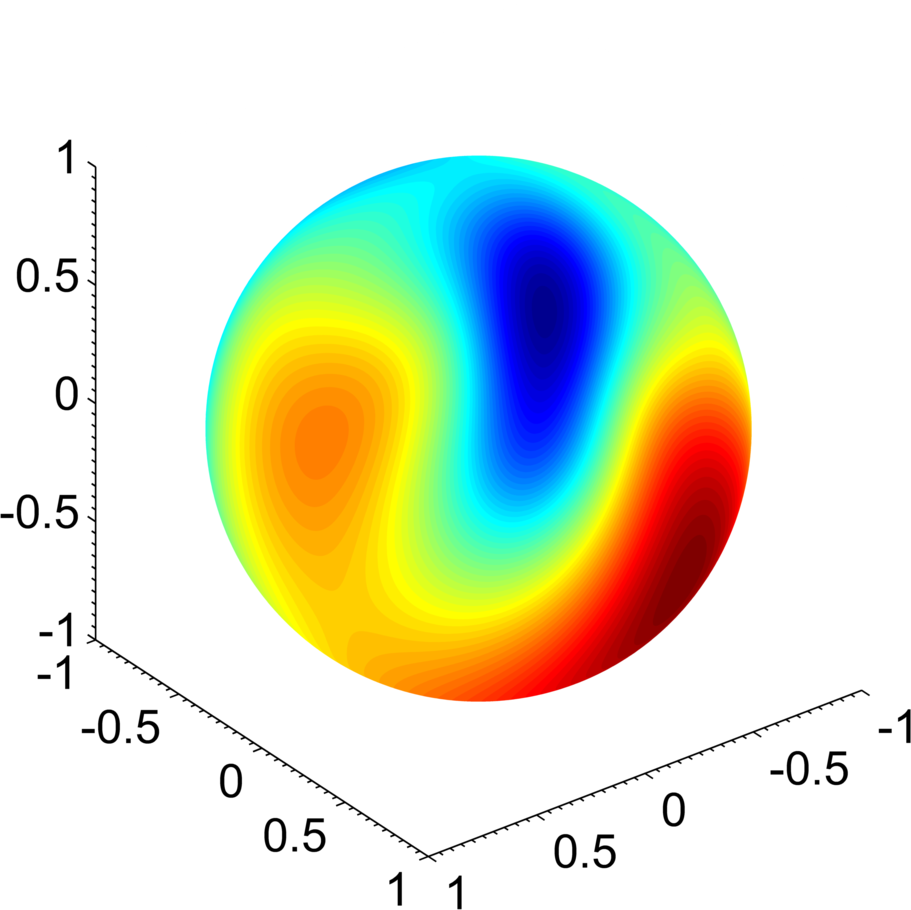}
	\caption{Illustration of the function $\tilde{\rho}_{h}$ for frames 70 (left column) and 71 (right column). The bottom row differs from the top row by a rotation of 180 degrees around the $x_{3}$-axis.}
	\label{fig:rho3}
\end{figure}

\begin{table}[t]
	\centering
	\begin{tabular}{@{}l@{\hspace{0.75em}}@{}l@{\hspace{0.75em}}@{}l@{\hspace{0.75em}}@{}l@{\hspace{0.75em}}@{}l@{\hspace{0.75em}}@{}l@{\hspace{0.75em}}@{}l@{\hspace{0.75em}}@{}l@{\hspace{0.75em}}@{}l@{\hspace{0.75em}}@{}l@{\hspace{0.75em}}}
		Figure & \ref{fig:result} & \ref{fig:flow3} (a) & \ref{fig:flow3} (b) & \ref{fig:flow3} (c) & \ref{fig:flow3} (d) & \ref{fig:flow2} (a) & \ref{fig:flow2} (b) & \ref{fig:flow2} (c) & \ref{fig:flow2} (d) \\
		\hline
		$R$ & $5.18$ & $9.86$ & $5.18$ & $3.64$ & $2.52$ & $9.86$ & $5.18$ & $3.64$ & $2.52$
	\end{tabular}
	\caption{Radii $R$ of the colour disks used for colour-coded visualisation of tangent vector fields.}
	\label{tab:radii}
\end{table}

In a second step we solved for minimisers of $\mathcal{E}_{\alpha}$ for different values of the regularisation parameter $\alpha$.
Figure~\ref{fig:result} depicts the optical flow field for $\alpha = 10^{-1}$.
The tangent vector field is visualised as discussed in Sec.~\ref{sec:visualisation}.
Note that in all figures the colour disk has been scaled for better illustration.
In Fig.~\ref{fig:flow3}, we illustrate tangent vector fields by means of the colour-coding obtained for $\alpha = 10^{-2}$, $\alpha = 10^{-1}$, $\alpha = 1$, and $\alpha = 10$.
Finally, in Fig.~\ref{fig:flow2} we show the same results but in a top view.

\begin{figure}[t]
	\includegraphics[width=0.49\textwidth]{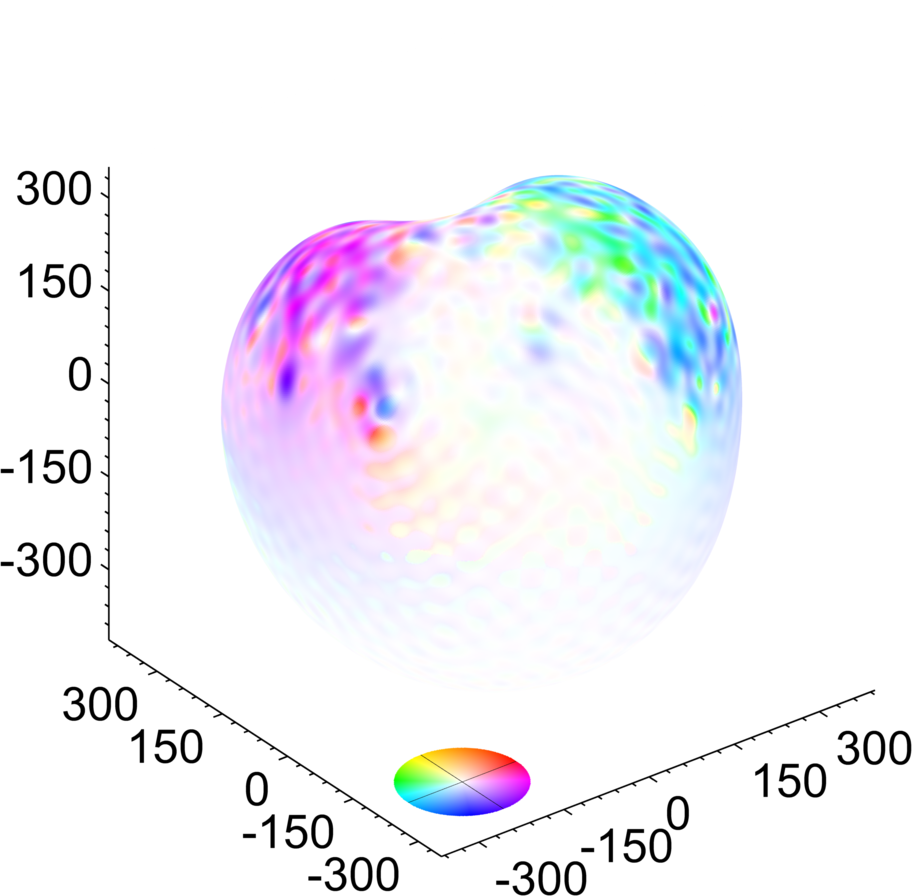} \hfill
	\includegraphics[width=0.49\textwidth]{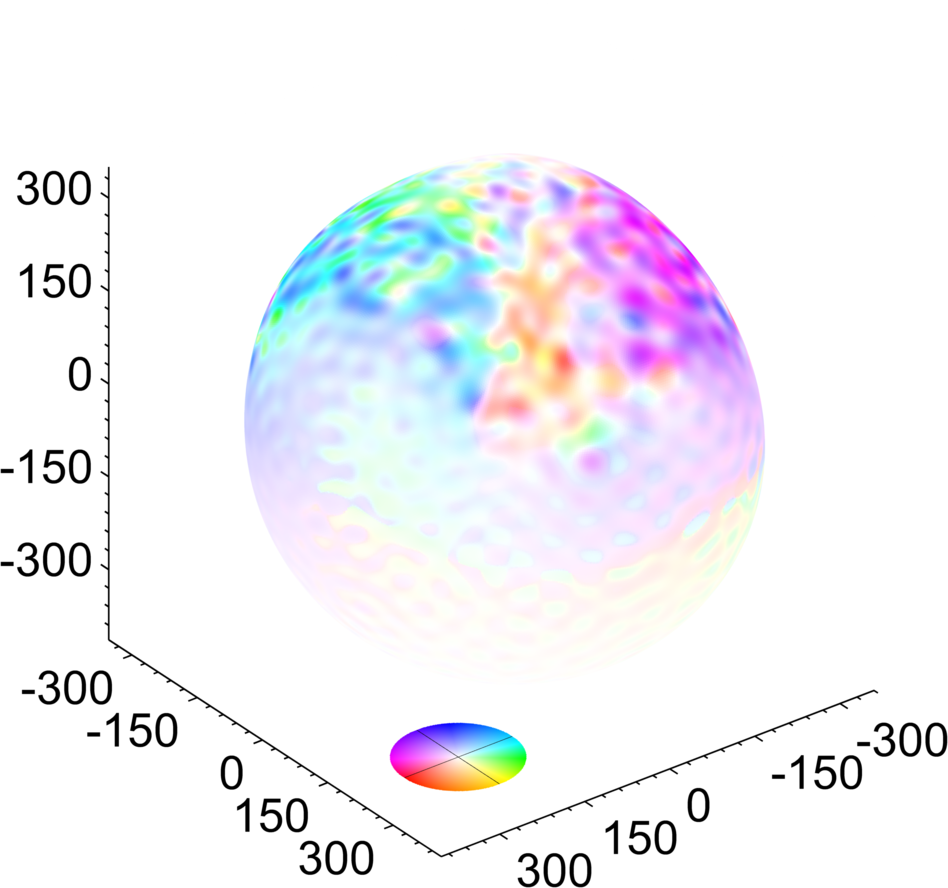}
	\caption{Tangent vector field minimising $\mathcal{E}_{\alpha}$. Depicted is the colour-coded optical flow field computed between frames 70 and 71. The right image differs from the left by a rotation of 180 degrees around the $x_{3}$-axis.}
	\label{fig:result}
\end{figure}

\begin{figure}[t]
	\includegraphics[width=0.24\textwidth]{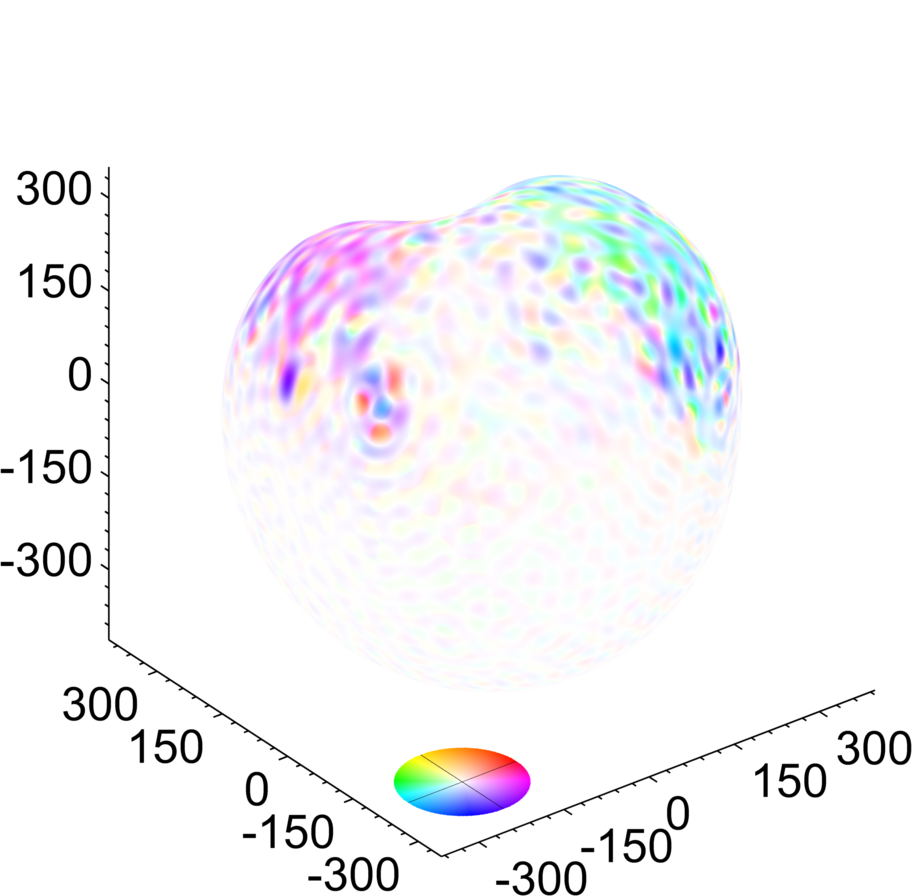} \hfill
	\includegraphics[width=0.24\textwidth]{figures/flow3/flow3-frames-140-142-unfiltered-1-50-7-3-600dpi} \hfill
	\includegraphics[width=0.24\textwidth]{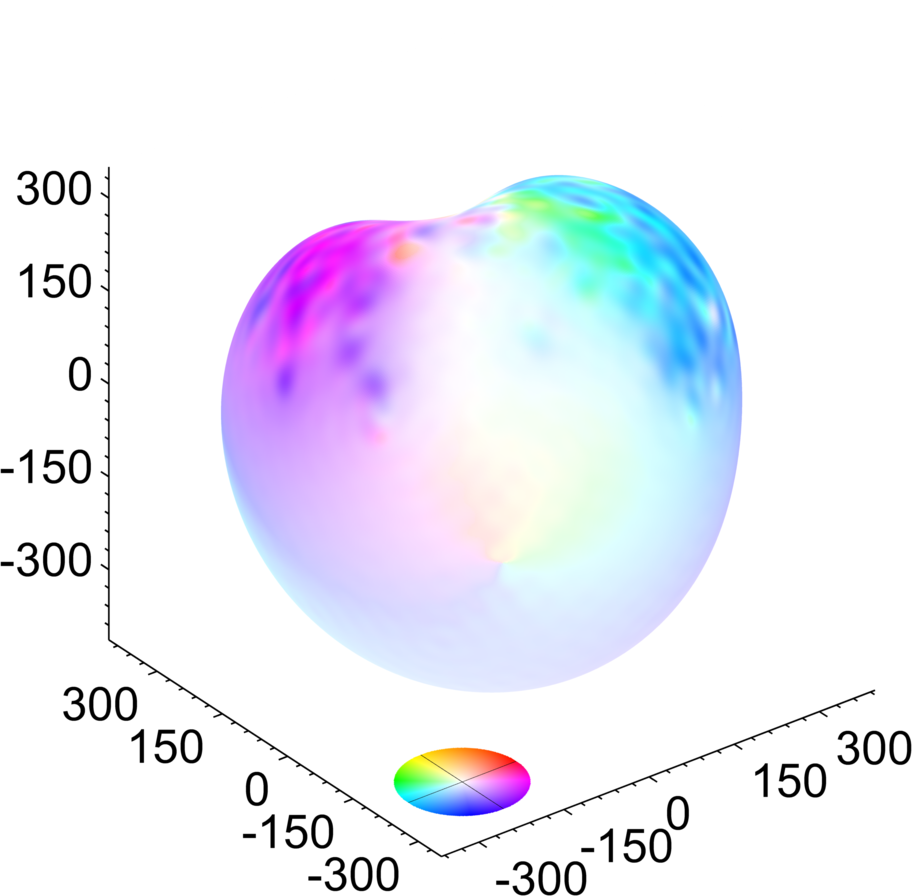} \hfill
	\includegraphics[width=0.24\textwidth]{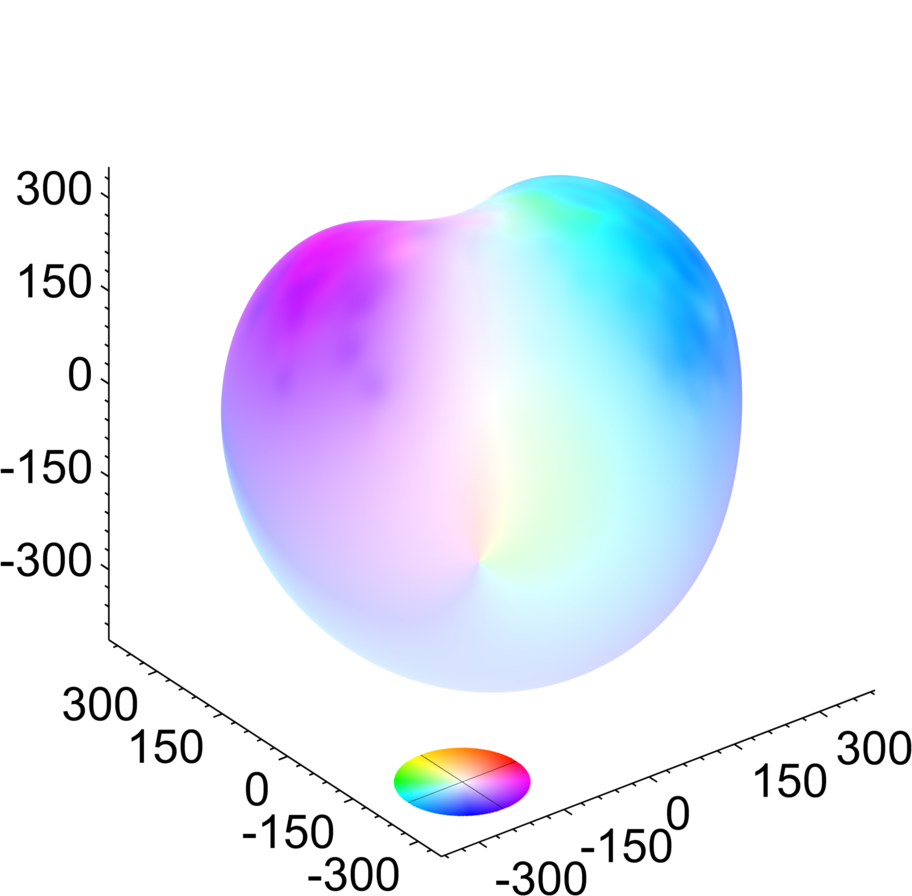} \\			
	\includegraphics[width=0.24\textwidth]{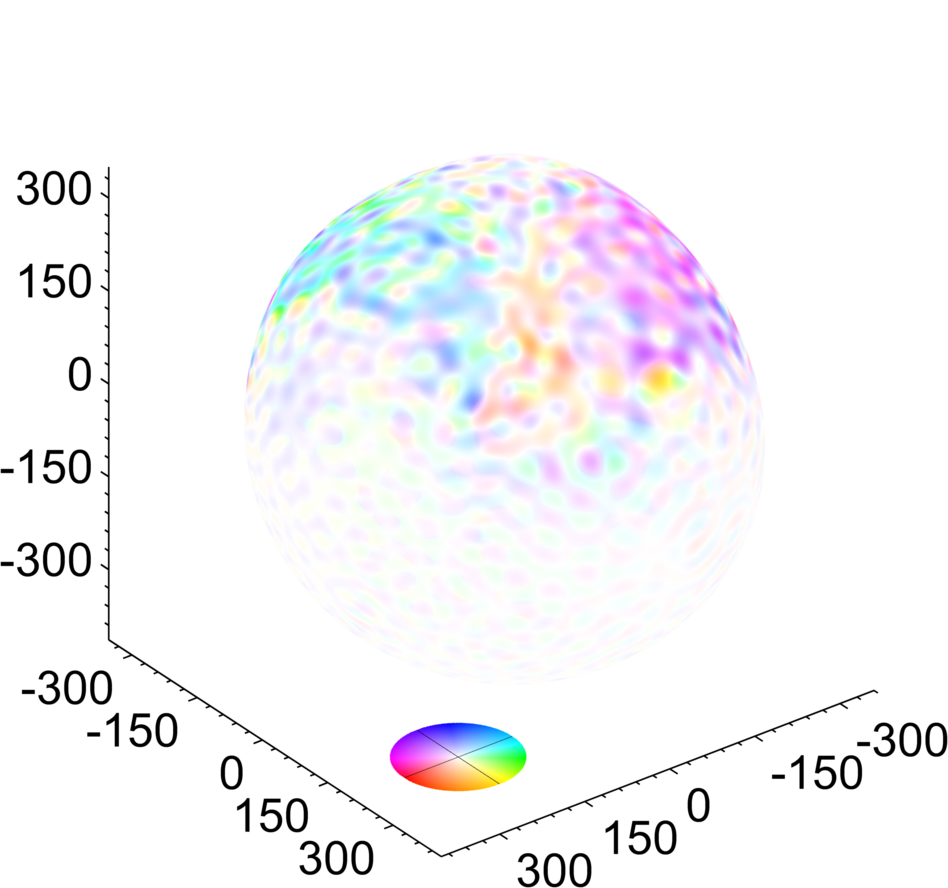} \hfill
	\includegraphics[width=0.24\textwidth]{figures/flow3/flow3-frames-140-142-unfiltered-1-50-7-3-rotated-600dpi} \hfill
	\includegraphics[width=0.24\textwidth]{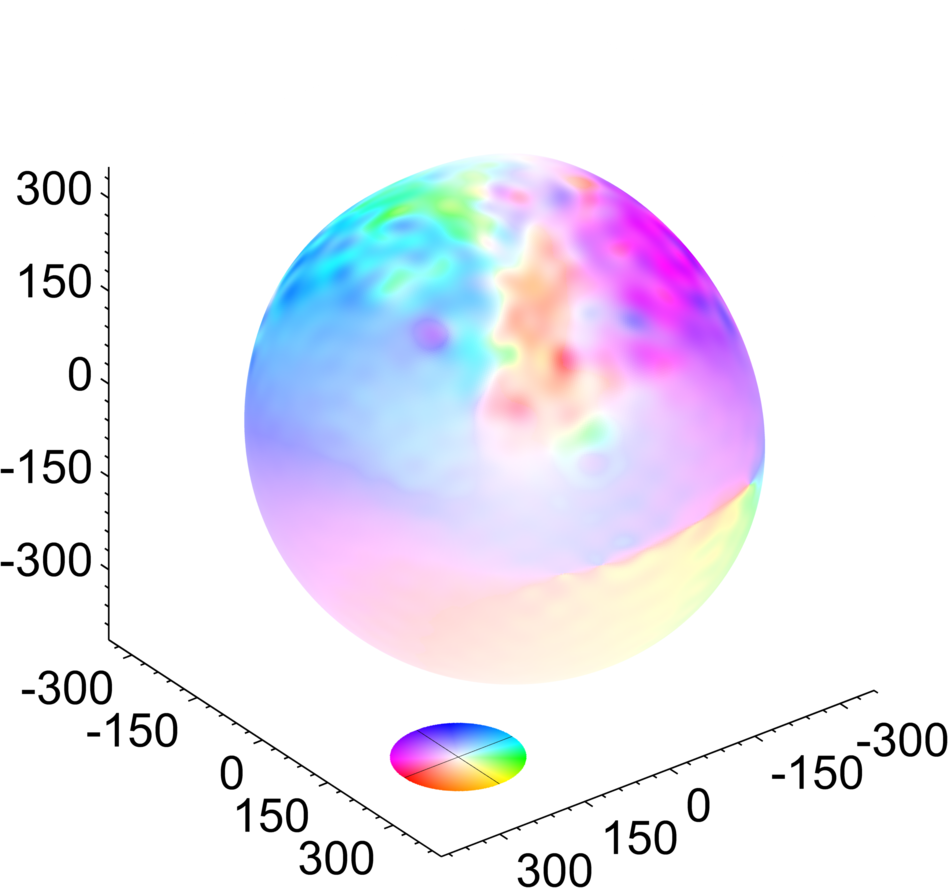} \hfill
	\includegraphics[width=0.24\textwidth]{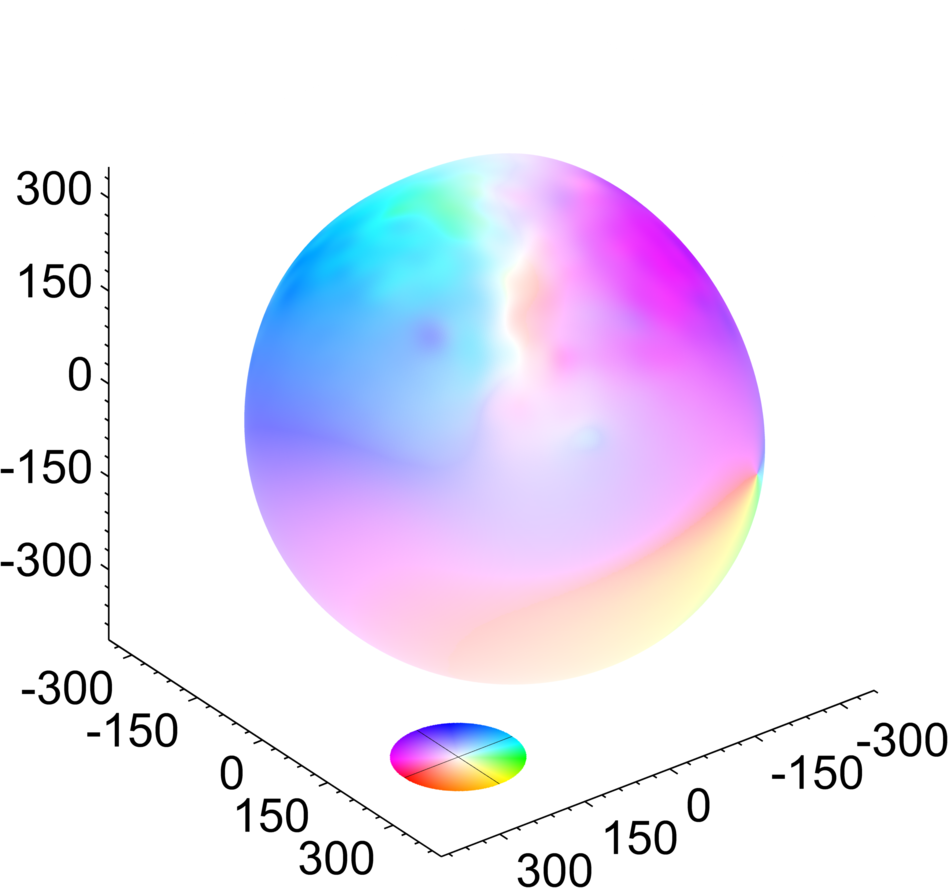}
	\caption{Visualisation of the optical flow field obtained for different values of $\alpha$. The bottom row differs from the top view by a rotation of 180 degrees around the $x_{3}$-axis. From left to right: a) $\alpha = 10^{-2}$, b) $\alpha = 10^{-1}$, c) $\alpha = 1$, and d) $\alpha = 10$.}
	\label{fig:flow3}
\end{figure}

\begin{figure}[t]
	\includegraphics[width=0.49\textwidth]{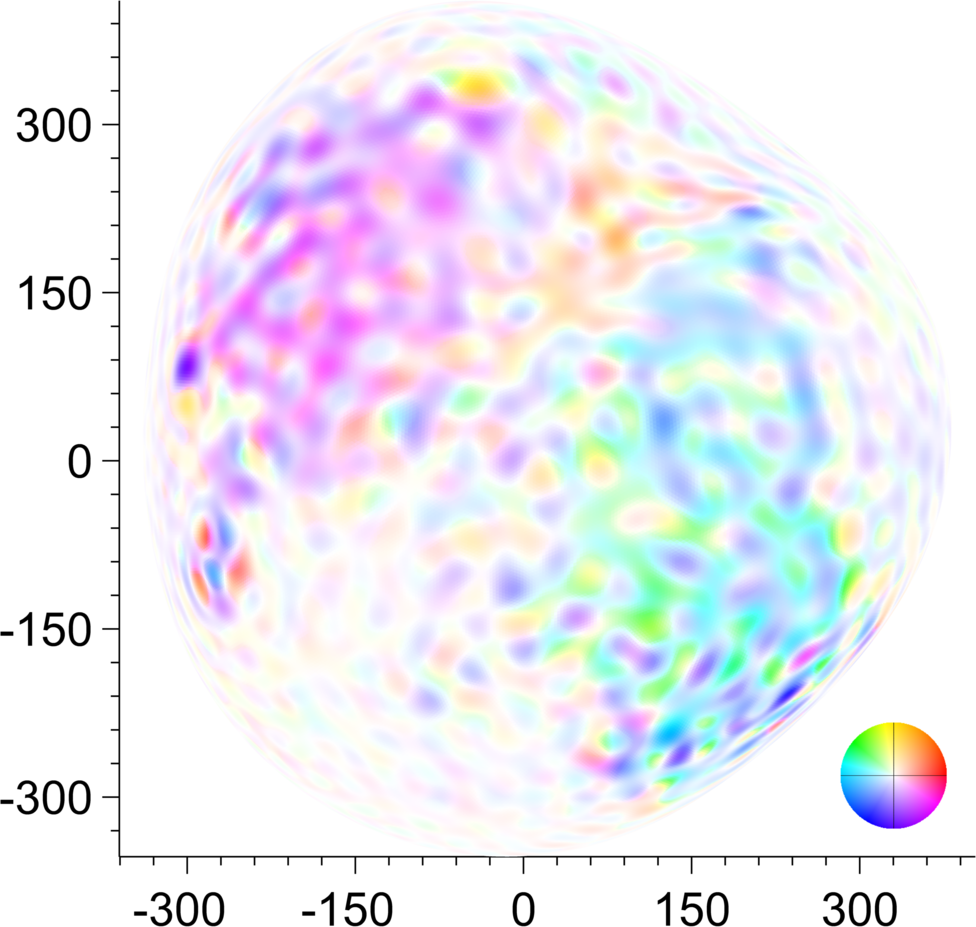} \hfill
	\includegraphics[width=0.49\textwidth]{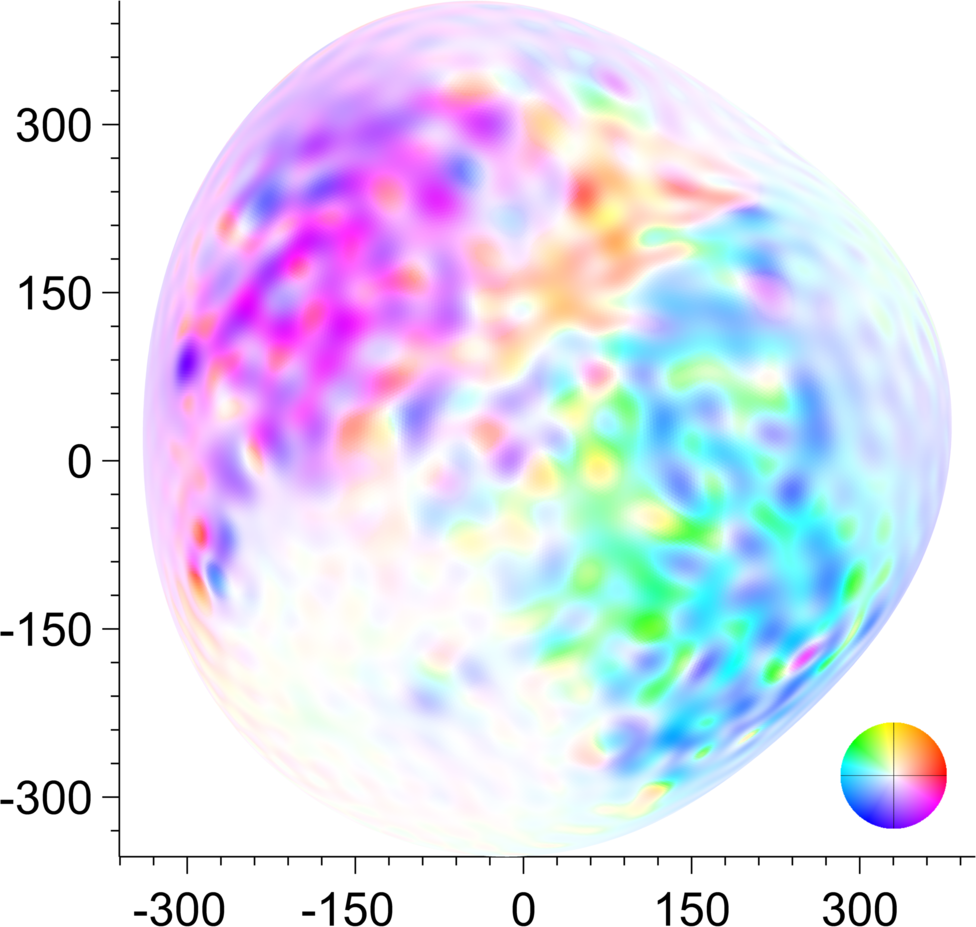} \\
	\includegraphics[width=0.49\textwidth]{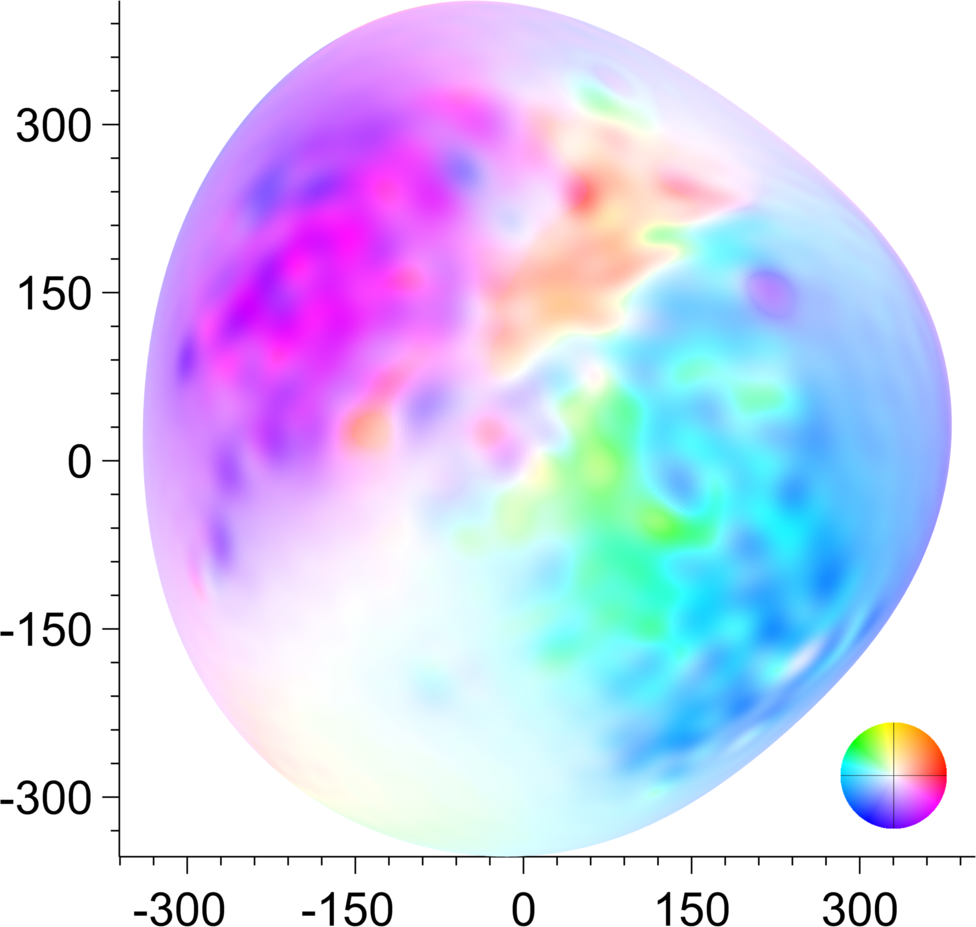} \hfill
	\includegraphics[width=0.49\textwidth]{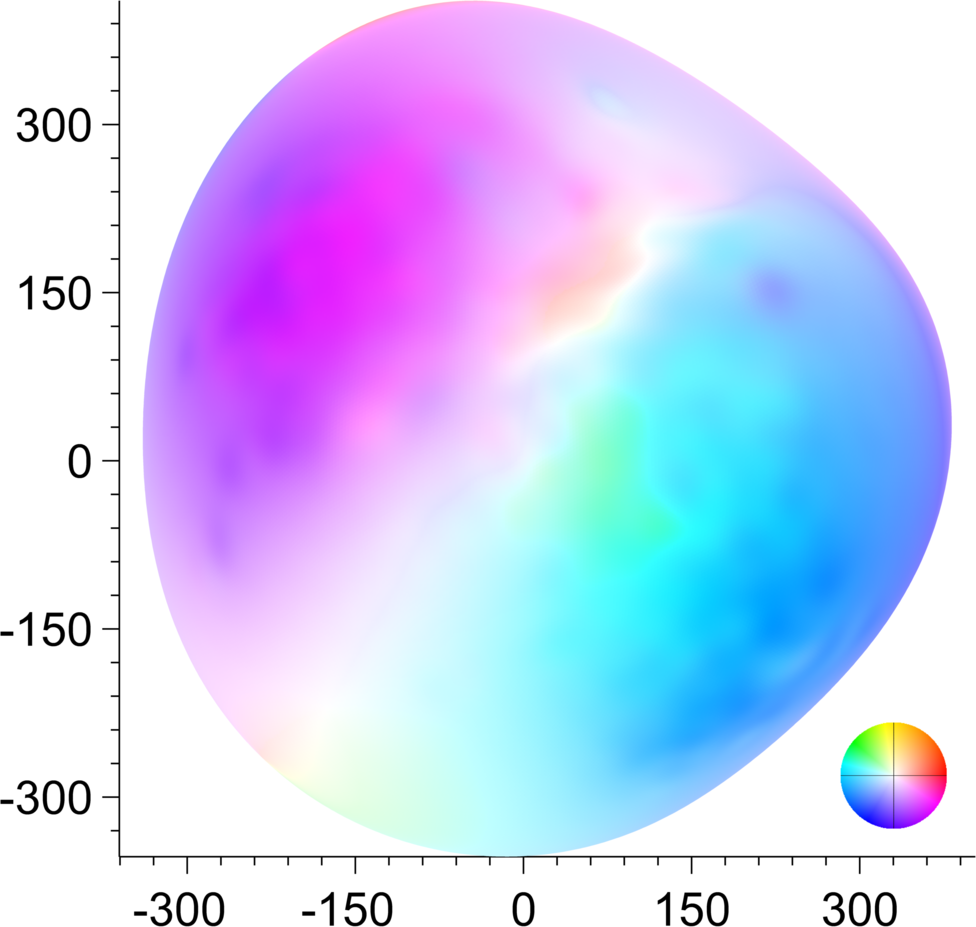}
	\caption{Top view of the optical flow field computed for different values of $\alpha$. From left to right, top to bottom: a) $\alpha = 10^{-2}$, b) $\alpha = 10^{-1}$, c) $\alpha = 1$, and d) $\alpha = 10$.}
	\label{fig:flow2}
\end{figure}
\section{Conclusion}

With the goal of efficient cell motion analysis we considered optical flow on evolving surfaces.
As a prototypical example we restricted ourselves to surfaces parametrised from the round sphere and showed that 4D microscopy data of a living zebrafish embryo can be faithfully represented in this way.
In contrast to previous works, where only a section of the embryo or a spherical approximation was considered, our approach fully attributes the geometry and models the embryo as as closed surface of genus zero.
The resulting energy functional was solved by means of a Galerkin method based on vector spherical harmonics.
Moreover, the parametrisation of the moving sphere-like surface was obtained from the data by solving a surface interpolation problem.
Scalar spherical harmonics expansion allows to easily meet the smoothness requirements of the surface.
Finally, we conducted several experiments based on said microscopy data.
Our results show that cell motion can be indicated reasonably well by the proposed approach.

\paragraph{Acknowledgements}
We thank Pia Aanstad from the University of Innsbruck for sharing her biological insight and for kindly providing the microscopy data.
Moreover, we are grateful to Peter Elbau and Clemens Kirisits for their helpful comments, and to Jos\'{e}~A. Iglesias for carefully proofreading an earlier version of this article and providing valuable feedback.
This work has been supported by the Vienna Graduate School in Computational Science (IK I059-N) funded by the University of Vienna.
In addition, we acknowledge the support by the Austrian Science Fund (FWF) within the national research network ``Geometry + Simulation" (project S11704, Variational Methods for Imaging on Manifolds).

\appendix
\section*{Appendix} \label{sec:appendix}

It remains to give the calculations regarding the piecewise linear approximations of vector spherical harmonics on $\mathcal{S}_{h}^{2}$.
Both equations in Prop.~\ref{def:vspharm} follow directly by expanding the definitions of the tangential vector spherical harmonics~\eqref{eq:fullynormalisedvspharm}.
For the fist identity, that is~\eqref{eq:vspharm1}, we have
\begin{equation*}
	\mathbf{\tilde{y}}_{h}^{(2)} = \lambda_{n}^{-1/2} \nabla_{\mathcal{S}_{h}^{2}} \tilde{Y}_{h} = \lambda_{n}^{-1/2} \sum_{j = 1}^{N_{h}} \tilde{\bar{Y}}(v_{j}) \nabla_{\mathcal{S}_{h}^{2}} \tilde{\varphi}_{j}.
\end{equation*}

The second identity, that is~\eqref{eq:vspharm2}, follows by the fact that
\begin{equation*}
	2 \abs{T_{i}} = \abs{\partial_{1} \x_{i} \times \partial_{2} \x_{i}}, \quad \mathbf{\tilde{N}}_{i} = \frac{\partial_{1} \x_{i} \times \partial_{2} \x_{i}}{\abs{\partial_{1} \x_{i} \times \partial_{2} \x_{i}}},
\end{equation*}
and by application of the vector triple product rule, yielding
\begin{align*}
	\mathbf{\tilde{y}}_{h}^{(3)} & = \lambda_{n}^{-1/2} \nabla_{\mathcal{S}_{h}^{2}} \tilde{Y}_{h} \times \mathbf{\tilde{N}}_{i} \\
	& = \lambda_{n}^{-1/2} \nabla_{\mathcal{S}_{h}^{2}} \tilde{Y}_{h} \times \frac{\partial_{1} \x_{i} \times \partial_{2} \x_{i}}{\abs{\partial_{1} \x_{i} \times \partial_{2} \x_{i}}} \\
	& = \frac{\lambda_{n}^{-1/2}}{2 \abs{T_{i}}} \bigl( (\nabla_{\mathcal{S}_{h}^{2}} \tilde{Y}_{h} \cdot \partial_{2} \x_{i}) \partial_{1} \x_{i} - (\nabla_{\mathcal{S}_{h}^{2}} \tilde{Y}_{h} \cdot \partial_{1} \x_{i}) \partial_{2} \x_{i} \bigr),
\end{align*}
where the last equality results from the definition of the interpolation~\eqref{eq:spharminterp} of $\tilde{Y}_{h}$ and the directional derivative~\eqref{eq:partialf} on the triangular face, that is
\begin{equation*}
	\nabla_{\mathcal{S}_{h}^{2}} \tilde{Y}_{h} \cdot \partial_{k} \x_{i} = \sum_{j = 1}^{N_{h}} \tilde{\bar{Y}}(v_{j}) \partial_{k} \varphi_{j}.
\end{equation*}

\def\cprime{$'$}
  \providecommand{\noopsort}[1]{}\def\ocirc#1{\ifmmode\setbox0=\hbox{$#1$}\dimen0=\ht0
  \advance\dimen0 by1pt\rlap{\hbox to\wd0{\hss\raise\dimen0
  \hbox{\hskip.2em$\scriptscriptstyle\circ$}\hss}}#1\else {\accent"17 #1}\fi}

\end{document}